\theoremstyle{plain} 
\newtheorem{theorem}{Theorem}[section]
\newtheorem{lemma}[theorem]{Lemma}
\newtheorem{proposition}[theorem]{Proposition}
\newtheorem{corollary}[theorem]{Corollary}
\theoremstyle{definition} 
\newtheorem{remark}[theorem]{Remark}
\newtheorem{definition}[theorem]{Definition}
\newtheorem{example}[theorem]{Example}
\newcommand{\R}{\mathbb{R}}
\newcommand{\Z}{\mathbb{Z}}
\newcommand{\Fi}{\mathbb{F}}
\newcommand{\E}{\mathcal{E}}
\newcommand{\Oo}{\mathcal{O}}
\newcommand{\X}{\mathfrak{X}}
\newcommand{\eps}{\varepsilon}
\DeclareMathOperator{\im}{im} 
\DeclareMathOperator{\Ch}{Ch} 
\DeclareMathOperator{\MC}{MC} 
\DeclareMathOperator{\ind}{ind} 
\DeclareMathOperator{\Fix}{Fix} 
\DeclareMathOperator{\Orb}{Orb} 
\title{Canonical chain complexes for Morse-Smale vector fields}
\author{Clemens Bannwart\footnote{University of Modena and Reggio Emilia, Italy,
\texttt{clemens.bannwart@gmail.com}} \and Claudia Landi\footnote{University of Modena and Reggio Emilia, Italy, 
\texttt{claudia.landi@unimore.it}}}
\date{}
\begin{document}

\maketitle

\begin{abstract}
    In 1960, Smale defined a filtration of a closed smooth manifold by the unstable manifolds of fixed points and closed orbits of a Morse-Smale vector field defined on it, and derived generalized Morse inequalities. This suggests that, similarly to the Morse chain complex of a gradient-like vector field, even in the presence of closed orbits, Morse-Smale vector fields admit canonical chain complexes, invariant under topological equivalence, from which one can algebraically derive Morse inequalities. In this paper we show that this is actually the case, improving the state of the art that only offers non-canonical chain complexes.
    Technically, we achieve this result considering the \v{C}ech homology spectral sequence of the unstable manifolds filtration. In particular, we turn bounded exact couples into chain complexes such that the limit page of the spectral sequence associated with an exact couple gives the homology of the chain complex.
    We showcase our construction with examples.
\end{abstract}

\tableofcontents

\section*{Introduction}
\label{sec:intro}
\addcontentsline{toc}{section}{\nameref{sec:intro}}

When studying the topology of vector fields, i.e., those aspects that remain invariant under topological equivalence, algebraic invariants can be used to ignore many of the complexities while still capturing underlying structural behaviour. Thereby, a general theme  is to relate properties of vector fields with topological invariants of the manifold on which they are defined. 

An important family of relatively simple vector fields are the  Morse-Smale vector fields, which are well-behaved in terms of their fixed points, closed orbits, and connecting orbits, making them structurally stable. In two dimensions, structural stability is even equivalent to being Morse-Smale and a generic condition. Nevertheless, even for such vector fields, the program of assigning algebraic invariants to them is not yet fully understood, motivating us to address this gap.

We start from known results hinting at the existence of further structures and invariants that can be discovered. In the 1960s, Smale generalized the Morse inequalities from Morse functions to Morse-Smale vector fields \cite{Smale1960MorseIineq}. These inequalities relate the number of fixed points and closed orbits of each index with the Betti numbers of the underlying manifold. Moreover, considering the gradient-like case, i.e. those vector fields that have no closed orbits or otherwise recurrent behaviour, the Morse inequalities can  be derived algebraically from the existence of  a chain complex, variously ascribed in the literature to Morse, Smale, Thom, Witten, and Floer (here just called {\em Morse complex} for short).  The Morse complex, whose generators are given by the fixed points of the vector field, and whose  differential is defined by appropriately counting flow lines, has homology  isomorphic to the singular homology of the underlying manifold \cite{BanyagaLecturesOnMorseHomology}. 

Inspired by these results, the existence of a chain complex with properties similar to the Morse complex but applicable also in the presence of closed orbits has been variously investigated. One such complex has been proposed for Morse-Smale vector fields in \cite{EidiJost2022}, defining a chain complex in terms of the fixed points, closed orbits and connecting orbits. 
Some issues with this approach have been highlighted in \cite{AboutNonUniqueness2024}, which have later been addressed \cite{EidiJost2024arxiv}, although at the cost of losing the canonical nature of the chain complex, in the sense that some arbitrary choices may have to be made before applying the proposed construction.
Another popular approach, whose generality is even greater than Morse-Smale vector fields, is the connection matrix, first introduced by Franzosa \cite{FranzosaConnectionMatrix1989}. A problem with this is again the non-uniqueness in the presence of closed orbits, as explained in \cite{ReineckConnectionMatrix1990}.

Our contribution with this paper is a new method for assigning a chain complex to a Morse-Smale vector field on a closed smooth manifold that, unlike other chain complexes of Morse-Smale vector fields proposed in the literature, simultaneously enjoys the following properties: its homology gives the homology of the underlying manifold; it is invariant under topological equivalence of vector fields; and it is canonical in the sense that it does not require arbitrary choices. 

Technically, to construct this chain complex given a Morse-Smale vector field, we start with a filtration of the underlying manifold proposed by Smale in \cite{Smale1960MorseIineq}. In this filtration, the unstable manifolds of fixed points and closed orbits get added iteratively, depending on the existence of connecting orbits between each other. We then consider the \v{C}ech homology spectral sequence associated with this filtration, whose first page can be understood in terms of the fixed points and closed orbits of the vector field. We propose a method to turn any spectral sequence coming from a bounded exact couple into a chain complex. Applying this to the spectral sequence at hand yields our chain complex. Finally, we show how to derive the generalized Morse inequalities from this chain complex and explicitly compute the chain complex in two examples of vector fields on the 2-sphere.

\section{The setting}

\subsection{Stable and unstable manifolds}\label{sec:dyn-sys}

We introduce some concepts from dynamical systems that we are going to use, see e.g. \cite{palis2012geometric} for more details. Given a closed smooth manifold $M$, we denote by $\X(M)$ the set of smooth vector fields on $M$. By $\X^1(M)$ we denote the topological space with underlying set $\X(M)$ endowed with the Whitney $C^1$-topology.

In this subsection we fix a vector field $v\in \X(M)$ and consider the dynamics of the flow generated by $v$. We define everything with a subscript $v$, but this will be dropped when it is clear from the context which vector field we are considering.
We write $\phi_v\colon \R \times M \to M$ for the corresponding flow. 
The flow satisfies 
\[
\phi_v(s,\phi_v(t,p)) = \phi_v(s+t,p),
\qquad \quad
\phi_v(0,p)=p
\qquad \text{and} \qquad 
\left.\frac{d}{dt}\right\vert_{t=0} \phi_v(t,p) = v(p) 
\]
for all $p \in M$ and is uniquely determined by these properties. For a fixed number $t \in \R$, we sometimes write $\phi_v^t\colon M \to M$ for the diffeomorphism defined by $\phi_v^t(p) := \phi_v(t,p)$.

Given any point $p\in M$, the \textbf{orbit} of $p$ (w.r.t. $v$) is the set
\[
\Oo_v(p) := \phi_v(\R,p) = \{ \phi_v(t,p) \mid t \in \R \}.
\]
A \textbf{fixed point} (or \textbf{singular point}) of $v$ is a point $p\in M$ such that $\phi_v(t,p)=p$ for all $t\in \R$, i.e. $\Oo_v(p)=\{p\}$. This is equivalent to $v(p)=0$. A \textbf{periodic point} of $v$ is a point $p \in M$ which is not a fixed point such that there exists $T>0$ with $\phi_v(T,p)=p$. The smallest such $T$ is called the \textbf{period} of $p$. The orbit of a periodic point is called a \textbf{periodic orbit} (or \textbf{closed orbit}). One can check that all points on the same closed orbit have the same period.

\begin{definition}
    The \textbf{$\alpha$-limit set} and the \textbf{$\omega$-limit set} of $p$ (w.r.t. $v$) are defined as
    \begin{align*}
        \alpha_v(p) &:= \bigcap_{-\infty < t \le 0} \overline{\phi_v((-\infty,t],p)} = \{q \in M \mid \exists \text{ sequence } t_i \to -\infty \text{ with } \phi_v(t_i,p) \to q \}, \\
        \omega_v(p) &:= \bigcap_{0 \le t < \infty} \overline{\phi_v([t,\infty),p)} = \{q \in M \mid \exists \text{ sequence } t_i \to \infty \text{ with } \phi_v(t_i,p) \to q \}.
    \end{align*} 
\end{definition}

A subset $A\subseteq M$ is called \textbf{$\phi_v$-invariant} if $\phi_v(t,A)=A$ for all $t\in \R$. The orbit of any point $p\in M$ is a $\phi_v$-invariant subset of $M$.

\begin{proposition}[Proposition 1.4 of \cite{palis2012geometric}]
    Let $p\in M$. Then $\alpha_v(p)$ and $\omega_v(p)$ are non-empty, closed, connected and $\phi_v$-invariant subsets of $M$.
\end{proposition}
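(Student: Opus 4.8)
The plan is to establish all four properties for $\omega_v(p)$ and then deduce the corresponding statements for $\alpha_v(p)$ by symmetry, namely by applying the same arguments to the reversed vector field $-v$, whose flow is $\phi_{-v}^t = \phi_v^{-t}$ and which satisfies $\alpha_{-v}(p) = \omega_v(p)$. Three of the properties are quick consequences of compactness of $M$ and continuity of the flow. For \emph{closedness}: $\omega_v(p) = \bigcap_{0 \le t < \infty} \overline{\phi_v([t,\infty),p)}$ is an intersection of closed sets, hence closed, and being a closed subset of the compact manifold $M$ it is compact. For \emph{non-emptiness}: setting $K_t := \overline{\phi_v([t,\infty),p)}$, one checks $K_t \subseteq K_s$ whenever $t \ge s$, so the $K_t$ form a nested family of non-empty closed subsets of the compact space $M$; every finite subfamily has non-empty intersection, so by the finite intersection property $\omega_v(p) = \bigcap_t K_t \neq \emptyset$.

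For \emph{invariance} I would use the sequential description of $\omega_v(p)$ recalled in the definition. Given $q \in \omega_v(p)$, choose $t_i \to \infty$ with $\phi_v(t_i,p) \to q$. For any fixed $s \in \R$, the flow identity and continuity of $\phi_v^s$ give $\phi_v(s+t_i,p) = \phi_v^s(\phi_v(t_i,p)) \to \phi_v^s(q)$, and since $s + t_i \to \infty$ this exhibits $\phi_v(s,q) \in \omega_v(p)$. Thus $\phi_v(s,\omega_v(p)) \subseteq \omega_v(p)$ for every $s$, and applying this with $-s$ in place of $s$ yields the reverse inclusion, so $\phi_v(s,\omega_v(p)) = \omega_v(p)$; i.e. $\omega_v(p)$ is $\phi_v$-invariant.

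The main obstacle is \emph{connectedness}, which I would prove by contradiction. Suppose $\omega_v(p) = A \sqcup B$ with $A,B$ non-empty, closed and disjoint; since $M$ is compact, $A$ and $B$ are compact, so $\delta := \mathrm{dist}(A,B) > 0$. Let $U$ and $V$ be the open $\delta/3$-neighbourhoods of $A$ and $B$; they are disjoint, and $C := M \setminus (U \cup V)$ is compact with $C \cap \omega_v(p) = \emptyset$. Because $A \subseteq \omega_v(p)$ and $B \subseteq \omega_v(p)$, the orbit ray $\{\phi_v(t,p) : t \ge 0\}$ meets both $U$ and $V$ for arbitrarily large times; by continuity of $t \mapsto \phi_v(t,p)$ on the connected set $[0,\infty)$, between a visit to $U$ and a later visit to $V$ there must be a time at which the orbit lies in $C$. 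This produces a sequence $s_i \to \infty$ with $\phi_v(s_i,p) \in C$; by compactness of $C$ some subsequence converges to a point $q \in C$, and then $q \in \omega_v(p)$ by definition, contradicting $C \cap \omega_v(p) = \emptyset$. Hence $\omega_v(p)$ is connected. The only delicate point in this argument is verifying that the orbit genuinely oscillates between the two candidate pieces infinitely often, which is exactly what membership of both $A$ and $B$ in the $\omega$-limit set guarantees; the rest is routine.
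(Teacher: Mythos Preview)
Your argument is correct and is the standard textbook proof of this classical fact. Note, however, that the paper does not supply its own proof of this proposition: it is simply quoted as Proposition~1.4 of \cite{palis2012geometric} and left without proof, so there is nothing in the paper to compare your approach against. Your treatment---closedness from the intersection description, non-emptiness via the finite intersection property in the compact manifold $M$, invariance from the sequential characterisation together with continuity of $\phi_v^s$, and connectedness by the oscillation/contradiction argument---is precisely the argument one finds in Palis--de Melo and other standard references.
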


\begin{definition}
    A point $p\in M$ is called \textbf{chain-recurrent} (w.r.t. $v$) if for all $T>0$ and $\eps>0$ there exist $x_0,x_1,\ldots,x_m \in M$, with $x_0=x_m=p$, such that for all $i=0,\ldots,m-1$ there exists $t_i>T$ with $d_M(\phi(t_i,x_i),x_{i+1}) \le \eps$.
\end{definition}

\begin{remark}
    If $p\in M$ is either a fixed or periodic point, then $p$ is chain-recurrent.
\end{remark}

\begin{definition}
    Given a fixed point $p$ of $v$,  its \textbf{stable manifold} and its \textbf{unstable manifold} are defined, respectively, by
    \begin{align*}
        W^s_v(p) &:= \{q \in M \mid \phi_v(t,q) \to p \text{ as } t \to \infty \}, \\
        W^u_v(p) &:= \{q \in M \mid \phi_v(t,q) \to p \text{ as } t \to -\infty \}.
    \end{align*}
    Given a closed orbit $\gamma$ of $v$, we define its \textbf{stable manifold} and its \textbf{unstable manifold}, respectively,  by
    \begin{align*}
        W^s_v(\gamma) &:= \{q \in M \mid \exists p \in \gamma \text{ such that } d_M(\phi_v(t,p),\phi_v(t,q)) \to 0 \text{ as } t \to \infty \}, \\
        W^u_v(\gamma) &:= \{q \in M \mid \exists p \in \gamma \text{ such that } d_M(\phi_v(t,p),\phi_v(t,q)) \to 0 \text{ as } t \to -\infty \}.
    \end{align*} 
\end{definition}

\begin{definition}
    A set $\Lambda \subseteq M$ is called \textbf{hyperbolic} (w.r.t. $v$) if there exist two subbundles $E^s,E^u \subseteq T_\Lambda M$ such that 
    \begin{itemize}
        \item $T_pM = \langle v(p) \rangle \oplus E^s_p \oplus E^u_p$ for all $p \in \Lambda$,
        \item there exist $0< \lambda < 1$ and $C>0$ such that for all $p\in \Lambda$ and $t\ge 0$ we have
        \[
        |D\phi^t_v(p) x| \le C \lambda^t |x|
        \qquad \text{and} \qquad
        |D\phi^{-t}_v(p)y| \le C \lambda^t |y|
        \]
        where $x \in E^s_p$ and $y \in E^u_p$.
    \end{itemize}
    The two subbundles $E^s$ and $E^u$ are uniquely determined. The dimension of $E^u$ is called the \textbf{index} of $\Lambda$ and denoted by $\ind_v(\Lambda)$.
\end{definition}

In this paper we only consider the case where $\Lambda$ is either a fixed point or a closed orbit of $v$. The next two results were given by Smale in \cite{Smale_DiffDynSyst67} and the references therein. For a comprehensive treatment see \cite{FisherHassel2019HyperbolicFlows}.

\begin{theorem}
    The stable and unstable manifolds of hyperbolic fixed points and hyperbolic closed orbits are injectively immersed submanifolds of $M$.
\end{theorem}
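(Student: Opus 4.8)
The plan is to reduce the statement to the local stable/unstable manifold theorem (Hadamard--Perron) and then to globalize it by flowing, treating the fixed point case first and deducing the closed orbit case from it via a Poincar\'e return map.

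So let $p$ be a hyperbolic fixed point of $v$; since $v(p)=0$, hyperbolicity here means $T_pM=E^s_p\oplus E^u_p$ with the stated exponential estimates, so $p$ is a hyperbolic fixed point in the classical sense. First I would invoke the local stable manifold theorem to get a neighborhood $U$ of $p$ and embedded submanifolds $W^s_{\mathrm{loc}}(p),W^u_{\mathrm{loc}}(p)\subseteq U$, tangent at $p$ to $E^s_p$ and $E^u_p$, which may be chosen forward- resp.\ backward-invariant (i.e.\ $\phi^t_v(W^s_{\mathrm{loc}}(p))\subseteq W^s_{\mathrm{loc}}(p)$ and $\phi^{-t}_v(W^u_{\mathrm{loc}}(p))\subseteq W^u_{\mathrm{loc}}(p)$ for $t\ge 0$) and such that $W^s_{\mathrm{loc}}(p)=\{q\in U\mid \phi^t_v(q)\in U\ \forall t\ge 0\}$, and symmetrically for $W^u_{\mathrm{loc}}(p)$. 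Then I would verify the identity $W^s_v(p)=\bigcup_{n\in\N}\phi^{-n}_v(W^s_{\mathrm{loc}}(p))$: the inclusion $\supseteq$ is clear, and for $\subseteq$ one uses that $\phi^t_v(q)\to p$ forces $\phi^t_v(q)\in U$ for all large $t$, hence $\phi^n_v(q)\in W^s_{\mathrm{loc}}(p)$ for some $n\in\N$. By forward invariance, applying the diffeomorphism $\phi^{n+1}_v$ shows that $\phi^{-n}_v(W^s_{\mathrm{loc}}(p))\subseteq\phi^{-(n+1)}_v(W^s_{\mathrm{loc}}(p))$, and since all these embedded submanifolds have dimension $\dim E^s_p$, each such inclusion is an open embedding (invariance of domain). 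Hence $W^s_v(p)$ is an ascending union of embedded submanifolds along open embeddings; the resulting direct-limit smooth structure makes the inclusion $W^s_v(p)\hookrightarrow M$ an injective immersion (injective tautologically, an immersion because it is an embedding on each term of the union). The case of $W^u_v(p)$ is identical with time reversed, using $W^u_v(p)=\bigcup_{n\in\N}\phi^{n}_v(W^u_{\mathrm{loc}}(p))$.

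For a hyperbolic closed orbit $\gamma$ of period $T$, I would fix $p\in\gamma$ and an embedded disk $\Sigma\ni p$ of dimension $\dim M-1$ transverse to $v$. The Poincar\'e first-return map $P$ is then defined and smooth near $p$ in $\Sigma$ with $P(p)=p$, and hyperbolicity of $\gamma$ makes $p$ a hyperbolic fixed point of $P$: the splitting $T_pM=\langle v(p)\rangle\oplus E^s_p\oplus E^u_p$ restricts to $T_p\Sigma=E^s_p\oplus E^u_p$, on which $DP(p)$ acts as the induced return of $D\phi^T_v(p)$. Applying the fixed point case to the diffeomorphism $P$ gives embedded local stable and unstable manifolds of $p$ inside $\Sigma$; flowing each of them out over a slightly enlarged period interval, i.e.\ taking images under $(s,x)\mapsto\phi^s_v(x)$, produces --- by transversality of $\Sigma$ to $v$ and after shrinking --- embedded local (un)stable manifolds $W^s_{\mathrm{loc}}(\gamma)$, $W^u_{\mathrm{loc}}(\gamma)$ of $\gamma$ in $M$ of one higher dimension. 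One then has, exactly as before, $W^s_v(\gamma)=\bigcup_{n\in\N}\phi^{-n}_v(W^s_{\mathrm{loc}}(\gamma))$ and $W^u_v(\gamma)=\bigcup_{n\in\N}\phi^{n}_v(W^u_{\mathrm{loc}}(\gamma))$ as ascending unions of embedded submanifolds along open embeddings, so both are injectively immersed submanifolds of $M$.

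The deep analytic input --- existence, smoothness, invariance, and the dynamical characterization of the local (un)stable manifolds --- is precisely the content of the cited Hadamard--Perron theorem and would not be reproved here; the work in the argument above is organizational. I expect the main points requiring care to be: (i) checking that the flow-out of the sectional local (un)stable manifolds is genuinely \emph{embedded} in $M$ and that it agrees with the local version of the paper's definition of $W^s_v(\gamma),W^u_v(\gamma)$ (this last comparison is, in effect, the asymptotic phase property of hyperbolic closed orbits); and (ii) the bookkeeping that an ascending union of embedded submanifolds of constant dimension along open inclusions carries a canonical smooth structure making the total inclusion an injective immersion, which is where forward/backward invariance of the local manifolds and constancy of the dimension are used.
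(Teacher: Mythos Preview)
The paper does not actually prove this theorem: it is stated without proof and attributed to Smale (with a pointer to Fisher--Hasselblatt for a modern treatment). So there is no ``paper's own proof'' to compare against; the authors are simply invoking a classical result.

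That said, your sketch is the standard argument one finds in those references and is essentially correct. The reduction to the local Hadamard--Perron theorem followed by globalization via the ascending union $W^s_v(p)=\bigcup_n\phi^{-n}_v(W^s_{\mathrm{loc}}(p))$ is exactly how this is done, and your treatment of the closed orbit case via the Poincar\'e return map is the right strategy. You have also correctly flagged the two delicate points: that the flow-out of the sectional local manifolds is embedded and matches the asymptotic-phase definition, and that the ascending union inherits a smooth structure making the inclusion an injective immersion. Nothing is wrong here; you have reconstructed the outline of the proof the paper is citing.
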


More precisely, Smale proved the following result. 

\begin{lemma}[Lemma 3.8 in \cite{Smale1960MorseIineq}]\label{lem:unstable-manifold-homeomorphic}
    Let $v$ be a Morse-Smale vector field. If $\beta$ is a fixed point of $v$ of index $k\ge 0$, then $W^u(\beta) \cong \R^k$. If $\beta$ is a closed orbit of $v$ of index $k\ge 0$, then $W^u(\beta) \cong \R^k \times S^1$. 
\end{lemma}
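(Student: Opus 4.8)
The plan is to reduce both statements to the standard fact that the (un)stable manifold of a hyperbolic fixed point (resp. closed orbit) is the image of an injective immersion of $\R^k$ (resp.\ of $\R^k$, with the flow giving an extra circle direction), and then upgrade "injective immersion" to "diffeomorphism onto its image" using global dynamical information that is available precisely because $v$ is Morse-Smale. For the fixed point case, start from the local stable manifold theorem: near a hyperbolic fixed point $\beta$ of index $k$, the local unstable manifold $W^u_{\mathrm{loc}}(\beta)$ is an embedded $k$-disk, and $W^u(\beta)=\bigcup_{t\ge 0}\phi^t(W^u_{\mathrm{loc}}(\beta))$. This exhibits $W^u(\beta)$ as the increasing union of the images of embedded disks $\phi^t(W^u_{\mathrm{loc}}(\beta))$, and one gets a natural smooth bijective immersion $E^u_\beta\to W^u(\beta)$ where $E^u_\beta\cong\R^k$ is the unstable tangent space, by flowing from the local model. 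The content is that this immersion is actually an embedding; equivalently, that $W^u(\beta)$ carries the topology of $\R^k$, i.e.\ that the immersed submanifold has no "accumulation onto itself" pathologies.

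The key input making this work is Morse-Smale transversality together with compactness of $M$. I would argue as follows: the closure $\overline{W^u(\beta)}$ is a union of $W^u(\beta)$ together with unstable manifolds of fixed points and closed orbits of strictly smaller index (this is exactly the content of the structure of Morse-Smale flows that underlies Smale's filtration — the $\lambda$-lemma / inclination lemma shows that limits of points in $W^u(\beta)$ lie on $W^u$ of the $\omega$-limit sets reached, and the no-cycle property of Morse-Smale flows prevents the index from coming back up). Hence $W^u(\beta)$ is open in its closure, so it is locally compact, and the bijective immersion $\R^k\cong E^u_\beta \to W^u(\beta)$ from a locally compact space that is also a continuous open map onto a locally compact Hausdorff space is a homeomorphism; being also a smooth immersion of full rank, it is a diffeomorphism. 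For the closed orbit case $\gamma$ of index $k$, the same scheme applies with the local model replaced by the one coming from the Poincaré return map: a hyperbolic closed orbit of index $k$ has a local unstable manifold that is an embedded $\R^k\times S^1$ (a disk bundle over $\gamma\cong S^1$, which is trivial since the only obstruction is orientability of $E^u$ and Smale's setup / a double cover argument handles the non-orientable case, or one simply notes $W^u(\gamma)\cong$ total space of the $\R^k$-bundle which is $\R^k\times S^1$ up to diffeomorphism regardless of orientability when $k\ge 1$, and for $k=0$ it is literally $\gamma$), and $W^u(\gamma)=\bigcup_{t\ge 0}\phi^t(W^u_{\mathrm{loc}}(\gamma))$ again produces a bijective immersion from $\R^k\times S^1$ whose image is open in its closure by the same no-cycle argument.

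The main obstacle I expect is making rigorous the claim that the bijective immersion is a homeomorphism — that is, ruling out that $W^u(\beta)$, though abstractly a manifold diffeomorphic to $\R^k$ via the flow parametrization, sits inside $M$ with a strictly finer intrinsic topology accumulating on lower-index pieces in a way that is not "locally closed." This is precisely where one must invoke that the flow is Morse-Smale rather than merely having hyperbolic critical elements: the $\lambda$-lemma guarantees that near any point $q\in\overline{W^u(\beta)}\setminus W^u(\beta)$, the set $W^u(\beta)$ approaches $W^u(\mathrm{stuff through }q)$ in $C^1$, so that $W^u(\beta)$ does not re-accumulate on itself, giving the needed local closedness. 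A secondary, more bookkeeping-level obstacle is the closed-orbit case's bundle triviality: one should either cite that an $\R^k$-bundle over $S^1$ has total space diffeomorphic to $\R^k\times S^1$ (true since such bundles are classified by $\pi_0(GL_k(\R))=\Z/2$ and the twisted bundle still has total space the open Möbius-type manifold which is diffeomorphic to $\R^k\times S^1$ for $k\ge 2$, while $k=1$ gives the open Möbius band — so strictly one may need orientability of $E^u$, which Smale's hypotheses or a passage to a double cover provide), or follow Smale's original argument in \cite{Smale1960MorseIineq} directly. Since the statement is quoted as Lemma 3.8 of \cite{Smale1960MorseIineq}, I would present the above as the proof sketch and defer the delicate inclination-lemma estimates and the bundle discussion to that reference.
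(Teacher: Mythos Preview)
The paper does not prove this lemma; it is stated without proof and attributed directly to Smale as Lemma 3.8 of \cite{Smale1960MorseIineq}. So there is nothing in the paper to compare your argument against beyond the citation itself, and your sketch already supplies far more detail than the paper does.

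Your outline for the fixed-point case is essentially the standard one and is sound: local unstable manifold theorem plus flowing forward gives a bijective immersion $\R^k\to W^u(\beta)$, and the Morse--Smale structure (no cycles, $\lambda$-lemma) guarantees $W^u(\beta)$ is locally closed in $M$, which upgrades the immersion to an embedding.

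There is, however, a genuine error in your discussion of the closed-orbit case. You assert that the total space of the twisted $\R^k$-bundle over $S^1$ is diffeomorphic to $\R^k\times S^1$ ``regardless of orientability when $k\ge 1$'' (and then partially retract this for $k=1$). This is false for every $k\ge 1$: the nontrivial bundle has non-orientable total space (for $k\ge 2$ one can split off a trivial $\R^{k-1}$ factor, reducing to the open M\"obius band times $\R^{k-1}$, which is still non-orientable), whereas $\R^k\times S^1$ is orientable. So the two are never diffeomorphic. The statement $W^u(\gamma)\cong \R^k\times S^1$ therefore genuinely requires the unstable bundle $E^u|_\gamma$ to be orientable (what Smale calls the \emph{untwisted} case). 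In Smale's paper this hypothesis is present, and the twisted case is handled separately; your sketch should make that dependence explicit rather than claim it is automatic.
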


\subsection{Morse-Smale vector fields}

\begin{definition}
    Two (injectively immersed) submanifolds $K,N \subseteq M$ are said to \textbf{intersect transversally}, if for every point $p\in K \cap N$ we have $T_pM = T_pK + T_p N$. The symbolic notation for this is $K \pitchfork N$.
\end{definition}

\begin{remark}
    If we have two submanifolds $K,N$ of $M$ such that $\dim(K)+\dim(N) < \dim(M)$, then $K$ and $N$ intersect transversally if and only if $K\cap N = \emptyset$. 
\end{remark}

\begin{definition}
    A vector field $v \in \X(M)$ is called a \textbf{Morse-Smale vector field}, if it satisfies the following conditions:
    \begin{enumerate}[(i)]
        \item The set of chain-recurrent points consists of finitely many fixed points and closed orbits, all of which are hyperbolic.
        \item The stable and unstable manifolds of fixed points and closed orbits intersect transversally.
    \end{enumerate}
    We call a Morse-Smale vector field \textbf{gradient-like} if it has no closed orbits.
\end{definition}

\begin{remark}
    Actually, hyperbolic fixed points and hyperbolic orbits are isolated, therefore on a closed manifold, by compactness we can have only finitely many. We could thus drop the condition that there are only finitely many fixed points and closed orbits. Also, some authors use non-wandering instead of chain-recurrent points in the definition of a Morse-Smale vector field. The two definitions are equivalent, as explained in \cite[Remark 5.3.47]{FisherHassel2019HyperbolicFlows}.
\end{remark}

The manifold decomposes into the unstable manifolds of fixed points and closed orbits. A proof of this result can be found in \cite{Smale_DiffDynSyst67} and references therein. For a more modern reference of this and some other relevant results about unstable manifolds, see \cite{DangRiv2020_SpectralAnalysisMSflows1}.

\begin{proposition}\label{prop:decomposition-unstable-manifolds}
    $M = \bigsqcup_\beta W^u(\beta)$, where the $\beta$ range over all fixed points and closed orbits.
\end{proposition}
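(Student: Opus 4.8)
The plan is to associate to each point of $M$ its unstable manifold via the $\alpha$-limit set, and to show that this assignment is well defined and exhaustive. First I would use the standard fact that, for any $p\in M$, every point of $\alpha_v(p)$ is chain-recurrent with respect to $v$; this is an internal-chain-transitivity statement that follows by a compactness-and-continuity argument, concatenating short pieces of the backward trajectory of $p$ which accumulate on $\alpha_v(p)$ into admissible $\eps$-chains. Together with condition (i) in the definition of a Morse-Smale vector field, this forces $\alpha_v(p)\subseteq \bigcup_\beta \beta$, where the union runs over the finitely many fixed points and closed orbits of $v$.

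Next I would localize $\alpha_v(p)$ to a single $\beta$. The sets $\beta$ are finitely many, pairwise disjoint and compact (points and circles), hence at positive pairwise distance and separable by disjoint open neighbourhoods; since $\alpha_v(p)$ is non-empty, compact and connected by the proposition quoted above, it is contained in exactly one such $\beta$. If that $\beta$ is a fixed point, then $\alpha_v(p)=\{\beta\}$; if $\beta$ is a closed orbit, then, as $\alpha_v(p)$ is non-empty and $\phi_v$-invariant, it contains the full orbit of any of its points, so $\alpha_v(p)=\beta$. Thus each $p$ determines a unique $\beta=\beta(p)$ with $\alpha_v(p)=\beta$.

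The third step upgrades the equality $\alpha_v(p)=\beta$ to membership $p\in W^u(\beta)$. For a fixed point $\beta$ this is immediate: any backward time sequence has, by compactness of $M$, a subsequence along which $\phi_v(t,p)$ converges to a point of $\alpha_v(p)=\{\beta\}$, so $\phi_v(t,p)\to\beta$ as $t\to-\infty$. For a hyperbolic closed orbit $\beta$, the same separation argument shows the backward trajectory of $p$ eventually stays inside any prescribed tubular neighbourhood of $\beta$; the (un)stable manifold theorem for hyperbolic closed orbits, already invoked in the excerpt, then places $p$ in the local, hence the global, unstable manifold of $\beta$, which is precisely the set defined as $W^u(\beta)$.

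Finally, for disjointness of the union, suppose $p\in W^u(\beta_1)\cap W^u(\beta_2)$. Unwinding the definitions, the backward trajectory of $p$ shadows a trajectory in $\beta_1$ and a trajectory in $\beta_2$; since each $\beta_i$ is compact and $\phi_v$-invariant, every $\alpha$-limit point of $p$ lies in $\beta_i$, so $\alpha_v(p)\subseteq\beta_1$ and $\alpha_v(p)\subseteq\beta_2$, and non-emptiness of $\alpha_v(p)$ forces $\beta_1=\beta_2$ by the disjointness of distinct fixed points and closed orbits. The step I expect to be most delicate is the first one: one must invoke chain-recurrence rather than the weaker notion of non-wandering points (limit sets need not consist of non-wandering points, as a homoclinic loop shows), and it is exactly the Morse-Smale hypothesis that excludes such recurrent connecting behaviour and makes the argument close.
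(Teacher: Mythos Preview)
The paper does not prove this proposition; it states the result and refers to \cite{Smale_DiffDynSyst67} and \cite{DangRiv2020_SpectralAnalysisMSflows1} for proofs. So there is no ``paper's own proof'' to compare against, and your proposal should be judged on its own.

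Your outline is the standard argument and is correct. The key steps --- that $\alpha_v(p)$ is a non-empty, compact, connected, invariant set consisting of chain-recurrent points; that it therefore coincides with a single critical element $\beta$; that $\alpha_v(p)=\beta$ forces the backward orbit of $p$ to eventually remain in any neighbourhood of $\beta$, whence the local (un)stable manifold theorem for hyperbolic fixed points and periodic orbits places $p$ in $W^u(\beta)$; and that $p\in W^u(\beta)$ conversely pins down $\alpha_v(p)=\beta$, giving disjointness --- are all sound.

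One correction to your closing commentary: it is a classical fact that $\alpha$- and $\omega$-limit sets are always contained in the non-wandering set (if $q\in\alpha_v(p)$, pick $t_i\to-\infty$ with $\phi(t_i,p)\to q$; for any neighbourhood $U$ of $q$ two consecutive times $t_{i+1}<t_i$ give $\phi^{t_i-t_{i+1}}(U)\cap U\neq\emptyset$). So the non-wandering formulation of the Morse--Smale condition would serve just as well here, and your worry about needing chain-recurrence specifically is unfounded. The homoclinic-loop remark does not illustrate what you claim: points on a homoclinic orbit may or may not be non-wandering, but their $\alpha$-limit (the saddle) certainly is. This misstep is only in the commentary and does not affect the validity of your proof.
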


\subsection{The unstable manifolds filtration of a Morse-Smale vector field}\label{sec:filtration-induced-by-vector-field}

We recall a way to filter the underlying manifold of a Morse-Smale vector field with respect to unstable manifolds introduced by Smale in \cite{Smale1960MorseIineq}. In the gradient-like case, one can consider an easier filtration, adding the unstable manifolds according to their dimension, i.e. adding the $k$-dimensional ones at time $k$. This is also called the {\em index filtration} and is equivalent to considering the $k$-skeleta of the corresponding CW decomposition of the manifold. In the presence of periodic orbits, one needs to be more careful, since the stable and unstable manifolds of periodic orbits of the same index may intersect. This means that if we simply filter according to dimensions, we do not get a CW filtration. The idea of Smale's filtration, later called the {\em unstable manifolds filtration}, is thus to define it recursively, adding in each step all unstable manifolds that attach to what has already been added before. Another approach that is often seen in the literature is to pick an {\em energy function} for the vector field as defined in \cite{MeyerEnergyFunctions68} and consider the sublevelset filtration with respect to that function.  However, such energy functions are not determined uniquely by the vector field and can lead to different filtrations. Choosing self-indexing functions in the presence of closed orbits can cause the same problems as with the index filtration.
The unstable manifolds filtration is a particular case of an attractor filtration as defined in \cite{Moeckel1988MorseDecompConnectionMat}, which is a filtration in a much broader sense.

We fix a field $\Fi$. All homology groups and vector spaces are understood to have coefficients in $\Fi$, which we usually do not make explicit in our notation.
Let $M$ be a closed smooth manifold of dimension $m$. Given a Morse-Smale vector field $v$ on $M$, we can decompose $M$ as a disjoint union of the unstable manifolds of the fixed points and closed orbits of $v$ by \Cref{prop:decomposition-unstable-manifolds}.

To make this more precise, we first define the boundary of an unstable manifold. If $\beta$ is either a fixed point or a closed orbit of index $k$, we denote by $\overline{W^u(\beta)}$ the closure of the unstable manifold of $\beta$ in $M$. We define $\partial W^u(\beta) := \overline{W^u(\beta)} \setminus W^u(\beta)$. Note that this is equivalent to the definition given in \cite{Smale1960MorseIineq}.
Now let $L_{-1}:=\emptyset$ and, for $i\ge 1$, let $L_i$ be the union of all those $W^u(\beta)$, where $\beta$ is a singular element of $v$, such that $\partial W^u(\beta) \subseteq L_{i-1}$. 

\begin{proposition}\label{prop:exist-number}
    There exists a number $n$ such that $L_n = M$.
\end{proposition}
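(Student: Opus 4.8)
The plan is to argue that the sequence $L_{-1} \subseteq L_0 \subseteq L_1 \subseteq \cdots$ strictly increases as long as it has not exhausted $M$, and then invoke finiteness of the set of singular elements. First I would observe that $M$ is covered by the finitely many unstable manifolds $W^u(\beta)$ (by \Cref{prop:decomposition-unstable-manifolds}, together with the fact that on a closed manifold a Morse-Smale vector field has only finitely many fixed points and closed orbits). So it suffices to show that every $W^u(\beta)$ is contained in some $L_i$. Suppose not, and let $\mathcal{S}$ be the nonempty set of singular elements $\beta$ with $W^u(\beta) \not\subseteq L_i$ for all $i$; equivalently, $\beta \in \mathcal{S}$ iff $\partial W^u(\beta) \not\subseteq \bigcup_i L_i =: L_\infty$. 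Pick $\beta \in \mathcal{S}$ such that $\dim W^u(\beta)$ is minimal among elements of $\mathcal{S}$.

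The key step is to analyze $\partial W^u(\beta) = \overline{W^u(\beta)} \setminus W^u(\beta)$. The standard structure theory for Morse-Smale flows (the $\lambda$-lemma / inclination lemma, as in \cite{palis2012geometric,Smale_DiffDynSyst67}, or the closure formula in \cite{Smale1960MorseIineq}) gives that $\overline{W^u(\beta)}$ is a union of $W^u(\beta)$ together with unstable manifolds $W^u(\beta')$ of singular elements $\beta'$ that are "below" $\beta$ in the flow, and each such $\beta'$ satisfies $\dim W^u(\beta') < \dim W^u(\beta)$: indeed a point in $\partial W^u(\beta)$ lies on some orbit whose $\alpha$-limit set is a singular element $\beta'$ with a connecting orbit from $\beta'$ into $\beta$, forcing $W^u(\beta') \subseteq \overline{W^u(\beta)}$ and, by transversality of $W^u(\beta')$ with $W^s(\beta)$ together with dimension counting, $\ind(\beta') < \ind(\beta)$ when $\beta'$ is a fixed point, and more generally $\dim W^u(\beta') < \dim W^u(\beta)$ using \Cref{lem:unstable-manifold-homeomorphic}. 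Hence $\partial W^u(\beta)$ is a finite union of unstable manifolds $W^u(\beta')$ with $\dim W^u(\beta') < \dim W^u(\beta)$. By minimality of $\dim W^u(\beta)$, none of these $\beta'$ lies in $\mathcal{S}$, so each $W^u(\beta') \subseteq L_\infty$, and therefore $\partial W^u(\beta) \subseteq L_\infty$. But then for large enough $i$ (using that there are only finitely many singular elements, so $L_\infty = L_N$ for some finite $N$) we get $\partial W^u(\beta) \subseteq L_{i-1}$, which by definition of $L_i$ forces $W^u(\beta) \subseteq L_i \subseteq L_\infty$, contradicting $\beta \in \mathcal{S}$. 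Hence $\mathcal{S} = \emptyset$, every unstable manifold is eventually absorbed, and since the union stabilizes after finitely many steps we obtain $L_n = M$ for some $n$ (e.g. $n$ equal to the number of distinct values of $\dim W^u(\beta)$, or simply $n$ the number of singular elements).

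The main obstacle I expect is making precise the claim that $\partial W^u(\beta)$ is a union of lower-dimensional unstable manifolds — i.e. the closure formula $\overline{W^u(\beta)} = \bigsqcup_{\beta' \preceq \beta} W^u(\beta')$ together with the strict drop in dimension. This is exactly where the Morse-Smale transversality hypothesis and hyperbolicity enter: transversality of $W^u(\beta')$ with $W^s(\beta)$ prevents a connecting orbit unless $\dim W^u(\beta') + \dim W^s(\beta) \ge \dim M + 1$ along the connection (accounting for the flow direction), which after subtracting dimensions yields $\dim W^u(\beta') < \dim W^u(\beta)$; one also has to handle the closed-orbit case, where $\dim W^u(\gamma) = \ind(\gamma) + 1$ by \Cref{lem:unstable-manifold-homeomorphic}, so the bookkeeping is slightly different but the same inequality survives. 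Everything else — covering $M$, stabilization of the union, the well-ordering argument on dimensions — is routine once this structural input is in hand, and it is precisely the content that Smale establishes in \cite{Smale1960MorseIineq}, so the cleanest writeup simply cites that closure formula and then runs the induction on $\dim W^u(\beta)$ as above.
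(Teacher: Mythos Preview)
Your argument has a genuine gap at precisely the point you flag as ``the main obstacle.'' You claim that whenever $W^u(\beta') \subseteq \partial W^u(\beta)$ one has $\dim W^u(\beta') < \dim W^u(\beta)$, and you assert that after the closed-orbit bookkeeping ``the same inequality survives.'' It does not. If $\gamma_1,\gamma_2$ are closed orbits of the \emph{same} index $k$, then by \Cref{lem:unstable-manifold-homeomorphic} we have $\dim W^u(\gamma_i)=k+1$ and $\dim W^s(\gamma_2)=m-k$, so a transverse intersection $W^u(\gamma_1)\pitchfork W^s(\gamma_2)$ has dimension $(k+1)+(m-k)-m=1$, i.e.\ exactly a heteroclinic flow line. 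Such a connection is perfectly allowed by the Morse--Smale condition; the paper explicitly notes in \Cref{sec:filtration-induced-by-vector-field} that ``the stable and unstable manifolds of periodic orbits of the same index may intersect.'' In that situation $W^u(\gamma_2)\subseteq\partial W^u(\gamma_1)$ with $\dim W^u(\gamma_2)=\dim W^u(\gamma_1)$, and your minimality-of-dimension argument breaks: nothing prevents both $\gamma_1$ and $\gamma_2$ from lying in $\mathcal S$, so choosing $\beta=\gamma_1$ of minimal dimension does not let you conclude $W^u(\gamma_2)\subseteq L_\infty$. (Incidentally, the connecting orbit runs from $\beta$ to $\beta'$, so the relevant transversality is $W^u(\beta)\pitchfork W^s(\beta')$, not $W^u(\beta')\pitchfork W^s(\beta)$ as you wrote.)

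The fix is exactly what the paper does: replace ``dimension of $W^u(\beta)$'' by the \emph{height} of $\beta$ in the partial order $\beta'\preceq\beta \iff W^u(\beta')\subseteq\overline{W^u(\beta)}$. That this relation is a genuine partial order (no cycles, antisymmetry) is the nontrivial structural input, established in \cite{Smale1960MorseIineq,Smale_DiffDynSyst67}; once you have it, finiteness of the set of critical elements gives finite height and the result follows. Your contradiction argument would go through verbatim if you selected $\beta\in\mathcal S$ of minimal \emph{height} rather than minimal dimension, since every $\beta'$ with $W^u(\beta')\subseteq\partial W^u(\beta)$ has strictly smaller height even when the dimensions coincide.
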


\begin{proof}
    By \Cref{prop:decomposition-unstable-manifolds}, it is enough to show that every unstable manifold eventually gets added in the filtration. Note that the filtration can also be expressed in terms of a certain partial order $\le$ on the set of unstable manifolds of critical elements, where $W^u(\beta) \le W^u(\beta')$ if $W^u(\beta) \subseteq \overline{W^u(\beta')}$. The fact that this is indeed a partial order was proved in (either \cite{Smale_DiffDynSyst67} or \cite{Smale1960MorseIineq}), see also \cite[Theorem 3.7]{DangRiv2020_SpectralAnalysisMSflows1} for a more modern exposition. Every unstable manifold gets added precisely 1 step after the last unstable manifold below it with respect to $\le$ has been added. In other words, $W^u(\beta) \subseteq L_p \setminus L_{p-1}$, where $p$ is the height of $W^u(\beta)$ in the poset. Since we have finitely many critical elements, the height of this poset is finite, and hence, if we choose $n$ to be equal to the height of the poset, then $L_n = M$.
\end{proof}

Later, we will consider a spectral sequence that can be assigned to this filtration.

\section{Turning spectral sequences into chain complexes}\label{sec:turning-ss-into-ch}

This section is purely algebraic. We describe a method for turning a spectral sequence coming from an exact couple into a chain complex. The idea is to decompose the first page of the spectral sequence in such a way that the differentials from the different pages of the spectral sequence induce isomorphisms between certain summands of the decomposition. Taking the homology of this complex then gives us back the infinity page of the spectral sequence. We describe this procedure in the case where the spectral sequence is induced from an exact couple, since this condition holds true in the case we care about and it simplifies the notation.

\subsection{Preliminaries on spectral sequences and exact couples}\label{sec:spec-seq}

We state some definitions and results from the theory of spectral sequences. We adapt the definitions to our needs, since we are only working with field coefficients. More details (and more generality) can be found in \cite{weibel1994introduction}. Consider also that there are some errors with the indices in \cite[Section 5.9]{weibel1994introduction}, see \cite{WeibelHomErrata} for a list of corrections. 
We confine ourselves to spectral sequences and exact couples of {\em finite total dimension}. This means that all their vector spaces are finite dimensional and only finitely many of them are non-zero. This assumption is assumed implicitly throughout this paper.

\begin{definition}
    A \textbf{spectral sequence} starting at page $a \in \Z$ consists of the following data:
    \begin{itemize}
        \item vector spaces $E^r_{p,q}$ for all $r\ge a$ and $p,q \in \Z$,
        \item linear maps $d^r_{p,q}\colon E^r_{p,q} \to E^r_{p-r,q+r-1}$ for all $p,q$, satisfying $d^r_{p-r,q+r-1} \circ d^r_{p,q} = 0$,
        \item isomorphisms between $E^{r+1}$ and the homology of $E^r$, i.e.
            \[
            E^{r+1}_{p,q} \cong \ker(d^r_{p,q}) / \im(d^r_{p+r,q-r+1}).
            \]
    \end{itemize}
\end{definition}

The collection of the vector spaces $E^r$ together with the linear maps $d^r$ for some fixed value of $r$ is called the \textbf{$r$-th page} of the spectral sequence. The \textbf{total degree} of a term $E^r_{p,q}$ in a spectral sequence is the number $p+q$. A spectral sequence is called \textbf{bounded} if for each $n$ and $r$, there are only finitely many non-zero terms of total degree $n$ in the $r$-th page, and \textbf{bounded below} if for every $n$ there exists an integer $f(n)$ such that for $p<f(p+q)$ we have $E_{p,q}=0$. Note that this implies that for every $p,q$ there exists $r_0$ such that for all $r\ge r_0$ we have $E^r_{p,q}=E^{r+1}_{p,q}$. We denote this vector space by $E^\infty_{p,q}$. The collection of $E^\infty_{p,q}$ for all $p,q$ is called the \textbf{$\infty$-page} of the spectral sequence.

\begin{definition}
    A bounded spectral sequence is said to \textbf{converge to $H_*$}, where $H_*=(H_n)_{n \in \Z}$ is a collection of vector spaces, if for each $n$ there exists a finite filtration
    \[
    0=H^s_n \subseteq \cdots \subseteq H^{p-1}_n \subseteq H^p_n \subseteq H^{p+1}_n \subseteq \cdots \subseteq H^t_n =H_n
    \]
    and isomorphisms $E^\infty_{p,q} \cong H^p_n / H^{p-1}_n$, where $n=p+q$. In symbolic notation, we write $E \implies H_*$.
\end{definition}

Thanks to the fact that we are using field coefficients, convergence of a spectral sequence to a collection of vector spaces is enough to reconstruct these vector spaces from the infinity page of the spectral sequence up to isomorphism. For self-containment, we give a simple proof of this statement in the case of finite total dimension, which is sufficient for our purposes.

\begin{proposition}\label{prop:ss-converges-implies-isomorphism}
    If $E$ is a spectral sequence that converges to a collection of vector spaces $H_* = (H_n)_{n\in \Z}$, then $H_n \cong \bigoplus_{p+q=n}E^\infty_{p,q}$ for all $n \in \Z$.
\end{proposition}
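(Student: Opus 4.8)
The statement asserts that, over a field, convergence of a bounded spectral sequence to $H_*$ forces $H_n \cong \bigoplus_{p+q=n} E^\infty_{p,q}$. The plan is to simply unwind the definition of convergence, which gives for each $n$ a finite filtration $0 = H^s_n \subseteq \cdots \subseteq H^t_n = H_n$ together with isomorphisms $E^\infty_{p,q} \cong H^p_n / H^{p-1}_n$ where $n = p+q$. Since we work in the category of $\Fi$-vector spaces, every short exact sequence splits; in particular each inclusion $H^{p-1}_n \hookrightarrow H^p_n$ fits into a split short exact sequence $0 \to H^{p-1}_n \to H^p_n \to H^p_n/H^{p-1}_n \to 0$, so $H^p_n \cong H^{p-1}_n \oplus (H^p_n/H^{p-1}_n)$.

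From here I would proceed by a finite induction on $p$ running from $s$ up to $t$. The base case $H^s_n = 0$ is the empty direct sum. For the inductive step, assuming $H^{p-1}_n \cong \bigoplus_{\ell \le p-1} H^\ell_n / H^{\ell-1}_n$, the splitting above gives $H^p_n \cong H^{p-1}_n \oplus (H^p_n/H^{p-1}_n) \cong \bigoplus_{\ell \le p} H^\ell_n/H^{\ell-1}_n$. Taking $p = t$ yields $H_n = H^t_n \cong \bigoplus_{\ell} H^\ell_n/H^{\ell-1}_n \cong \bigoplus_{\ell} E^\infty_{\ell,\, n-\ell}$, which is exactly the claimed decomposition after reindexing $q = n - \ell$. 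The finiteness of the filtration (guaranteed by the convergence definition, and ultimately by our standing finite-total-dimension assumption) ensures this is a finite induction and that the direct sum is finite.

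There is essentially no obstacle here: the only substantive input is that short exact sequences of vector spaces split, and the rest is bookkeeping with the filtration indices. The one point worth stating carefully is that the isomorphism produced is not canonical — it depends on a choice of splitting at each stage — but the proposition only claims an abstract isomorphism, so this is harmless. I would also remark in passing that boundedness is what makes the filtration finite for each fixed $n$ (finitely many nonzero $E^\infty_{p,q}$ with $p+q = n$), which is implicitly what lets us speak of $E^\infty$ at all and keeps the direct sum finite.
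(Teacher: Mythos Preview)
Your argument is correct. You build the isomorphism directly by repeatedly splitting the short exact sequences $0 \to H^{p-1}_n \to H^p_n \to H^p_n/H^{p-1}_n \to 0$ and inducting up the filtration, which is the standard way to see that a filtered vector space is (non-canonically) the direct sum of its associated graded pieces.

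The paper takes a marginally different route: rather than constructing splittings, it simply invokes the standing finite-total-dimension hypothesis and compares dimensions, computing $\dim(H_n) = \sum_p \dim(H^p_n/H^{p-1}_n) = \sum_p \dim(E^\infty_{p,n-p})$. This is a one-line argument once you accept that finite-dimensional vector spaces of equal dimension are isomorphic. Your approach has the mild advantage of not needing finite-dimensionality of the individual pieces (splitting works for arbitrary vector spaces), while the paper's approach is shorter and avoids the induction bookkeeping. Both are entirely elementary and neither offers a real advantage over the other here.
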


\begin{proof}
    Since we are dealing with finite dimensional vector spaces, it suffices to show that the dimensions agree. We have
    \[
    \dim (H_n) = \sum_p \dim \left(H_n^p/H_n^{p-1}\right) = \sum_p \dim \left(E^\infty_{p,n-p}\right) = \dim \left( \bigoplus_p E^\infty_{p,n-p}\right) = \dim \left( \bigoplus_{p+q=n} E^\infty_{p,q}\right),
    \]
    where the first and third equality follow from standard linear algebra arguments, the second holds because $E$ converges to $H_*$, and in the fourth equality we are just reindexing.
\end{proof}

We will be interested in spectral sequences that come from exact couples.

\begin{definition}
    An \textbf{exact couple} (of order $r$, for some $r \in \Z$) 
    \begin{center}
        \begin{tikzcd}[row sep=25pt]
            \E\colon &D \ar[rr,"i"] &&D \ar[dl,"j"] \\
            &&E \ar[ul,"k"]
        \end{tikzcd}
    \end{center}
    consists of bigraded vector spaces $D=\bigoplus_{p,q} D_{p,q}$ and $E=\bigoplus_{p,q} E_{p,q}$ together with bigraded linear maps $i,j,k$ with bidegrees $(1,-1),(-r,r), (-1,0)$, satisfying the exactness conditions $\ker(i)=\im(k)$, $\ker(j)=\im(i)$, $\ker(k)=\im(j)$.
\end{definition}

If we have such an exact couple, the map $d:=jk$ is a differential (i.e. $d^2=0$) on $E$ and we may consider the resulting homology $H(E) := \ker(d)/\im(d)$. The bigrading on $E$ induces a bigrading on $H(E)$ (which we continue to hide from the notation). By replacing $E$ with $H(E)$ and $D$ with $i(D)$ we get another exact couple, as described in the next definition.

\begin{definition}
    Given an exact couple (of order $r$)
    \begin{center}
        \begin{tikzcd}[row sep=25pt]
            \E\colon &D \ar[rr,"i"] &&D \ar[dl,"j"] \\
            &&E \ar[ul,"k"]
        \end{tikzcd}
    \end{center}
    its \textbf{derived couple} is the exact couple (of order $r+1$)
    \begin{center}
        \begin{tikzcd}[row sep=25pt]
            \E'\colon &D' \ar[rr,"i'"] &&D' \ar[dl,"j'"] \\
            &&E' \ar[ul,"k'"]
        \end{tikzcd}
    \end{center}
    where $D' = i(D)$, $E'=H(E)$, $i'$ is the restriction of $i$, $k'([\alpha]) = k(\alpha)$, and $j'(i(x)) = [j(x)]$. The bidegrees of $i',j',k'$ are $(1,-1),(-(r+1),r+1), (-1,0)$, respectively. We will usually write $i(D)$ instead of $D'$, write $i$ for $i'$ as it is just the restriction, and write $k$ for $k'$ as it is just the induced map on the quotient. 
\end{definition}

A direct computation/diagram chase shows that the maps in the definition of the derived couple are well-defined and satisfy the exactness conditions, so that the derived couple is again an exact couple. Higher derived couples could be defined iteratively (i.e. the derived couple of the derived couple etc.), but it is also possible to give a direct definition. For this, we first define the subspaces 
\begin{align*}
    B^{(r)} &:= j(\ker(i^r)),
    &Z^{(r)} &:= k^{-1}(\im(i^r)).
\end{align*}
We use the convention that $i^0$ denotes the identity map on $D$, so $\ker(i^0)=\{0\}$ and $\im(i^0)=D$, thus $B^{(0)}=\{0\}$ and $Z^{(0)} = E$. Since the kernels of $i^r$ are growing and the images of $i^r$ are shrinking when $r$ gets larger (they can also stay the same), we have that $B^{(r)} \subseteq B^{(r+1)}$ and $Z^{(r+1)} \subseteq Z^{(r)}$. Moreover, all the $B^{(r)}$ are contained inside $j(D) = \im(j)$ and all the $Z^{(r)}$ contain $k^{-1}(\{0\}) = \ker(k)$, which are both the same due to exactness of $\E$. All of this can be summarized in the chain of inclusions
\begin{equation*}
    \{0\} = B^{(0)} \subseteq B^{(1)} \subseteq B^{(2)} \subseteq \cdots \subseteq j(D) = \im(j) = \ker(k) = k^{-1}(\{0\}) \subseteq \cdots \subseteq Z^{(2)} \subseteq Z^{(1)} \subseteq Z^{(0)} = E.
\end{equation*}
We define the quotient $E^{(r)} := Z^{(r)} / B^{(r)}$ and define the \textbf{$r$-th derived couple} to be the exact couple
\begin{center}
    \begin{tikzcd}[row sep=25pt]
        \E^{(r)}\colon &i^r(D) \ar[rr,"i^{(r)}"] &&i^r(D) \ar[dl,"j^{(r)}"] \\
        &&E^{(r)} \ar[ul,"k^{(r)}"]
    \end{tikzcd}
\end{center}
where $i^{(r)}$ denotes the restriction of $i$ to $i^r(D)$, $k^{(r)}([\alpha]):=k(\alpha)$, and $j^{(r)}$ maps an element $i^r(x)$ to $[j(x)]$. For the sake of completeness we include the following statement, whose proof consists mostly of straightforward computations and verifications.

\begin{proposition}
    The maps $i^{(r)}, j^{(r)}, k^{(r)}$ are well-defined and have bidegrees $(1,-1),(-(a+r),a+r), (-1,0)$, respectively. Moreover, $\E^{(r)}$ is an exact couple (of order $a+r$).
\end{proposition}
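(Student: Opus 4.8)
The plan is to establish everything by elementary diagram chases carried out inside the fixed exact couple $\E$, whose order I denote by $a$, using only its three exactness relations $\ker i = \im k$, $\ker j = \im i$, $\ker k = \im j$ (equivalently $ik=0$, $ji=0$, $kj=0$) together with the defining formulas $B^{(r)} = j(\ker i^r)$ and $Z^{(r)} = k^{-1}(\im i^r)$. An alternative would be to induct on $r$ by identifying $\E^{(r+1)}$ with the derived couple of $\E^{(r)}$ and invoking the single-step case recalled above, but this identification itself requires checking that $Z^{(r+1)}/B^{(r)}$ and $B^{(r+1)}/B^{(r)}$ are the kernel and image of $d^{(r)} = j^{(r)} k^{(r)}$, so I would prefer the direct route.

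First I would check well-definedness. For $i^{(r)}$ there is nothing to do, since $i(i^r(D)) = i^{r+1}(D) \subseteq i^r(D)$. For $k^{(r)}\colon E^{(r)} \to i^r(D)$, the target is correct because $k(Z^{(r)}) \subseteq \im(i^r)$ by the definition of $Z^{(r)}$, and the assignment $[\alpha] \mapsto k(\alpha)$ does not depend on the representative since $B^{(r)} \subseteq \im j = \ker k$. For $j^{(r)}\colon i^r(D) \to E^{(r)}$, given $\xi = i^r(x)$ one has $k(j(x)) = 0 \in \im(i^r)$, so $j(x) \in Z^{(r)}$ and $[j(x)]$ makes sense in $E^{(r)} = Z^{(r)}/B^{(r)}$; and if $i^r(x) = i^r(x')$ then $x - x' \in \ker i^r$, hence $j(x) - j(x') \in j(\ker i^r) = B^{(r)}$ and $[j(x)] = [j(x')]$.

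Next, the bidegrees. The maps $i^{(r)}$ and $k^{(r)}$ are (co)restrictions of $i$ and $k$, so they keep the bidegrees $(1,-1)$ and $(-1,0)$. For $j^{(r)}$, take $x \in D_{p,q}$; then $i^r(x) \in D_{p+r,\,q-r}$ while $j(x) \in E_{p-a,\,q+a}$ because $\E$ has order $a$, so $j^{(r)}$ has bidegree $\bigl((p-a)-(p+r),\,(q+a)-(q-r)\bigr) = (-(a+r),\,a+r)$, as claimed; in particular $\E^{(r)}$ has order $a+r$.

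Finally the three exactness identities, each by a short inclusion in both directions. For $\ker k^{(r)} = \im j^{(r)}$: the inclusion $\supseteq$ is immediate from $kj=0$; conversely, if $k(\alpha)=0$ then $\alpha \in \ker k = \im j$, say $\alpha = j(x)$, so $[\alpha] = j^{(r)}(i^r(x))$. For $\ker j^{(r)} = \im i^{(r)}$: $\supseteq$ follows from $ji=0$; conversely, if $[j(x)]=0$ in $E^{(r)}$ then $j(x) = j(z)$ with $z \in \ker i^r$, so $x - z \in \ker j = \im i$, say $x - z = i(w)$, and then $i^r(x) = i^{r+1}(w) = i^{(r)}(i^r(w))$. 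For $\ker i^{(r)} = \im k^{(r)}$: $\supseteq$ follows from $ik=0$; conversely, if $\xi \in i^r(D)$ with $i(\xi)=0$, then $\xi \in \ker i = \im k$, say $\xi = k(\eta)$, and since $\xi \in \im i^r$ we get $\eta \in k^{-1}(\im i^r) = Z^{(r)}$, whence $\xi = k^{(r)}([\eta])$. The only point that calls for a little attention throughout is keeping track of whether an element lies in $Z^{(r)}$ or in $B^{(r)}$ before passing to the quotient $E^{(r)}$; I expect this to be the main (and only) source of friction, the whole proof otherwise being routine bookkeeping with the exactness of $\E$.
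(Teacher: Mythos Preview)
Your proposal is correct and follows essentially the same direct diagram-chase approach as the paper's proof: well-definedness from the chain of inclusions $B^{(r)}\subseteq\ker k$ and $j(D)\subseteq Z^{(r)}$, bidegrees by inspection, and exactness by the same six inclusions. Your treatment of $\ker j^{(r)}\subseteq\im i^{(r)}$ is in fact slightly more explicit than the paper's, which writes ``i.e.\ $j(x)=0$'' where one really only knows $j(x)\in B^{(r)}$ and must adjust $x$ by an element of $\ker i^r$, exactly as you do.
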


\begin{proof}
    We start by showing that the maps are well-defined. The map $i^{(r)}$ is well-defined since the restriction of $i$ to $i^r(D)$ clearly maps into $i^r(D)$. For $k^{(r)}$, we first need to show that it does not depend on the chosen representative. For this it suffices to show that $k(\beta)=0$ for any $\beta \in B^{(r)}$, which holds since $B^{(r)} \subseteq \im(j) = \ker(k)$. The fact that the image of $k^{(r)}$ lies inside $i^r(D)$ follows directly from the definition of $Z^{(r)}$. For $j^{(r)}$ we have to check that its definition does not depend upon the choice of a preimage under $i^r$. This follows from the fact that if $i^rx=0$, then $x \in \ker(i^r)$, thus $j(x) \in j(\ker(i^r)) = B^{(r)}$. On the other hand, $j(x) \in \im(j) = \ker(k) \subseteq Z^{(r)}$.

    The bidegree of $i^{(r)}$ is clearly the same as the bidegree of $i$, as it is just the restriction. Similarly, the bidegree of $k^{(r)}$ is the same as the bidegree of $k$, since it is the induced map on a subquotient. As for $j^{(r)}$, since it is defined by a combination of the map $j$ and taking an $r$-fold preimage along the map $i$, it's bidegree equals the bidegree of $j$ minus $r$ times the bidegree of $i$, which yields $(-a-r,a+r)$.

    To show the exactness of $\E^{(r)}$, we need to show three equalities, which boil down to six inclusions. The inclusions $\im(k^{(r)})\subseteq \ker(i^{(r)})$, $\im(i^{(r)})\subseteq \ker(j^{(r)})$, and $\im(j^{(r)})\subseteq \ker(k^{(r)})$ follow directly from $\im(k)\subseteq \ker(i)$, $\im(i)\subseteq \ker(j)$, and $\im(j)\subseteq \ker(k)$. Let us now show the other three inclusions.

    To show that $\ker(i^{(r)}) \subseteq \im(k^{(r)})$, let $x\in i^r(D)$ with $ix=0$. Because $\E$ is exact, there exists $\alpha \in E$ such that $k(\alpha)=x$. By definition of $Z^{(r)}$, it follows that $\alpha \in Z^{(r)}$, thus $x = k(\alpha) = k^{(r)}([\alpha]) \in \im(k^{(r)})$.

    To show that $\ker(j^{(r)}) \subseteq \im(i^{(r)})$, consider $i^rx \in i^r(D)$ with $j^{(r)}(i^r(x))=0$, i.e. $j(x)=0$. By exactness of $\E$, $x \in \im(i)$, which implies that $i^r(x) \in i^{r+1}(D)=\im(i^{(r)})$.

    To show that $\ker(k^{(r)}) \subseteq \im(j^{(r)})$, consider $[\alpha] \in E^{(r)}$ with $k^{(r)}([\alpha])=0$. This means that $k(\alpha)=0$, so, by exactness of $\E$, $\alpha =j(x)$ for some $x \in D$. Therefore $[\alpha]=[j(x)] = j^{(r)}(i^r(x))$, hence $[\alpha] \in \im(j^{(r)})$. 
\end{proof}

We want to show that our direct definition of the $r$-th derived couple yields the same result as applying the definition of the derived couple $r$ times. For this, we first need an algebraic lemma, which could be summarized by saying that a subquotient of a subquotient is a subquotient.

\begin{lemma}[Proposition  2.1 of \cite{AtiyahMacDonald1994IntroToCommAlg}]\label{lem:subquot-explicit}
    Let $Z$ be a vector space with subspaces $B \subseteq X' \subseteq X \subseteq Z$. Then, $X/X' \cong (X/B) / (X'/B)$, where the isomorphisms are explicitly given by
    \[
    \begin{array}{rcl}
        X/X' &\cong &(X/B) / (X'/B) \\
        {[x]_{X'}} &\overset{\varphi}{\longmapsto} &[[x]_B]_{X'/B} \\
        {[x]_{X'}} &\overset{\psi}{\text{\reflectbox{$\longmapsto$}}} &[[x]_B]_{X'/B}
    \end{array}
    \]
    for all $x \in X$.
\end{lemma}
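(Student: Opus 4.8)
This is the third isomorphism theorem for vector spaces, so the plan is simply to write down the map $\varphi$ and verify it is a well-defined linear isomorphism whose inverse is $\psi$. Since $B \subseteq X' \subseteq X \subseteq Z$, the subspace $X'/B$ sits inside $X/B$ and all the displayed quotients make sense; the assignment $x \mapsto \bigl[[x]_B\bigr]_{X'/B}$ is then just the composite of the canonical projections $X \twoheadrightarrow X/B \twoheadrightarrow (X/B)/(X'/B)$, hence a linear map $X \to (X/B)/(X'/B)$.

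The key computation is the kernel of this composite. An element $x \in X$ maps to zero exactly when $[x]_B \in X'/B$, i.e.\ when $x - x' \in B$ for some $x' \in X'$; because $B \subseteq X'$, this is equivalent to $x \in X'$. So the kernel is precisely $X'$, and the first isomorphism theorem yields a well-defined injective linear map $\varphi \colon X/X' \to (X/B)/(X'/B)$ given by $[x]_{X'} \mapsto \bigl[[x]_B\bigr]_{X'/B}$. Surjectivity is immediate, since both projections above are surjective, so every class in $(X/B)/(X'/B)$ has the form $\bigl[[x]_B\bigr]_{X'/B}$ with $x \in X$.

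Being a linear bijection, $\varphi$ admits an inverse, which is exactly the map $\psi$ of the statement, $\bigl[[x]_B\bigr]_{X'/B} \mapsto [x]_{X'}$; its well-definedness follows by reversing the kernel computation above (if $[x]_B = [x']_B$ with $x,x' \in X$, then $x - x' \in B \subseteq X'$, so $[x]_{X'} = [x']_{X'}$). There is no real obstacle here: the only subtle point is the injectivity of $\varphi$ (equivalently, the well-definedness of $\psi$), and this is the single place where one uses the hypothesis $B \subseteq X'$ rather than merely $B \subseteq X$.
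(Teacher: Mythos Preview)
Your argument is correct and is exactly the standard proof of the third isomorphism theorem. The paper does not actually supply its own proof of this lemma; it simply cites \cite{AtiyahMacDonald1994IntroToCommAlg}, so there is nothing further to compare.
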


Now we are ready to prove the statement we mentioned before. Also this is not a new result, compare e.g. Exercise 5.9.1 in \cite{weibel1994introduction}. Again, we include a proof for convenience.

\begin{lemma}\label{lem:iterated-derived-couples}
    We have $\E' = \E^{(1)}$ and $(\E^{(r)})' \cong \E^{(r+1)}$.
\end{lemma}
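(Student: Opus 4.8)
The plan is to verify the two claims of \Cref{lem:iterated-derived-couples} in turn, the first by direct inspection and the second by induction on $r$ using the explicit descriptions of the derived couples and the algebraic \Cref{lem:subquot-explicit}.

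First, for $\E' = \E^{(1)}$, I would simply unwind the definitions. On the $D$-side, $\E'$ has $D' = i(D) = i^1(D)$, which is literally the $D$-side of $\E^{(1)}$, and $i'$ in both cases is the restriction of $i$. On the $E$-side, I must check $E' = H(E) = \ker(d)/\im(d)$ equals $E^{(1)} = Z^{(1)}/B^{(1)} = k^{-1}(\im(i))/j(\ker(i))$. Since $d = jk$, a diagram chase using exactness of $\E$ gives $\ker(d) = \ker(jk) = k^{-1}(\ker j) = k^{-1}(\im i) = Z^{(1)}$ and $\im(d) = \im(jk) = j(\im k) = j(\ker i) = B^{(1)}$ (here the middle equality uses $\ker j = \im i$, and for $\im(d)$ one uses $\im k = \ker i$ together with the fact that $j$ applied to $k(E)$ is the same as $j$ applied to $\ker i$). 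So $E' = E^{(1)}$ on the nose, and one checks $j', k'$ agree with $j^{(1)}, k^{(1)}$ directly from their formulas. This part is routine bookkeeping.

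The substantive part is $(\E^{(r)})' \cong \E^{(r+1)}$. On the $D$-side this is immediate: $(\E^{(r)})'$ has $D$-space $i^{(r)}(i^r(D)) = i^{r+1}(D)$, matching $\E^{(r+1)}$, with $i$-map the restriction of $i$ in both cases. On the $E$-side I need $H(E^{(r)}) \cong E^{(r+1)}$, i.e. $\ker(d^{(r)})/\im(d^{(r)}) \cong Z^{(r+1)}/B^{(r+1)}$ where $d^{(r)} = j^{(r)}k^{(r)}$ on $E^{(r)} = Z^{(r)}/B^{(r)}$. The strategy is to compute $\ker(d^{(r)})$ and $\im(d^{(r)})$ as subspaces of $E^{(r)} = Z^{(r)}/B^{(r)}$, show they have the forms $Z^{(r+1)}/B^{(r)}$ and $B^{(r+1)}/B^{(r)}$ respectively — using the same style of diagram chase as in the $r=1$ case, now applied to the maps $j^{(r)}, k^{(r)}$ and relying on the inclusions $B^{(r)} \subseteq B^{(r+1)} \subseteq Z^{(r+1)} \subseteq Z^{(r)}$ established in the excerpt — and then invoke \Cref{lem:subquot-explicit} with $B = B^{(r)}$, $X' = B^{(r+1)}$, $X = Z^{(r+1)}$, $Z = Z^{(r)}$ to conclude
\[
H(E^{(r)}) = \frac{Z^{(r+1)}/B^{(r)}}{B^{(r+1)}/B^{(r)}} \cong Z^{(r+1)}/B^{(r+1)} = E^{(r+1)}.
\]
Concretely, for the image: $\im(d^{(r)}) = j^{(r)}(k^{(r)}(E^{(r)})) = j^{(r)}(k(Z^{(r)})/\!\sim)$; since $k(Z^{(r)}) = \im(i^r) \cap \im(k)= \im(i^r)\cap \ker(i)$ and $j^{(r)}$ sends $i^r(x)$ to $[j(x)]_{B^{(r)}}$, the image is $\{[j(x)]_{B^{(r)}} : i^r(x) \in \ker(i)\} = j(\ker(i^{r+1}))/B^{(r)} = B^{(r+1)}/B^{(r)}$, using $i^r(x)\in\ker i \iff x \in \ker(i^{r+1})$. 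For the kernel: $[\![\alpha]\!] \in \ker(d^{(r)})$ with $\alpha \in Z^{(r)}$ means $j^{(r)}(k(\alpha)) \in B^{(r+1)}$, i.e. $[j(y)]_{B^{(r)}} \in B^{(r+1)}/B^{(r)}$ where $k(\alpha) = i^r(y)$; chasing this back through exactness shows $k(\alpha) \in \im(i^{r+1})$, hence $\alpha \in k^{-1}(\im(i^{r+1})) = Z^{(r+1)}$, and conversely; so $\ker(d^{(r)}) = Z^{(r+1)}/B^{(r)}$. Finally I would check that under the isomorphism $\varphi$ of \Cref{lem:subquot-explicit} the maps $i, j, k$ of $(\E^{(r)})'$ correspond to those of $\E^{(r+1)}$ — this is a compatibility check on the explicit formulas, straightforward but needing care with representatives.

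The main obstacle I anticipate is precisely these diagram chases identifying $\ker(d^{(r)})$ and $\im(d^{(r)})$ with the claimed subquotients: one must be careful that $k^{(r)}$ is defined on the quotient $Z^{(r)}/B^{(r)}$ and $j^{(r)}$ is defined via preimages under $i^r$, so tracking representatives through both maps simultaneously — and correctly using which exactness relation of $\E$ applies at each step — is where errors creep in. The bidegree verification and the final naturality check of the isomorphism against the structure maps are comparatively mechanical. Since everything is finite-dimensional, there is no subtlety about the quotients or the inclusions being strict versus equalities; it is purely a matter of organizing the chase cleanly, and an alternative, slightly slicker route would be to note that both $(\E^{(r)})'$ and $\E^{(r+1)}$ are exact couples whose associated spectral sequence pages past index $a+r$ agree, but making that into a clean isomorphism of couples still ultimately requires the same identifications, so I would favor the direct computation.
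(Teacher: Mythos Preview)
Your proposal is correct and follows essentially the same route as the paper's own proof: the same identification of $\ker(d^{(r)}) = Z^{(r+1)}/B^{(r)}$ and $\im(d^{(r)}) = B^{(r+1)}/B^{(r)}$ by chasing representatives through $k^{(r)}$ and $j^{(r)}$, the same invocation of \Cref{lem:subquot-explicit}, and the same final commutativity check for the structure maps. One small slip to watch in your kernel sketch: $d^{(r)}([\alpha]) = 0$ means $[j(y)]_{B^{(r)}} = 0$, i.e.\ $j(y) \in B^{(r)}$ (not $B^{(r+1)}$), which one then upgrades to $j(y') = 0$ for a suitable modification $y' = y - z$ with $z \in \ker(i^r)$ before concluding $k(\alpha) \in \im(i^{r+1})$.
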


\begin{proof}
    To see why $\E' = \E^{(1)}$, note that
    \begin{align*}
        \ker(d) &= k^{-1}(\ker(j)) = k^{-1}(\im(i)) = Z^{(1)}, \\
        \im(d) &= j(\im(k)) = j(\ker(i)) = B^{(1)},
    \end{align*}
    and thus $E' = \ker(d)/\im(d) = Z^{(1)} / B^{(1)} = E^{(1)}$. To see that also the maps agree, note that $i'$ and $i^{(1)}$ are both the restriction of $i$ to $i(D)$, $j'=j^{(1)}$ by definition and the same holds for $k'=k^{(1)}$.
    
    Let us now show that $(\E^{(r)})' \cong \E^{(r+1)}$. In other words, we want to show that the two exact couples 
    \begin{center}
    \begin{tikzcd}[row sep=25pt]
        (\E^{(r)})'\colon\hspace{-4ex} &i(i^r(D)) \ar[rr,"(i^{(r)})'"] &&i(i^r(D)) \ar[dl,"(j^{(r)})'"] 
        &\E^{(r+1)}\colon\hspace{-4ex} &i^{r+1}(D) \ar[rr,"i^{(r+1)}"] &&i^{r+1}(D) \ar[dl,"j^{(r+1)}"] \\
        &&(E^{(r)})' \ar[ul,"(k^{(r)})'"]
        &&&&E^{(r+1)} \ar[ul,"k^{(r+1)}"]
    \end{tikzcd}
\end{center}
    are isomorphic. For the upper row in both diagrams we have equality since $i(i^r(D))=i^{r+1}(D)$ and both $(i^{(r)})'$ and $i^{(r+1)}$ are simply the restriction of $i$. To see why $(E^{(r)})' \cong E^{(r+1)}$, note that $(E^{(r)})' = \ker(d^{(r)})/\im(d^{(r)})$, where $d^{(r)} = j^{(r)} \circ k^{(r)}$ is the differential on $E^{(r)}$. The kernel of $d^{(r)}$ can be expressed as follows.
    \begin{align*}
        \ker(d^{(r)}) &= \{[\alpha] \in Z^{(r)}/B^{(r)} \mid d^{(r)}([\alpha]) = 0 \} \\
        &= \{[\alpha] \in Z^{(r)}/B^{(r)} \mid \exists x \in D \text{ with } k(\alpha) = i^r(x) \text{ and } j(x) \in B^{(r)}\} \\
        &= \{[\alpha] \in Z^{(r)}/B^{(r)} \mid \exists x \in D \text{ with } k(\alpha) = i^r(x) \text{ and } j(x)=0\} \\
        &= \{[\alpha] \in Z^{(r)}/B^{(r)} \mid \exists x \in D \text{ with } k(\alpha) = i^{r+1}(x)\} \\
        &= \{[\alpha] \in Z^{(r)}/B^{(r)} \mid \alpha \in k^{-1}(\im(i^{r+1}))\} \\ 
        &= \{[\alpha] \in Z^{(r)}/B^{(r)} \mid \alpha \in Z^{(r+1)} \} = Z^{(r+1)}/B^{(r)}.
    \end{align*}
    To see why the third equality holds, note that if $j(x) \in B^{(r)} = j(\ker(i^r))$, then this means that there exists $y \in \ker(i^r)$ such that $j(x) = j(y)$. If we now consider the element $x' = x-y$, then we have that $i^r(x') = i^r(x) + i^r(y) = i^r(x) = k(\alpha)$ and $j(x') = j(x)-j(y) = 0$. Replacing $x$ by $x'$ yields the equality. For the fourth equality, note that $j(x)=0$ if and only if $x\in \ker j$, which by exactness is equal to $\im i$, so  $x\in \im i^r$ and $j(x)=0$ if and only if $x\in \im i^{r+1}$. All other equalities are the result of unpacking or restating definitions. We continue with the image of $d^{(r)}$:
    \begin{align*}
        \im(d^{(r)}) &= \{[\alpha] \in Z^{(r)}/B^{(r)} \mid \exists [\beta] \in Z^{(r)}/B^{(r)} \text{ such that } [\alpha] = d^{(r)}([\beta]) \} \\ 
        &= \{[\alpha] \in Z^{(r)}/B^{(r)} \mid \exists x \in D \text{ such that } i^r(x) \in \im(k) \text{ and } [j(x)]= [\alpha] \} \\ 
        &= \{[\alpha] \in Z^{(r)}/B^{(r)} \mid \exists x \in \ker(i^{r+1}) \text{ such that } [j(x)]= [\alpha] \} \\
        &= \{[\alpha] \in Z^{(r)}/B^{(r)} \mid \exists x \in \ker(i^{r+1}) \text{ such that } j(x)= \alpha \} \\
        &= \{[\alpha] \in Z^{(r)}/B^{(r)} \mid \alpha \in j(\ker(i^{r+1})) \} \\
        &= \{[\alpha] \in Z^{(r)}/B^{(r)} \mid \alpha \in B^{(r+1)} \} = B^{(r+1)}/B^{(r)}.
    \end{align*}
    To see why the third equality holds, note that $\im(k)=\ker(i)$ by exactness and that $i^r(x) \in \ker(i)$ means exactly that $i^{r+1}(x)=0$, i.e. $x \in \ker(i^{r+1})$. For the fourth equality, note that if $[j(x)]=[\alpha]$, this means that $j(x)-\alpha \in B^{(r)} = j(\ker(i^r))$, i.e. there exists $y \in \ker(i^r)$ such that $j(x)-\alpha=j(y)$. If we consider the element $x' = x-y$, then we have $i^{r+1}(x') = i^{r+1}(x) + i^{r+1}(y) =0$ and $j(x')=j(x)-j(y)=\alpha$, thus by replacing $x$ with $x'$ we get the equality. Again, all other equalities are the result of reformulations of different definitions. We thus have
    \[
    (E^{(r)})' = \ker(d^{(r)})/\im(d^{(r)}) = (Z^{(r+1)}/B^{(r)}) / (B^{(r+1)}/B^{(r)}) \cong Z^{(r+1)}/B^{(r+1)} = E^{(r+1)},
    \]
    where the isomorphism follows from \Cref{lem:subquot-explicit}. To see that the two exact couples are isomorphic, it remains to show that the diagram
    \begin{center}
    \begin{tikzcd}[row sep=25pt, column sep=45pt]
        i(i^r(D)) \ar[d,equal] \ar[r,"(j^{(r)})'"] &(E^{(r)})' = (Z^{(r+1)}/B^{(r)}) / (B^{(r+1)}/B^{(r)}) \ar[d,"\psi"] \ar[r,"(k^{(r)})'"] &i(i^r(D)) \ar[d,equal] \\
        i^{r+1}(D) \ar[r,"j^{(r+1)}"] &E^{(r+1)} = Z^{(r+1)}/B^{(r+1)} \ar[r,"k^{(r+1)}"] &i^{r+1}(D)
    \end{tikzcd}
    \end{center}
    commutes. The map $\psi$ in the middle is isomorphism from \Cref{lem:subquot-explicit} and it can be described by saying that it maps $[[\alpha]_{B^{(r)}}]_{B^{(r+1)}/B^{(r)}} \mapsto [\alpha]_{B^{(r+1)}}$. The right square commutes since 
    \[
    (k^{(r)})'([[\alpha]_{B^{(r)}}]_{B^{(r+1)}/B^{(r)}}) = k^{(r)}([\alpha]_{B^{(r)}}) = k(\alpha) = k^{(r+1)}([\alpha]_{B^{(r+1)}})
    \]
    and the left square commutes because
    \[
    (j^{(r)})'(i(i^r(x))) = [j^{(r)}(i^r(x))]_{B^{(r+1)}/B^{(r)}} = [[j(x)]_{B^{(r)}}]_{B^{(r+1)}/B^{(r)}} \overset{\psi}{\longmapsto} [j(x)]_{B^{(r+1)}} = j^{(r+1)}(i^{r+1}(x)).
    \]
    This completes the proof.
\end{proof}

If in an exact couple we ignore $D$ and focus only on $E$, $E^{(1)}$, $E^{(2)}$, ..., each endowed with the differential defined as $d^{(r)} := j^{(r)} \circ k^{(r)}$, then this yields a spectral sequence. This is stated in the result \cite[Proposition 5.9.2]{weibel1994introduction}, which we recall here without proof.

\begin{proposition}\label{prop:weibel}
    An exact couple $\E$ in which $i$, $j$, and $k$ have bidegrees $(1,-1)$, $(-a,a)$, and $(-1, 0)$ determines a spectral sequence $\{(E^{(r)}_{p,q}, d^{(r)})\}_{r\ge a+1}$ with $d^{(r)}:= j^{(r)} \circ k^{(r)}$ starting at page $a+1$. A morphism of exact couples induces a morphism of the corresponding spectral sequences.
\end{proposition}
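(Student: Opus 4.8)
The plan is to verify the three clauses in the definition of a spectral sequence for the family $\{(E^{(r)},d^{(r)})\}$ and then to check functoriality, leaning on the explicit description of the $r$-th derived couple together with \Cref{lem:iterated-derived-couples}. One bookkeeping point has to be kept in mind throughout: the $r$-th derived couple $\E^{(r)}$ has order $a+r$, and its $E$-term should be placed as the $(a+r+1)$-st page of the spectral sequence; with this reindexing the phrase ``starting at page $a+1$'' merely records that $\E^{(0)}=\E$ furnishes page $a+1$.

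First, the differentials. Since $k^{(r)}\colon E^{(r)}\to i^r(D)$ and $j^{(r)}\colon i^r(D)\to E^{(r)}$, the composite $d^{(r)}:=j^{(r)}\circ k^{(r)}$ is an endomorphism of $E^{(r)}$; from the bidegrees $(-1,0)$ of $k^{(r)}$ and $(-(a+r),a+r)$ of $j^{(r)}$ it has bidegree $(-(a+r+1),a+r)$, which is exactly the bidegree $(-s,s-1)$ demanded of the differential on page $s=a+r+1$. That $d^{(r)}\circ d^{(r)}=0$ is immediate from $k^{(r)}\circ j^{(r)}=0$, i.e. from the exactness inclusion $\im(j^{(r)})\subseteq\ker(k^{(r)})$ in $\E^{(r)}$. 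For the homology clause: by the very definition of the derived couple, the $E$-term of $(\E^{(r)})'$ is $\ker(d^{(r)})/\im(d^{(r)})$, while \Cref{lem:iterated-derived-couples} gives $(\E^{(r)})'\cong\E^{(r+1)}$ (and $\E'=\E^{(1)}$ for $r=0$); hence $E^{(r+1)}\cong\ker(d^{(r)})/\im(d^{(r)})$. Every map used in the derived-couple construction is bigraded, so this isomorphism is bigraded and refines to $E^{(r+1)}_{p,q}\cong\ker(d^{(r)}_{p,q})/\im(d^{(r)}_{p+s,q-s+1})$ with $s=a+r+1$. Together with the trivial fact that the $E^{(r)}_{p,q}$ form a bigraded family of (finite-dimensional) vector spaces, this shows $\{(E^{(r)},d^{(r)})\}$ is a spectral sequence.

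For functoriality, let $(\varphi_D,\varphi_E)\colon\E\to\bar\E$ be a morphism of exact couples, i.e. a pair of maps commuting with $i,j,k$. Then $\varphi_D$ carries $i(D)$ into $\bar i(\bar D)$, and $\varphi_E$ commutes with $d=jk$, hence carries $\ker(d)$ into $\ker(\bar d)$ and $\im(d)$ into $\im(\bar d)$ and so descends to $H(E)\to H(\bar E)$; a one-line diagram chase then shows the induced pair commutes with $i',j',k'$, i.e. is a morphism $\E'\to\bar\E'$. Iterating this — equivalently, checking directly that $\varphi_D$ respects the subspaces $i^r(D)$ and $\varphi_E$ respects $Z^{(r)}$ and $B^{(r)}$ — yields compatible morphisms $\E^{(r)}\to\bar\E^{(r)}$ for all $r$, which by construction commute with the $d^{(r)}$ and are intertwined with the homology isomorphisms above; this is precisely a morphism of the associated spectral sequences. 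I expect no genuine obstacle here: the whole argument is formal, and the one place calling for care is the index bookkeeping — aligning the ``number of derivations'' index with the ``page'' index and confirming that $d^{(r)}$ has the bidegree that lands it on the intended page — which is exactly where the references warn of off-by-one slips.
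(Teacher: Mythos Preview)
The paper does not actually prove this proposition: it explicitly states that the result is recalled from Weibel \cite[Proposition 5.9.2]{weibel1994introduction} without proof. Your argument is correct and is precisely the standard one --- indeed, the paper's preceding material (the explicit description of the $r$-th derived couple and \Cref{lem:iterated-derived-couples}) is set up so that exactly this verification goes through, and your handling of the page-versus-derivation index shift is the right way to resolve the bookkeeping.
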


An exact couple is called \textbf{bounded below} if for each $n$ there exists an integer $f(n)$ such that for $p<f(p+q)$ we have $D_{p,q}=0$. 
The next result is \cite[Classical Convergence Theorem 5.9.7]{weibel1994introduction}. In \Cref{sec:spectral-sequence-of-filtration} we will use it to show that the infinity-page of the spectral sequence of a topological filtration contains information about the homology of the union of all the levels in the filtration.

\begin{theorem}\label{thm:weibel}
    If an exact couple is bounded below, then the spectral sequence is bounded below and converges to $(H_n)_{n \in \Z}$, where $\displaystyle H_n = \lim_{p\to \infty} D_{p,n-p}$ is the direct limit along the maps $i_{p,n-p}\colon D_{p,n-p} \to D_{p+1,n-p-1}$.
\end{theorem}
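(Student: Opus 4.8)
The plan is to run the classical argument in three steps: upgrade boundedness below of the couple to boundedness below of the spectral sequence, show that the page terms $E^{(r)}_{p,q}$ stabilize and describe $E^\infty_{p,q}$ intrinsically, and then equip each $H_n$ with an explicit finite filtration coming from the colimit system, whose associated graded matches $\bigoplus_p E^\infty_{p,n-p}$ summand by summand.

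\emph{Boundedness below and stabilization.} Fix $f$ with $D_{p,q}=0$ for $p<f(p+q)$. Exactness of $\E$ puts each $E_{p,q}$ in an exact sequence $D_{p+a,q-a}\xrightarrow{j}E_{p,q}\xrightarrow{k}D_{p-1,q}$, so $E_{p,q}=0$ as soon as both neighbouring $D$-terms vanish, i.e. whenever $p<g(p+q):=\min\bigl(f(p+q)-a,\ f(p+q-1)+1\bigr)$. Since every $E^{(r)}_{p,q}=Z^{(r)}_{p,q}/B^{(r)}_{p,q}$ is a subquotient of $E_{p,q}$, the bound $g$ works on every page, so the spectral sequence is bounded below. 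For stabilization, fix $(p,q)$: the increasing chain $\ker(i^r)$ inside the finite-dimensional space $D_{p+a,q-a}$ eventually stops, so $B^{(r)}_{p,q}=j\bigl(\ker(i^r)\cap D_{p+a,q-a}\bigr)$ reaches a stable value $B^{(\infty)}_{p,q}$; and since $\im(i^r)\cap D_{p-1,q}=i^r(D_{p-1-r,q+r})$ with $D_{p-1-r,q+r}=0$ once $p-1-r<f(p+q-1)$, the space $Z^{(r)}_{p,q}=k^{-1}\bigl(\im(i^r)\cap D_{p-1,q}\bigr)$ stabilizes to $\ker(k|_{E_{p,q}})$. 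Hence $E^\infty_{p,q}=\ker(k|_{E_{p,q}})\big/B^{(\infty)}_{p,q}$.

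\emph{The filtration on $H_n$.} Let $\eta_p\colon D_{p,n-p}\to H_n$ be the canonical map to the direct limit and set $H^p_n:=\im(\eta_{p+a})$ (the shift by $a$ being forced by the bidegree of $j$). This is an increasing, exhausting filtration with $H^p_n=0$ for $p\ll 0$ by boundedness below; it is finite because, by the standing finite-total-dimension hypothesis, in each fixed total degree the maps $i$ are isomorphisms for $|p|$ large — their kernels and cokernels are respectively an image and a quotient of $E$-terms which then vanish — so the colimit system stabilizes and $H^p_n=H_n$ for $p\gg 0$. For the associated graded, $\ker(\eta_{p+a})=\bigcup_r\bigl(\ker(i^r)\cap D_{p+a,n-p-a}\bigr)$, while $\im(\eta_{p+a-1})=\eta_{p+a}\bigl(\im(i)\cap D_{p+a,n-p-a}\bigr)=\eta_{p+a}\bigl(\ker(j|_{D_{p+a,n-p-a}})\bigr)$ using $\im(i)=\ker(j)$, so
\[
H^p_n/H^{p-1}_n\;\cong\;\im\bigl(j|_{D_{p+a,n-p-a}}\bigr)\,\big/\,j\bigl(\ker(\eta_{p+a})\bigr).
\]
Finally $\im\bigl(j|_{D_{p+a,n-p-a}}\bigr)=\im(j)\cap E_{p,n-p}=\ker(k|_{E_{p,n-p}})$ and $j\bigl(\ker(\eta_{p+a})\bigr)=\bigcup_r B^{(r)}_{p,n-p}=B^{(\infty)}_{p,n-p}$, both by the previous paragraph, which gives $H^p_n/H^{p-1}_n\cong E^\infty_{p,n-p}$ — exactly the isomorphism required for convergence, with $n=p+q$.

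\emph{Main obstacle.} The delicate step is this last identification: one must track the degree shift $a$ consistently, use the explicit model of a colimit of vector spaces to see that $\ker(\eta_p)$ is precisely $\bigcup_r\ker(i^r)$ restricted to the relevant summand of $D$, and thread the three exactness relations of $\E$ through a nested quotient so that the associated graded of the colimit filtration lines up with the stable $Z/B$-description of $E^\infty$. Everything else — the bound $g$, the stabilization of $B^{(r)}$ and $Z^{(r)}$, and the finiteness of the filtration — is routine once the boundedness-below and finite-total-dimension hypotheses are invoked.
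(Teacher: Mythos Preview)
The paper does not prove this theorem at all: it is quoted as \cite[Classical Convergence Theorem 5.9.7]{weibel1994introduction} and left without argument. Your proposal therefore supplies what the paper omits, and it does so correctly, following the standard route from Weibel --- bound the $E$-terms via the exact sequence $D_{p+a,q-a}\to E_{p,q}\to D_{p-1,q}$, use boundedness below to force $Z^{(r)}_{p,q}$ down to $\ker k$ and finite dimensionality to stabilise $B^{(r)}_{p,q}$, then filter $H_n$ by the images of the colimit maps (with the $a$-shift dictated by the bidegree of $j$) and identify the associated graded with $E^\infty$ via the three exactness relations. The one place to watch is your appeal to finite total dimension for the upper end of the filtration and for stabilisation of $B^{(r)}$: this is legitimate here because the paper imposes that hypothesis globally, but it is not part of ``bounded below'' on its own, so it is worth flagging explicitly in the write-up.
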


\subsection{The chain complex of a spectral sequence (via exact couples)}\label{sec:chain-complex-of-spectral-sequence}

We now describe a way to reorganize the algebraic information contained in a spectral sequence to obtain a chain complex, keeping some important properties of the spectral sequence intact. In particular, the vector spaces in the chain complex are isomorphic to the direct sums along the diagonals of the first page and its homology vector spaces are isomorphic to the direct sums along the diagonals of the infinity-page.

Let us consider a spectral sequence $E = \{E^r_{\bullet,\bullet}\}_{r\ge a+1}$ and let us assume that $E$ is induced from an exact couple as in \Cref{prop:weibel}
\begin{center}
    \begin{tikzcd}[row sep=25pt]
        \E\colon &D \ar[rr,"i"] &&D \ar[dl,"j"] \\
        &&E=E^a, \ar[ul,"k"]
    \end{tikzcd}
\end{center}
where the map $j$ has bidegree $(-a,a)$. Recall that $i$ has bidegree $(1,-1)$ and $k$ has bidegree $(-1,0)$. We usually hide the bigradings and bidegrees from the notation and make them explicit only when needed.

We next describe a method to construct, from such a spectral sequence $E$, a chain complex $C_\bullet=C_\bullet(E)$ complex with the following  properties: {\em (i)} $C_n \cong \bigoplus_{p+q=n} E_{p,q}$, and {\em (ii)} $H_n(C_\bullet) \cong H_n = \lim_{p\to \infty} D_{p,n-p}$. 

In \Cref{sec:spec-seq} we have seen the sequence 
\[
0 = B^{(0)} \subseteq B^{(1)} \subseteq B^{(2)} \subseteq \cdots \subseteq Z^{(2)} \subseteq Z^{(1)} \subseteq Z^{(0)} = E,
\]
of subspaces of $E$, defined by
\begin{align*}
    B^{(r)} &:= j(\ker(i^r)),
    &Z^{(r)} &:= k^{-1}(\im(i^r)).
\end{align*}
As $E$ is bigraded, its subspaces $B^{(r)}$ and $Z^{(r)}$ are also bigraded, but we usually hide this from the notation.
Due to finite total dimension, the sequences of $B^{(r)}$ and $Z^{(r)}$ both stabilize, i.e. there exists a number $n$ such that $B^{(r)} = B^{(r+1)}$ and $Z^{(r)} = Z^{(r+1)}$ for all $r\ge n$. This filtration of subspaces induces a (non-canonical) decomposition
\begin{equation}\label{eq:first-page-filtration-iso}
    E^a \cong (B^{(1)}/B^{(0)}) \oplus \cdots \oplus (B^{(n)}/B^{(n-1)}) \oplus (Z^{(n)}/B^{(n)}) \oplus (Z^{(n-1)}/Z^{(n)}) \oplus \cdots \oplus (Z^{(0)}/Z^{(1)}).
\end{equation}
The term non-canonical in this case refers to the isomorphism, for which there may be multiple candidates with no obvious preferred choice. We will use this decomposition together with the following result in order to define a chain complex.

Here is a result that we will need.

\begin{proposition}
    The differentials $d^{(r)}\colon E^{(r)} \to E^{(r)}$ induce isomorphisms $\overline{d}^{(r)}\colon Z^{(r)} / Z^{(r+1)} \overset{\cong}{\longrightarrow} B^{(r+1)} / B^{(r)}$.
\end{proposition}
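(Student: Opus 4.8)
The plan is to unwind the definitions of $d^{(r)} = j^{(r)} \circ k^{(r)}$ on $E^{(r)} = Z^{(r)}/B^{(r)}$ and track where its kernel and image land. Recall from the computation in the proof of \Cref{lem:iterated-derived-couples} that $\ker(d^{(r)}) = Z^{(r+1)}/B^{(r)}$ and $\im(d^{(r)}) = B^{(r+1)}/B^{(r)}$ as subspaces of $E^{(r)} = Z^{(r)}/B^{(r)}$. Hence $d^{(r)}$ factors through an isomorphism from the coimage $E^{(r)}/\ker(d^{(r)}) = (Z^{(r)}/B^{(r)})/(Z^{(r+1)}/B^{(r)})$ onto the image $\im(d^{(r)}) = B^{(r+1)}/B^{(r)}$. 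By \Cref{lem:subquot-explicit} (with $B = B^{(r)} \subseteq X' = Z^{(r+1)} \subseteq X = Z^{(r)}$), the coimage is naturally isomorphic to $Z^{(r)}/Z^{(r+1)}$. Composing these two isomorphisms gives the desired map $\overline{d}^{(r)}\colon Z^{(r)}/Z^{(r+1)} \xrightarrow{\ \cong\ } B^{(r+1)}/B^{(r)}$.

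Concretely, I would define $\overline{d}^{(r)}$ by $\overline{d}^{(r)}([\alpha]_{Z^{(r+1)}}) := d^{(r)}([\alpha]_{B^{(r)}})$ for $\alpha \in Z^{(r)}$, and then check three things. First, well-definedness: if $\alpha \in Z^{(r+1)}$ then $[\alpha]_{B^{(r)}} \in Z^{(r+1)}/B^{(r)} = \ker(d^{(r)})$, so $d^{(r)}([\alpha]_{B^{(r)}}) = 0$; thus the value does not depend on the chosen representative modulo $Z^{(r+1)}$. (One also notes $d^{(r)}$ is defined on all of $Z^{(r)}/B^{(r)}$, so picking any lift $\alpha \in Z^{(r)}$ of a class in $Z^{(r)}/Z^{(r+1)}$ is fine.) Second, surjectivity: by the image computation, every element of $B^{(r+1)}/B^{(r)}$ is of the form $d^{(r)}([\alpha]_{B^{(r)}})$ for some $\alpha \in Z^{(r)}$, which is exactly $\overline{d}^{(r)}([\alpha]_{Z^{(r+1)}})$. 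Third, injectivity: if $\overline{d}^{(r)}([\alpha]_{Z^{(r+1)}}) = 0$ then $[\alpha]_{B^{(r)}} \in \ker(d^{(r)}) = Z^{(r+1)}/B^{(r)}$, which forces $\alpha \in Z^{(r+1)}$, i.e. $[\alpha]_{Z^{(r+1)}} = 0$.

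I do not expect a genuine obstacle here: the entire content is the kernel/image identification for $d^{(r)}$, which is already established in the excerpt (inside the proof of \Cref{lem:iterated-derived-couples}), combined with the purely formal "first isomorphism theorem" fact that any linear map induces an isomorphism from its coimage onto its image, and the "subquotient of a subquotient" identification of \Cref{lem:subquot-explicit}. The only point requiring minor care is bookkeeping the nested quotients consistently — making sure the chain $B^{(r)} \subseteq Z^{(r+1)} \subseteq Z^{(r)}$ is fed into \Cref{lem:subquot-explicit} in the right order so that $(Z^{(r)}/B^{(r)})/(Z^{(r+1)}/B^{(r)}) \cong Z^{(r)}/Z^{(r+1)}$ — but this is exactly the shape of that lemma and causes no difficulty. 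If one prefers, the whole statement can instead be phrased and proved as a direct diagram chase: represent a class in $Z^{(r)}/Z^{(r+1)}$ by $\alpha$ with $k(\alpha) = i^r(x)$, send it to $[j(x)]_{B^{(r)}}$, and verify independence of choices and bijectivity using exactness of $\E$ exactly as in the kernel/image computations already carried out.
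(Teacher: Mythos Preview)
Your proposal is correct and follows essentially the same approach as the paper: invoke the kernel/image identifications $\ker(d^{(r)}) = Z^{(r+1)}/B^{(r)}$ and $\im(d^{(r)}) = B^{(r+1)}/B^{(r)}$ from the proof of \Cref{lem:iterated-derived-couples}, apply the first isomorphism theorem to obtain $E^{(r)}/\ker(d^{(r)}) \cong \im(d^{(r)})$, and then use \Cref{lem:subquot-explicit} to rewrite the coimage as $Z^{(r)}/Z^{(r+1)}$. Your explicit verification of well-definedness, surjectivity, and injectivity is more detailed than the paper's version, but the underlying argument is the same.
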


\begin{proof}
By standard linear algebra arguments, $d^{(r)}$ induces an isomorphism $\hat{d}^{(r)}\colon E^{(r)}/\ker(d^{(r)}) \cong \im(d^{(r)})$. In the proof of \Cref{lem:iterated-derived-couples} we have seen that $\ker(d^{(r)}) = Z^{(r+1)}/B^{(r)}$ and $\im(d^{(r)}) = B^{(r+1)}/B^{(r)}$. Together with the isomorphism $\varphi$ from \Cref{lem:subquot-explicit} we get
\begin{center}
    \begin{tikzcd}
        Z^{(r)} / Z^{(r+1)} \ar[r,"\varphi"] \ar[rr,bend right=20,"=:\overline{d}^{(r)}"] & (Z^{(r)}/B^{(r)})/(Z^{(r+1)}/B^{(r)})   \ar[r,"\hat{d}^{(r)}"] & B^{(r+1)}/B^{(r)},
    \end{tikzcd}
\end{center}
which concludes the proof.
\end{proof}

\begin{proposition}\label{prop:chain-complex-of-spectral-sequence}
    Given a spectral sequence $E$ induced from an exact couple $\E$, we can construct its associated chain complex $C_\bullet(E)$ as follows. We first consider the bigraded vector space
    \[
    (B^{(1)}/B^{(0)}) \oplus \cdots \oplus (B^{(n)}/B^{(n-1)}) \oplus (Z^{(n)}/B^{(n)}) \oplus (Z^{(n-1)}/Z^{(n)}) \oplus \cdots \oplus (Z^{(0)}/Z^{(1)})
    \]
    and endow it with the differential $d$, which is defined on each summand according to the following matrix in block form with $(2n+1)\times(2n+1)$ blocks. (total boundary matrix)
    \[
    \left(\begin{array}{ccc|c|ccc}
	& &    	&0     & &0&\overline{d}^{(n-1)}  \\
    &0 &&\vdots &&\text{\rotatebox{70}{$\ddots$}} \\
    &&&0 &\overline{d}^{(0)}& 0\\ \hline
	0 &\cdots &0 &0 &0 &\cdots &0\\ \hline
     && &0 & && \\		
    &0 & &\vdots &&0 & \\
    &&&0
	\end{array}\right)
    \]
    More explicitly,
    \[
    C_n := \bigoplus_{p+q=n} (B_{p,q}^{(1)}/B_{p,q}^{(0)}) \oplus \cdots \oplus (B_{p,q}^{(n)}/B_{p,q}^{(n-1)}) \oplus (Z_{p,q}^{(n)}/B_{p,q}^{(n)}) \oplus (Z_{p,q}^{(n-1)}/Z_{p,q}^{(n)}) \oplus \cdots \oplus (Z_{p,q}^{(0)}/Z_{p,q}^{(1)})
    \]
    and
    \[
    \partial_n := \bigoplus_{p+q=n} \bigoplus_{r=0}^{n-1} \overline{d}^{(r)}_{p,q} .
    \]
\end{proposition}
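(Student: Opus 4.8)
The goal is twofold: first, check that $\partial$ as defined by the total boundary matrix actually squares to zero, so that $(C_\bullet,\partial)$ is a genuine chain complex; second, compute its homology and identify it with $H_n=\lim_{p\to\infty}D_{p,n-p}$. I would organize the argument around the block structure of the matrix. The only nonzero blocks are the isomorphisms $\overline{d}^{(r)}\colon Z^{(r)}/Z^{(r+1)}\xrightarrow{\cong} B^{(r+1)}/B^{(r)}$ for $r=0,\dots,n-1$, each mapping one summand of the ``$Z$-part'' to one summand of the ``$B$-part'' of the decomposition in \eqref{eq:first-page-filtration-iso}. So as a map of the big direct sum, $\partial$ sends the summand $Z^{(r)}/Z^{(r+1)}$ isomorphically onto $B^{(r+1)}/B^{(r)}$ and kills everything else (the middle summand $Z^{(n)}/B^{(n)}$ and all the $B$-summands).

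For $\partial^2=0$: since the image of $\partial$ lies entirely in the $B$-part of the decomposition (the summands $B^{(1)}/B^{(0)},\dots,B^{(n)}/B^{(n-1)}$), and $\partial$ vanishes identically on all of those summands, we get $\partial\circ\partial=0$ immediately. This is really just the observation that the total boundary matrix is, up to permuting the order of summands, strictly block-upper-triangular with a single nonzero super-diagonal-type band that pairs disjoint sources and targets; no composition of two nonzero blocks is possible. I would spell this out in one or two sentences and also note that $\partial$ respects the bigrading appropriately — each $\overline{d}^{(r)}$ has the bidegree of $d^{(r)}=j^{(r)}k^{(r)}$, namely $(-a-r,a+r-1)$, which has total degree $-1$, so $\partial_n$ indeed maps $C_n\to C_{n-1}$.

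For the homology computation: because each $\overline{d}^{(r)}$ is an isomorphism and the various $\overline{d}^{(r)}$ have pairwise disjoint domains and pairwise disjoint codomains among the summands, the complex $(C_\bullet,\partial)$ splits, as a chain complex (ignoring the grading by $n$ for a moment, or rather fibering over each total degree), into a direct sum of: (a) two-term acyclic pieces $Z^{(r)}/Z^{(r+1)}\xrightarrow{\cong}B^{(r+1)}/B^{(r)}$, which contribute nothing to homology; and (b) the lone summand $Z^{(n)}/B^{(n)}=E^{(n)}$ sitting in the middle with zero differential in and out, which survives entirely. For $n$ large enough this stabilized term $E^{(n)}_{p,q}$ is exactly $E^\infty_{p,q}$. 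Hence $H_n(C_\bullet)\cong\bigoplus_{p+q=n}E^\infty_{p,q}$. To finish, I invoke \Cref{thm:weibel} (the exact couple, being of finite total dimension, is bounded below) together with \Cref{prop:ss-converges-implies-isomorphism} to conclude $\bigoplus_{p+q=n}E^\infty_{p,q}\cong H_n=\lim_{p\to\infty}D_{p,n-p}$, giving property (ii); property (i) is immediate from \eqref{eq:first-page-filtration-iso}.

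The main obstacle — or rather the only point requiring genuine care rather than bookkeeping — is making the ``splitting into acyclic two-term pieces'' rigorous: one must be careful that, within a fixed total degree $n=p+q$, the source summand $Z^{(r)}_{p',q'}/Z^{(r+1)}_{p',q'}$ of $\overline{d}^{(r)}$ living in $C_n$ and its target $B^{(r+1)}_{p'',q''}/B^{(r)}_{p'',q''}$ living in $C_{n-1}$ are indexed by genuinely different $(p,q)$'s (shifted by the bidegree of $d^{(r)}$), so one is really looking at a direct-sum decomposition of the two-term complex $C_n\to C_{n-1}$ compatible across all of $C_\bullet$; once the index bookkeeping is set up, each acyclic piece is literally ``$V\xrightarrow{\cong}V$'' and contributes zero homology, and the remaining middle terms assemble to $\bigoplus E^{(n)}_{p,q}$ with trivial differential. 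I would present this cleanly by first establishing the decomposition abstractly and only then reading off homology, rather than chasing cycles and boundaries directly.
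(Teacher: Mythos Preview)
Your argument for $\partial^2=0$ is correct and matches the paper's approach: both exploit the block structure to see that the image of $\partial$ lands in summands on which $\partial$ vanishes, so no nonzero composition can occur. Your phrasing (``image lies in the $B$-part, and $\partial$ kills the $B$-part'') is in fact cleaner than the paper's somewhat terse remark that the compositions ``are known to square to zero.''

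However, you are proving more than this proposition asks. In the paper, \Cref{prop:chain-complex-of-spectral-sequence} only requires checking that $(C_\bullet,\partial)$ is a chain complex; the identification of its homology with $H_*$ is deferred to the separate \Cref{prop:chain-complex-from-spectral-sequence-homology}, and property (i) (that $C_n\cong\bigoplus_{p+q=n}E_{p,q}$) is handled by equation \eqref{eq:first-page-filtration-iso} in the surrounding text rather than inside the proof. Your homology computation via the splitting into acyclic two-term pieces plus the surviving middle summand $Z^{(n)}/B^{(n)}$ is exactly how the paper proves that later proposition, so nothing is lost---you have simply merged two short propositions into one.

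One minor slip: the bidegree of $d^{(r)}=j^{(r)}k^{(r)}$ is $(-(a+r)-1,\,a+r)$, not $(-a-r,\,a+r-1)$, since $k^{(r)}$ has bidegree $(-1,0)$; the total degree is still $-1$, so this does not affect your argument.
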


\begin{proof}
    We need to check that this is indeed a chain complex, i.e. that the differential squares to zero. In the composition $\partial_{n-1} \circ \partial_n$, anything that could potentially be non-zero is given by compositions of the maps $\overline{d}^{(r)}_{p,q}$, which are induced from the differentials on $E^{(r)}$. These are known to square to zero, implying that $\partial_{n-1} \circ \partial_n =0$.
\end{proof}

\begin{proposition}
    Isomorphic exact couples ${\mathcal E}$ and ${\mathcal E}'$ lead to isomorphic spectral sequences $E$ and $E'$, and hence to isomorphic chain complexes $C_\bullet(E)$ and $C_\bullet(E')$.
\end{proposition}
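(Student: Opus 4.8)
The plan is to unwind the construction of $C_\bullet(-)$ into a finite chain of passages, each of which manifestly respects isomorphisms, and then compose. Concretely, an isomorphism of exact couples $\mathcal E \cong \mathcal E'$ is a pair of bigraded isomorphisms $f_D\colon D \to D'$ and $f_E\colon E \to E'$ commuting with all three structure maps $i,j,k$. First I would observe that such a pair induces isomorphisms on all derived couples $\mathcal E^{(r)} \cong (\mathcal E')^{(r)}$: since the subspaces $B^{(r)} = j(\ker(i^r))$ and $Z^{(r)} = k^{-1}(\im(i^r))$ are defined purely in terms of $i,j,k$, the map $f_E$ carries $B^{(r)}$ isomorphically onto $B'^{(r)}$ and $Z^{(r)}$ onto $Z'^{(r)}$, hence descends to isomorphisms on the subquotients $E^{(r)} = Z^{(r)}/B^{(r)} \to E'^{(r)}$; this is precisely the content of the last sentence of \Cref{prop:weibel}, so it may simply be cited.

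Next I would record that these induced isomorphisms are compatible with the differentials $d^{(r)} = j^{(r)}\circ k^{(r)}$, and therefore with the induced isomorphisms $\overline d^{(r)}\colon Z^{(r)}/Z^{(r+1)} \xrightarrow{\ \cong\ } B^{(r+1)}/B^{(r)}$ of the Proposition preceding \Cref{prop:chain-complex-of-spectral-sequence} — again because $\overline d^{(r)}$ was built from $d^{(r)}$ and the canonical maps $\varphi$, $\widehat d^{(r)}$, all of which are natural. Here one must be slightly careful: the decomposition \eqref{eq:first-page-filtration-iso} of $E^a$ into the $2n+1$ summands $B^{(k)}/B^{(k-1)}$ and $Z^{(k-1)}/Z^{(k)}$ is non-canonical, but the summands themselves — the individual subquotients — are canonical, being subquotients of $E$ by the canonical filtration. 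So the bigraded vector space underlying $C_\bullet(E)$, namely the direct sum of these subquotients, is canonically determined by $\mathcal E$, and $f_E$ induces a bigraded isomorphism of it onto the corresponding sum for $\mathcal E'$. Since the total boundary matrix $\partial$ has entries only the maps $\overline d^{(r)}$ (and zeros), and these correspond under the induced isomorphism, the induced map on underlying bigraded vector spaces intertwines $\partial$ and $\partial'$, hence is a chain isomorphism $C_\bullet(E) \cong C_\bullet(E')$.

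The statement that isomorphic exact couples give isomorphic spectral sequences is already asserted in \Cref{prop:weibel}, so the only real work is the passage from spectral sequence (or rather, from the exact couple together with its canonical filtration data) to chain complex, and this is essentially bookkeeping: every object in sight ($B^{(r)}$, $Z^{(r)}$, their subquotients, the maps $\overline d^{(r)}$, the matrix $\partial$) is functorially attached to the exact couple. I expect the only mild subtlety — and the point worth spelling out in the write-up rather than waving away — is the one flagged above: the isomorphism class of $C_\bullet(E)$ does not depend on the choice made in \eqref{eq:first-page-filtration-iso}, because one should define $C_\bullet(E)$ as the direct sum of the canonical subquotients, not as a chosen splitting of $E^a$; once phrased this way there is literally nothing left to check beyond naturality of each elementary construction, which is immediate from the formulas. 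No convergence or finiteness input is needed here beyond what guarantees the filtration of $E$ stabilizes, which holds under the standing finite-total-dimension assumption.
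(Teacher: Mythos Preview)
Your proposal is correct and follows essentially the same approach as the paper: both argue that an isomorphism of exact couples carries the canonical subspaces $B^{(r)}$ and $Z^{(r)}$ to their counterparts, hence induces isomorphisms on all the subquotients and intertwines the induced differentials $\overline d^{(r)}$, yielding a chain isomorphism. Your write-up is in fact more careful than the paper's, which is quite terse; in particular, your remark that one should regard $C_\bullet(E)$ as the direct sum of the canonical subquotients rather than as a chosen splitting of $E^a$ makes explicit a point the paper leaves implicit.
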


\begin{proof}
    An isomorphism between exact couples induces isomorphisms between their derived couples and hence between their corresponding spectral sequences. An isomorphism between two spectral sequences $E$ and $E'$ restricts to isomorphisms between the corresponding subspaces $B^{(r)} \cong {B'^{(r)}}$ and $Z^{(r)} \cong {Z'^{(r)}}$, thus all of the subquotients involved in the definition of the chain complex are isomorphic and since the differential is induced by the differentials of the spectral sequences, these are isomorphic as well.
\end{proof}

\begin{proposition}\label{prop:chain-complex-from-spectral-sequence-homology}
    Let $C_\bullet(E)$ be the chain complex associated to a spectral sequence $E$ as in \Cref{prop:chain-complex-of-spectral-sequence}. If $E$ converges to $H_*$, then the homology of $C_\bullet(E)$ is isomorphic to $H_*$.     
\end{proposition}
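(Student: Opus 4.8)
**Proof proposal for Proposition (homology of $C_\bullet(E)$ equals $H_*$):**

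The plan is to compute the homology of $C_\bullet(E)$ directly using the explicit block form of the differential, show that it is $\bigoplus_{p+q=n}(Z^{(\infty)}/B^{(\infty)})_{p,q} = \bigoplus_{p+q=n}E^\infty_{p,q}$, and then invoke \Cref{prop:ss-converges-implies-isomorphism} to conclude that this equals $H_n$. First I would unwind the block structure: in each bidegree $(p,q)$, the chain group is a direct sum of the $2n+1$ subquotients $B^{(1)}/B^{(0)},\dots,B^{(n)}/B^{(n-1)}$, then $Z^{(n)}/B^{(n)}$, then $Z^{(n-1)}/Z^{(n)},\dots,Z^{(0)}/Z^{(1)}$, and the total boundary matrix sends the summand $Z^{(r)}/Z^{(r+1)}$ isomorphically (via $\overline{d}^{(r)}$) onto the summand $B^{(r+1)}/B^{(r)}$ and is zero on everything else (in particular zero on $Z^{(n)}/B^{(n)}$, which is the middle block). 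The key point, which must be checked, is that the bidegree of $\overline{d}^{(r)}$ matches that of $d^{(r)}$, namely $(-(a+r),a+r)$, so that $\partial$ does indeed decrease total degree by one and is a legitimate chain-complex differential; this follows because $\overline{d}^{(r)}$ is built from $\varphi$ (a degree-$(0,0)$ isomorphism from \Cref{lem:subquot-explicit}) and $\hat d^{(r)}$ (induced by $d^{(r)}$).

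Once the matrix is understood, the homology computation is essentially bookkeeping. Since each $\overline{d}^{(r)}$ is an isomorphism $Z^{(r)}/Z^{(r+1)} \xrightarrow{\cong} B^{(r+1)}/B^{(r)}$, and the different $\overline{d}^{(r)}$ land in pairwise distinct summands with pairwise distinct sources, the differential $\partial$ is a direct sum (over $r=0,\dots,n-1$ and over the diagonals $p+q=n$) of two-term acyclic complexes $0 \to Z^{(r)}/Z^{(r+1)} \xrightarrow{\overline{d}^{(r)}} B^{(r+1)}/B^{(r)} \to 0$ together with a complex with zero differential supported on the middle summand $Z^{(n)}/B^{(n)}$. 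Hence every $\overline{d}^{(r)}$-pair contributes nothing to homology: its source summand is not in the kernel of $\partial$ (strictly speaking one must note that no further differential hits it, so the kernel in those summands is zero), and its target summand is entirely in the image of $\partial$. What survives is exactly $\bigoplus_{p+q=n} (Z^{(n)}/B^{(n)})_{p,q}$, and by finite total dimension the sequences $B^{(r)}$ and $Z^{(r)}$ have stabilized by stage $n$, so $Z^{(n)}/B^{(n)} = Z^{(\infty)}/B^{(\infty)} = E^\infty$ in each bidegree. Therefore $H_n(C_\bullet(E)) \cong \bigoplus_{p+q=n} E^\infty_{p,q}$.

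To finish, I would recall that $E$ is assumed to converge to $H_*$, so \Cref{prop:ss-converges-implies-isomorphism} gives $H_n \cong \bigoplus_{p+q=n} E^\infty_{p,q}$, and combining the two isomorphisms yields $H_n(C_\bullet(E)) \cong H_n$ for all $n$, as claimed. I expect the main obstacle to be the careful justification that, in the decomposition \eqref{eq:first-page-filtration-iso}, the cross-summand components of $\partial$ really all vanish except for the displayed $\overline{d}^{(r)}$ entries — i.e. that the off-diagonal interactions between the various $B^{(r)}/B^{(r-1)}$ and $Z^{(r)}/Z^{(r+1)}$ pieces are genuinely zero. This is forced by the block form prescribed in \Cref{prop:chain-complex-of-spectral-sequence} (the total boundary matrix has nonzero entries only in the anti-diagonal positions pairing $Z^{(r)}/Z^{(r+1)}$ with $B^{(r+1)}/B^{(r)}$), so once that matrix is taken as the definition, the homology computation decouples cleanly into the acyclic two-term pieces plus the surviving middle term, and no real analytic or homological subtlety remains beyond the degree bookkeeping.
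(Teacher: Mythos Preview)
Your proposal is correct and follows essentially the same approach as the paper's proof: both reduce to \Cref{prop:ss-converges-implies-isomorphism} and then observe that, because the differential is given in block form with the nonzero blocks $\overline{d}^{(r)}$ all isomorphisms, the homology is exactly the untouched middle summand $Z^{(n)}/B^{(n)} = E^\infty$. Your write-up is more detailed (checking bidegrees, spelling out the acyclic two-term pieces), but the underlying argument is the same.
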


\begin{proof}
    By \Cref{prop:ss-converges-implies-isomorphism} it is enough to show that $H_n(C_\bullet(E)) \cong \bigoplus_{p+q=n} E^\infty_{p,q}$. Since the differential on $C_\bullet(E)$ is given in block form, where all the non-zero blocks are isomorphisms, we see that the homology is exactly $Z^{(n)} / B^{(n)}$, which is the only block that is not affected by the differential. By the definition of $Z^{(n)}$ and $B^{(n)}$, this quotient is isomorphic to the infinity-page of $E$.
\end{proof}

\section{The spectral sequence of the unstable manifolds filtration}

In this section we describe the tools that are used to transform dynamical/topological information into algebraic information. In \Cref{sec:cech-homology}, we collect some facts about \v{C}ech homology. In \Cref{sec:homology-computations} we make a few homology computations, which we then use in \Cref{sec:canonical-generators} to understand the relative \v{C}ech homology of two adjacent levels in the filtration described in \Cref{sec:filtration-induced-by-vector-field}. In \Cref{sec:spectral-sequence-of-filtration}, we prove that a topological filtration of compact spaces induces a spectral sequence in \v{C}ech homology.

\subsection{Preliminaries on \v{C}ech homology}\label{sec:cech-homology}

All the topological spaces are assumed to be Hausdorff and we usually do not state this explicitly. The spaces we consider are subsets of manifolds or quotients of Hausdorff spaces by compact subsets and thus again Hausdorff. 

Given a pair of spaces $(X,A)$ we write $\check{H}_k(X,A)$ for its \v{C}ech homology in degree $k$ with coefficients in a field $\Fi$. We write $\check{H}_k(X)$ for $\check{H}_k(X,\emptyset)$. This is defined by taking the inverse limit of the simplicial homology of the nerves of all open coverings of $(X,A)$. See \cite{FoundationsAlgTop} for details. We recall some definitions and list some properties that we are going to use, giving references for proofs. 

The main reason why we use \v{C}ech homology instead of singular homology is that it satisfies a stronger version of the excision axiom, namely invariance under relative homeomorphisms. From this it follows that the \v{C}ech homology for an arbitrary compact pair $(X,A)$ is isomorphic to the reduced one of the quotient space $X/A$. Singular homology has this property only for nice enough pairs ($A$ needs to be a deformation retract of some neighbourhood in $X$), but the pairs appearing in the filtration assigned to a vector field are not always nice in that sense.

\begin{definition}
    A map of pairs $f\colon (X,A) \to (Y,B)$ is called a \textbf{relative homeomorphism} if it maps $X\setminus A$ homeomorphically to $Y\setminus B$.
\end{definition}

The following result is \cite[Chapter X, Lemma 5.2]{FoundationsAlgTop}.

\begin{lemma}
    If $f\colon (X,A) \to (Y,B)$ is a map of compact pairs and $f$ maps $X\setminus A$ bijectively onto $Y\setminus B$, then $f$ is a relative homeomorphism.
\end{lemma}

As an immediate corollary we get that the quotient map that collapses a closed subspace $A$ of a compact space $X$ to a point is a relative homeomorphism.  

\begin{corollary}\label{cor:quotient-rel-homeo}
    If $(X,A)$ is a compact pair, then the quotient map $q\colon (X,A) \to (X/A,\{*\})$ is a relative homeomorphism.
\end{corollary}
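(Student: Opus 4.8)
The plan is to deduce this immediately from the preceding lemma, so the work consists entirely of checking that the quotient map meets the lemma's hypotheses. First I would verify that $(X/A,\{*\})$ is a compact pair: since $X$ is compact and $q$ is continuous and surjective, $X/A = q(X)$ is compact, and the singleton $\{*\}$ is closed because the quotient of a Hausdorff space by a closed (here compact) subspace is again Hausdorff (as noted at the start of \Cref{sec:cech-homology}). Next I would observe that $q$ is a map of pairs: by construction $q(A) = \{*\}$.

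The remaining point is that $q$ maps $X \setminus A$ bijectively onto $(X/A) \setminus \{*\}$. This is built into the definition of the quotient: the equivalence relation collapsing $A$ identifies no two distinct points of $X \setminus A$ with each other, and identifies no point of $X \setminus A$ with a point of $A$, so $q$ restricted to $X \setminus A$ is injective; surjectivity onto $(X/A)\setminus\{*\}$ is clear since every point of $X/A$ other than $*$ is the image of some point not in $A$.

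Having checked these, I would simply apply the lemma (\cite[Chapter X, Lemma 5.2]{FoundationsAlgTop}) to $f = q$ to conclude that $q$ is a relative homeomorphism. There is no real obstacle here; the only thing worth being careful about is the Hausdorffness of $X/A$ needed to guarantee $\{*\}$ is closed and $(X/A,\{*\})$ is genuinely a compact pair in the sense used throughout the paper, and this was already arranged in the standing conventions of \Cref{sec:cech-homology}.
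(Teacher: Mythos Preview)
Your proposal is correct and follows exactly the route the paper intends: the paper states the corollary as an immediate consequence of the preceding lemma without spelling out any details, and what you have written is precisely the verification that the lemma's hypotheses are met (compactness of both pairs, $q$ a map of pairs, and bijectivity of $q$ on the complements). There is nothing to add.
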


The following result is \cite[Chapter X, Theorem 5.4]{FoundationsAlgTop}, stating that \v{C}ech homology is invariant under relative homeomorphisms.

\begin{theorem}\label{thm:rel-homeo-induce-iso}
    If $f\colon (X,A) \to (Y,B)$ is a relative homeomorphism between compact pairs, then the induced map $f_*\colon \check{H}_*(X,A) \to \check{H}_*(Y,B)$ is an isomorphism.
\end{theorem}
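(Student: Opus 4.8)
The plan is to reduce the statement to the special case in which $A$ is collapsed to a point, and then to prove that case by unwinding the inverse-limit definition of \v{C}ech homology. For the reduction, note first that since $f$ is a map of pairs we have $f(A)\subseteq B$, so $f$ descends to a continuous map $\bar f\colon X/A \to Y/B$ carrying the collapsed point to the collapsed point. Because $f$ restricts to a bijection $X\setminus A \to Y\setminus B$, the map $\bar f$ is a continuous bijection; as $X/A$ is compact (a quotient of the compact space $X$) and $Y/B$ is Hausdorff ($B$ being closed in $Y$), $\bar f$ is in fact a homeomorphism, so $\bar f_*$ is an isomorphism by functoriality of \v{C}ech homology. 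By \Cref{cor:quotient-rel-homeo} the quotient maps $q_X\colon (X,A)\to (X/A,\{*\})$ and $q_Y\colon (Y,B)\to (Y/B,\{*\})$ are relative homeomorphisms, and $q_Y\circ f = \bar f\circ q_X$. Hence in the commutative square with horizontal maps $q_{X*},q_{Y*}$, left vertical $f_*$, and right vertical $\bar f_*$, the map $f_*$ is an isomorphism as soon as $q_{X*}$ and $q_{Y*}$ are. So it suffices to treat the case $(Y,B)=(X/A,\{*\})$ with $f$ the quotient map $q$; this is not circular, since the reduction only invokes \Cref{cor:quotient-rel-homeo}, not the conclusion being proved.

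For the quotient case I would work directly with $\check H_*(Z,C)=\varprojlim_{\mathcal U}\, H_*\big(N(\mathcal U),N(\mathcal U_C)\big)$, the inverse limit over open covers of the simplicial homology of the nerve pairs. The key sub-lemma is that the open covers of $X$ of the form $q^{-1}(\mathcal W)$, with $\mathcal W$ an open cover of $X/A$ --- equivalently, the covers of $X$ in which the union of all elements meeting $A$ is a single $q$-saturated open set --- are cofinal among all open covers of $X$. Given an arbitrary finite open cover $\mathcal U$ of $X$ (finite covers being cofinal by compactness), let $W$ be the union of the members meeting $A$; using that $X$ is normal and $A$ compact one chooses an open $V$ with $A\subseteq V\subseteq\overline V\subseteq W$, enlarges $W$ to a saturated open set, and replaces $\mathcal U$ by the refinement consisting of this saturated set together with the members of $\mathcal U$ that miss $A$ with $\overline V$ removed; this produces a saturated refinement of $\mathcal U$. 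For a saturated cover $q^{-1}(\mathcal W)$ the assignment $U\mapsto q(U)$ is a bijection onto $\mathcal W$ that intertwines incidence and carries the subcover meeting $A$ to the subcover containing $*$, so the nerve pair $\big(N(q^{-1}\mathcal W),N((q^{-1}\mathcal W)_A)\big)$ is literally isomorphic to $\big(N(\mathcal W),N(\mathcal W_{*})\big)$, compatibly with refinement maps. Passing to the inverse limit over these two cofinal subsystems then identifies $\check H_*(X,A)$ with $\check H_*(X/A,\{*\})$ via $q_*$.

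The main obstacle is the cofinality step: converting an arbitrary open cover into a saturated refinement must be done carefully so that merging all members meeting $A$ into one saturated open set neither destroys the covering property away from $A$ nor breaks the refinement relation, and this is precisely where normality and compactness of $X$ enter. One must also check that the nerve-pair isomorphisms attached to saturated covers are compatible with the projection maps of the two inverse systems, so that they assemble into an isomorphism of pro-objects and hence of the limits; this compatibility, together with the cofinality, is the technical core of the strong-excision (relative-homeomorphism invariance) property of \v{C}ech theory worked out in \cite[Ch.~X]{FoundationsAlgTop}.
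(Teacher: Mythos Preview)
The paper does not give its own proof of this theorem; it simply quotes it as \cite[Chapter~X, Theorem~5.4]{FoundationsAlgTop}. So there is no in-paper argument to compare against, only the Eilenberg--Steenrod proof referenced there.

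Your reduction step is correct and clean: the induced map $\bar f\colon X/A\to Y/B$ is a continuous bijection from a compact space to a Hausdorff space, hence a homeomorphism, and the square with the two quotient maps commutes; so it does suffice to show that each quotient map $q\colon (X,A)\to (X/A,\{*\})$ induces an isomorphism in \v Cech homology.

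The gap is in the cofinality sub-lemma. Covers of $X$ of the form $q^{-1}(\mathcal W)$ are \emph{not} cofinal among all open covers of $X$. Take $X=[0,1]$, $A=\{0,1\}$, and $\mathcal U=\{[0,\tfrac23),(\tfrac13,1]\}$. Any $q$-saturated open set meeting $A$ must contain both $0$ and $1$, but neither member of $\mathcal U$ does; hence no saturated cover can refine $\mathcal U$. Your proposed construction --- replacing the members of $\mathcal U$ that meet $A$ by their union $W$ --- does yield a saturated cover of $X$, but $W$ need not lie inside any single member of $\mathcal U$, so the result is not a refinement of $\mathcal U$. (Incidentally, the parenthetical ``equivalently, the covers of $X$ in which the union of all elements meeting $A$ is a single $q$-saturated open set'' is also off: a cover $q^{-1}(\mathcal W)$ can have several members meeting $A$, one for each $W\in\mathcal W$ containing $*$, and each of those members contains all of $A$.)

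The Eilenberg--Steenrod argument does work at the level of the inverse system of nerve pairs, but the comparison between the two systems is more delicate than a cofinality statement. One cannot match covers bijectively; instead one produces, for each cover, simplicial maps of nerve pairs in both directions after suitable refinement, compatible up to contiguity, and checks that these assemble into an isomorphism of pro-homology. The point is precisely that for a general cover $\mathcal U$ the subnerve $N(\mathcal U_A)$ may be large, while on the $X/A$ side the subnerve over $\{*\}$ is a cone on a single vertex; the comparison therefore has to pass through relative simplicial homology rather than through an isomorphism of nerve pairs. This is the technical content of \cite[Chapter~X, \S5]{FoundationsAlgTop} that your sketch is gesturing at but does not yet supply.
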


A \textbf{pointed space} $(X,*)$ consists of a topological space $X$ together with a chosen point $* \in X$, called the \textbf{basepoint}. Usually we denote the basepoints of all pointed spaces with the same symbol $*$, unless it is helpful to do otherwise. When considering a quotient space $X/A$, if not otherwise specified, the canonical basepoint is the image of $A$ under the quotient map $X\to X/A$.

The \textbf{wedge sum} $X_1 \vee X_2$ of two pointed spaces $(X_1,*_1)$ and $(X_2,*_2)$ is defined as the quotient space $(X_1\sqcup X_2)/(\{*_1\}\sqcup\{*_2\})$. One can check that this operation is associative, i.e. $(X_1\vee X_2)\vee X_3$ and $X_1 \vee (X_2\vee X_3)$ are naturally homeomorphic. Thus we can define the $n$-fold wedge sum by iteration, with the according choice of the canonical basepoint.

\begin{proposition}\label{prop:homology-iso-wedge-directsum}
    Given compact pointed spaces $(X_1,*_1),\ldots,(X_n,*_n)$, we have 
    \[
    \check{H}_k(X_1 \vee \cdots \vee X_n,*) \cong \check{H}_k(X_1,*_1) \oplus \cdots \oplus \check{H}_k(X_n,*_n).
    \]
\end{proposition}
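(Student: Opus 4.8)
The plan is to reduce the statement about an $n$-fold wedge to the two-fold case by induction, and to prove the two-fold case using the long exact sequence of a pair together with \Cref{thm:rel-homeo-induce-iso}.

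First I would set up the base case $n=2$. Consider the pair $(X_1 \vee X_2, X_1)$, where $X_1$ is included in $X_1 \vee X_2$ as one of the wedge summands (a closed subspace, since the spaces are compact Hausdorff). The quotient $(X_1 \vee X_2)/X_1$ is homeomorphic to $X_2$ (collapsing the copy of $X_1$ identifies the wedge point with the rest of $X_1$, leaving exactly $X_2$ with its basepoint). By \Cref{cor:quotient-rel-homeo} the quotient map $(X_1\vee X_2, X_1) \to ((X_1\vee X_2)/X_1, \{*\})$ is a relative homeomorphism between compact pairs, so by \Cref{thm:rel-homeo-induce-iso} we get $\check{H}_k(X_1\vee X_2, X_1) \cong \check{H}_k(X_2/\{*_2\},\{*\})=\check{H}_k(X_2,*_2)$. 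Now I would invoke the long exact sequence of the compact pair $(X_1\vee X_2, X_1)$ in \v{C}ech homology (valid since \v{C}ech homology with field coefficients on compact pairs satisfies the Eilenberg--Steenrod axioms, per \cite{FoundationsAlgTop}). The inclusion $X_1 \hookrightarrow X_1\vee X_2$ admits a retraction $r\colon X_1\vee X_2 \to X_1$ (collapse $X_2$ to the basepoint), so $\check{H}_k(X_1) \to \check{H}_k(X_1\vee X_2)$ is a split injection; consequently the long exact sequence breaks into short split exact sequences
\[
0 \to \check{H}_k(X_1,*_1) \to \check{H}_k(X_1\vee X_2,*) \to \check{H}_k(X_1\vee X_2, X_1) \to 0,
\]
which combined with the identification of the third term gives $\check{H}_k(X_1\vee X_2,*)\cong \check{H}_k(X_1,*_1)\oplus \check{H}_k(X_2,*_2)$. (Here I am using reduced/pointed \v{C}ech homology, i.e. homology relative to the basepoint; over a field the reduced group is a direct summand of the absolute one, so this is harmless.)

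For the inductive step, assuming the result for $n-1$ summands, I would use the natural homeomorphism $X_1 \vee \cdots \vee X_n \cong (X_1 \vee \cdots \vee X_{n-1}) \vee X_n$ noted in the excerpt, apply the two-fold case to split off $\check{H}_k(X_n,*_n)$, and then apply the inductive hypothesis to the $(n-1)$-fold wedge. One should check that $X_1\vee\cdots\vee X_{n-1}$ is again compact pointed, which is immediate since a finite wedge of compact spaces is compact and Hausdorff.

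The main obstacle — really the only subtle point — is justifying that the long exact sequence of a pair and the splitting argument are available in \v{C}ech homology. For arbitrary pairs \v{C}ech homology need not satisfy exactness, but for compact pairs (which is all we ever use here) it does, and this is exactly the setting of \cite{FoundationsAlgTop}; the retraction $r$ exists at the level of spaces, so functoriality alone gives the splitting without needing any special properties of the homology theory beyond functoriality and the existence of the long exact sequence. An alternative that sidesteps the long exact sequence entirely would be to note that, since everything is compact, $\check{H}_k(X_1\vee\cdots\vee X_n,*)\cong \check{H}_k\big((X_1\sqcup\cdots\sqcup X_n)/(\{*_1\}\sqcup\cdots\sqcup\{*_n\})\big)$ via \Cref{cor:quotient-rel-homeo} and \Cref{thm:rel-homeo-induce-iso}, and then use that \v{C}ech homology is additive on finite disjoint unions together with the long exact sequence of the disjoint basepoint set; but the pair-by-pair argument above is cleaner and I would present that.
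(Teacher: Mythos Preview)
Your argument is correct, but it takes a different route from the paper's. The paper also reduces to $n=2$ by induction, but then handles the base case by observing that the quotient map $(X_1\sqcup X_2,\{*_1\}\sqcup\{*_2\})\to(X_1\vee X_2,*)$ is a relative homeomorphism of compact pairs, applying \Cref{thm:rel-homeo-induce-iso}, and then arguing directly from the definition of \v{C}ech homology (via cofinal covers consisting of sets lying entirely in one summand) that $\check{H}_k(X_1\sqcup X_2,\{*_1\}\sqcup\{*_2\})\cong\check{H}_k(X_1,*_1)\oplus\check{H}_k(X_2,*_2)$. In other words, the paper uses exactly the ``alternative'' you sketch at the end and dismiss as less clean. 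Your approach instead works with the pair $(X_1\vee X_2,X_1)$, identifies the relative group with $\check{H}_k(X_2,*_2)$ via \Cref{thm:rel-homeo-induce-iso}, and splits the long exact sequence (available by \Cref{thm:Cech-homology-exactness-axiom}) using the retraction collapsing $X_2$. Your method is more axiomatic---it uses only functoriality, strong excision, and exactness for compact pairs---whereas the paper's method goes back to the cover-level definition to get additivity on disjoint unions; both are short and valid, and each illuminates a different structural feature of \v{C}ech homology.
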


\begin{proof}
    By an induction argument, it is enough to prove the statement for $n=2$. Consider the quotient map $q\colon (X_1 \sqcup X_2, \{*_1\}\sqcup \{*_2\}) \to (X_1 \vee X_2,*)$. By definition of the wedge sum, $q$ is a relative homeomorphism and hence by \Cref{thm:rel-homeo-induce-iso} induces an isomorphism in \v{C}ech homology. When considering \v{C}ech homology of the disjoint union $X_1\sqcup X_2$, we can restrict to open covers where every open set is a subset of either $X_1$ or $X_2$, since the set of such covers is cofinal \cite[Chapter VIII, Corollary 3.16]{FoundationsAlgTop}. The nerves of such covers are disjoint unions of the corresponding nerves of the covers for $X_1$ and $X_2$ and thus the simplicial homology decomposes as a direct sum. Since inverse limits commute with direct sums, this property stays true also when taking the inverse limit and it follows that $\check{H}_k(X_1\sqcup X_2,\{*_1\} \sqcup \{*_2\}) \cong \check{H}_k(X_1,\{*_1\}) \oplus \check{H}_k(X_2,\{*_2\})$. 
\end{proof}

For nice enough pairs $(X,A)$, \v{C}ech homology agrees with singular homology. Nice enough in this case means \textbf{triangulable}, i.e. there exists a simplicial pair $(K,L)$ and a homeomorphism $h\colon (|K|,|L|) \to (X,A)$.
We now state two results \cite[Chapter IX, Theorem 9.3]{FoundationsAlgTop} and \cite[Chapter VII, Theorem 10.1]{FoundationsAlgTop}, which combined tell us that such a triangulation induces isomorphisms from the \v{C}ech homology of $(X,A)$ to the simplicial homology of $(K,L)$ and from there to the singular homology of $(X,A)$.

\begin{theorem}
    A triangulation $h\colon (|K|,|L|) \to (X,A)$ induces a covering of $(X,A)$ which yields isomorphisms
    \[
    \check{H}_q(X,A;G) \overset{\cong}{\longrightarrow} H_q^\Delta(K,L;G)
    \]
    between the \v{C}ech homology of $(X,A)$ and the simplicial homology of $(K,L)$ for any coefficient group $G$, for all $q$. These isomorphisms commute with the connecting homomorphisms for the long exact sequences of the pairs $(K,L)$ and $(X,A)$, respectively.
\end{theorem}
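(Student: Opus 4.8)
The plan is to reduce the statement to a polyhedral pair and then identify the \v{C}ech homology of a polyhedron with its simplicial homology by producing a cofinal system of open coverings whose nerves recover the complex.

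First I would invoke topological invariance of \v{C}ech homology. A homeomorphism of pairs is in particular a relative homeomorphism of compact pairs, so by \Cref{thm:rel-homeo-induce-iso} the triangulation $h$ induces an isomorphism $\check{H}_q(X,A;G) \cong \check{H}_q(|K|,|L|;G)$, and this isomorphism commutes with the connecting homomorphisms by naturality of the long exact sequences with respect to maps of pairs. Hence it suffices to prove that for a simplicial pair $(K,L)$ there is an isomorphism $\check{H}_q(|K|,|L|;G) \cong H^\Delta_q(K,L;G)$ compatible with $\partial$.

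For each $n\ge 0$ let $\mathcal{U}_n$ be the open covering of $(|K|,|L|)$ by the open stars of the vertices of the $n$-th barycentric subdivision $K^{(n)}$, the vertices lying in $|L|$ giving the covering of the subspace; the covering $\mathcal{U}_0$ is "the covering induced by the triangulation" of the statement. Two facts drive the argument. (a) The nerve of $\mathcal{U}_n$ is canonically simplicially isomorphic to $K^{(n)}$, and the relative nerve to $(K^{(n)},L^{(n)})$, because a family of open stars has nonempty common intersection exactly when the corresponding vertices span a simplex. (b) The coverings $\mathcal{U}_n$ are cofinal in the directed set of all open coverings of $(|K|,|L|)$: the mesh of $K^{(n)}$ tends to $0$ and $|K|$ is compact, so a Lebesgue-number argument shows every open covering is refined by some $\mathcal{U}_n$. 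A projection $\mathcal{U}_{n+1}\to \mathcal{U}_n$ (sending a vertex of $K^{(n+1)}$ to a vertex of $K^{(n)}$ whose open star contains its own) induces on homology, through the identification in (a), precisely the barycentric subdivision homomorphism; this is an isomorphism on simplicial homology and is independent, up to contiguity, of the chosen projection, so the \v{C}ech inverse system is well defined on homology. Restricting to the cofinal subsystem gives $\check{H}_q(|K|,|L|;G)=\varprojlim_{\mathcal{U}} H_q(N(\mathcal{U});G)=\varprojlim_n H^\Delta_q(K^{(n)},L^{(n)};G)$, an inverse limit of a system all of whose bonding maps are isomorphisms, hence isomorphic to $H^\Delta_q(K,L;G)$ via the projection to any single term.

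For the compatibility with $\partial$, at each level the long exact sequence of $(K^{(n)},L^{(n)})$ in simplicial homology is exact and the subdivision homomorphisms intertwine these sequences; since all bonding maps are isomorphisms, the three inverse systems are essentially constant, so the exactness issue that normally obstructs passage to the limit in \v{C}ech homology disappears and the limiting connecting map is the simplicial $\partial$ of $(K,L)$ read through the identifications above. Combining this with the first paragraph yields the claimed commuting isomorphisms. I expect the main obstacle to be fact (a) together with verifying that the bonding maps of the \v{C}ech system realize barycentric subdivision on homology: this needs care with the combinatorics of open stars and with the fact that projections of coverings are only defined up to contiguity. The cofinality estimate (b) is routine given compactness.
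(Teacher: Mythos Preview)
The paper does not prove this theorem; it merely cites it as \cite[Chapter~IX, Theorem~9.3]{FoundationsAlgTop}. Your proposal is a correct outline of essentially the classical Eilenberg--Steenrod argument: transfer to $(|K|,|L|)$ by functoriality, exhibit the open-star coverings of the iterated barycentric subdivisions as a cofinal subsystem whose nerves are the subdivisions themselves, and observe that the bonding maps are isomorphisms so the inverse limit collapses to any single term.

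Two small points. First, invoking \Cref{thm:rel-homeo-induce-iso} for the reduction step is heavier than needed and mildly awkward, since that result is quoted from the same source as the theorem you are proving; it is cleaner to note that $h$ is an honest homeomorphism of pairs and use bare functoriality of \v{C}ech homology. Second, your description of the bonding map as ``the barycentric subdivision homomorphism'' is slightly off in direction: the \v{C}ech projection $N(\mathcal{U}_{n+1})\to N(\mathcal{U}_n)$, under the nerve identifications, is a simplicial approximation to the identity $|K^{(n+1)}|\to|K^{(n)}|$, hence a map \emph{from} the subdivision \emph{to} the original complex, not the subdivision chain map $C_\bullet(K^{(n)})\to C_\bullet(K^{(n+1)})$. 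On homology it is the inverse of the subdivision isomorphism, so your conclusion that the inverse system is essentially constant, and your handling of the connecting homomorphisms, are unaffected.
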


\begin{theorem}
    A triangulation $h\colon (|K|,|L|) \to (X,A)$ induces isomorphisms 
    \[
    H_q^\Delta(K,L;G) \overset{\cong}{\longrightarrow} H_q(X,A;G)
    \]
    from the simplicial homology of $(K,L)$ to the singular homology of $(X,A)$ over any coefficient group $G$, for all $q$. These isomorphisms commute with the connecting homomorphisms for the long exact sequences of the pairs $(K,L)$ and $(X,A)$, respectively.
\end{theorem}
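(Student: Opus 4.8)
The plan is to build an explicit natural chain-level comparison between simplicial and singular chains and to prove it is a quasi-isomorphism by induction over skeleta, finally transporting the conclusion along the triangulation $h$. Concretely, I would first fix a total order on the vertex set of $K$; this restricts to an order on the vertices of each simplex and of each simplex of $L$. For a $q$-simplex $\sigma$ of $K$ with ordered vertices $v_0<\dots<v_q$, the affine map $\Delta^q\to|K|$ sending the $i$-th barycentric vertex to $v_i$ is a singular $q$-simplex, and extending $G$-linearly produces a chain map $\lambda\colon C^\Delta_*(K;G)\to C_*(|K|;G)$ into the singular chain complex. Since $\lambda$ sends $C^\Delta_*(L;G)$ into $C_*(|L|;G)$, it descends to a chain map of pairs $C^\Delta_*(K,L;G)\to C_*(|K|,|L|;G)$, natural with respect to simplicial inclusions, and hence to a natural transformation $\lambda_*\colon H^\Delta_q(K,L;G)\to H_q(|K|,|L|;G)$.

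Next I would show $\lambda_*$ is an isomorphism, in three stages. \emph{Stage one (skeletal pairs):} for the $k$- and $(k-1)$-skeleta, the relative simplicial chain complex $C^\Delta_*(K^{(k)},K^{(k-1)};G)$ is concentrated in degree $k$, free on the set of $k$-simplices, so $H^\Delta_q$ vanishes for $q\neq k$ and is free on the $k$-simplices for $q=k$; on the singular side $(|K^{(k)}|,|K^{(k-1)}|)$ is a good (CW) pair whose quotient is a wedge of $k$-spheres indexed by the $k$-simplices, so by excision and the standard computation of the singular homology of such a wedge (the analogue of \Cref{prop:homology-iso-wedge-directsum}) the relative singular homology has the same description, and one checks directly that $\lambda$ carries a $k$-simplex generator to a generator (up to sign). \emph{Stage two (finite-dimensional $K$):} induct on $\dim K$, feeding Stage one and the inductive hypothesis into the ladder of long exact sequences of the pairs $(K^{(k)},K^{(k-1)})$, connected by $\lambda$, and invoking the five lemma to conclude $\lambda_*\colon H^\Delta_q(K^{(k)};G)\to H_q(|K^{(k)}|;G)$ is an isomorphism for every $k$, hence for $K$; the base case $k=0$ is immediate. \emph{Stage three (general $K$):} write $K$ as the direct limit of its finite subcomplexes; simplicial chains are finite sums and every singular simplex has compact image hence factors through a finite subcomplex, so both homology theories commute with this filtered colimit, $\lambda$ is compatible with it, and $\lambda_*$ is therefore an isomorphism for arbitrary $K$ (in our application $K$ is finite, since the triangulated spaces are compact, so this step can be skipped there).

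Having the absolute isomorphism for $K$ and for $L$, I would apply the five lemma once more to the ladder formed by the long exact sequences of the pair $(K,L)$ and of $(|K|,|L|)$, joined by $\lambda$, to obtain the relative isomorphism $\lambda_*\colon H^\Delta_q(K,L;G)\overset{\cong}{\to}H_q(|K|,|L|;G)$; commutation with the connecting homomorphisms is automatic because $\lambda$ is a chain map of pairs, so it induces a morphism of the two long exact sequences. Finally, since the triangulation $h\colon(|K|,|L|)\to(X,A)$ is a homeomorphism of pairs, $h_*$ is an isomorphism on singular homology commuting with $\partial$, and the composite $H^\Delta_q(K,L;G)\xrightarrow{\lambda_*}H_q(|K|,|L|;G)\xrightarrow{h_*}H_q(X,A;G)$ is the desired natural isomorphism.

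The main obstacle is Stage one: verifying that $\lambda_*$ is an isomorphism on $(K^{(k)},K^{(k-1)})$. This is where one must pass from the relative singular homology to the reduced singular homology of the wedge of spheres $|K^{(k)}|/|K^{(k-1)}|$ via the good-pair form of excision, and then check, keeping careful track of orientations and of the chosen vertex order, that the affine model of a $k$-simplex really represents a generator of the corresponding sphere's homology. Everything else (the five-lemma ladders, the filtered-colimit argument, the transport along $h$) is routine once this local identification is in place.
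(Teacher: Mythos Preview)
Your proposal is correct and follows the standard textbook argument (essentially the one in Hatcher, \S 2.1): build the natural chain map $\lambda$ from ordered simplicial to singular chains, verify it is a quasi-isomorphism on skeletal pairs, climb the skeleta by the five lemma, and then pass to the relative case and transport along $h$. The identification in Stage one is indeed the only point requiring care, and your description of it is accurate.

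However, note that the paper does \emph{not} give its own proof of this statement. It is quoted verbatim as \cite[Chapter VII, Theorem 10.1]{FoundationsAlgTop} and explicitly introduced with ``which we recall here without proof.'' So there is nothing to compare your argument against: your proposal supplies a proof where the paper deliberately defers to Eilenberg--Steenrod. If anything, your route is closer to the modern presentation than to the original axiomatic treatment in that reference, which proceeds via the uniqueness theorem for homology theories on triangulable pairs rather than by an explicit chain-level comparison.
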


By combining these two results, we get that \v{C}ech homology is isomorphic to singular homology for triangulable pairs.

\begin{corollary}\label{cor:cech-sing-iso}
    If $(X,A)$ is a triangulable pair, then there is a natural isomorphism $H_*(X,A) \cong \check{H}_*(X,A)$, commuting with the connecting homomorphisms from the long exact sequences.
\end{corollary}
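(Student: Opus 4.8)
The plan is to simply compose the two isomorphisms furnished by the two theorems immediately preceding the corollary. Given a triangulable pair $(X,A)$, by definition there is a simplicial pair $(K,L)$ and a homeomorphism $h\colon (|K|,|L|) \to (X,A)$. First I would apply the first of those theorems to the triangulation $h$ to obtain, for every $q$, an isomorphism $\check{H}_q(X,A) \overset{\cong}{\longrightarrow} H_q^\Delta(K,L)$ from the \v{C}ech homology of $(X,A)$ to the simplicial homology of $(K,L)$. Then I would apply the second theorem to the same triangulation to obtain an isomorphism $H_q^\Delta(K,L) \overset{\cong}{\longrightarrow} H_q(X,A)$ onto the singular homology of $(X,A)$. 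Composing these two gives the desired isomorphism $\check{H}_*(X,A) \cong H_*(X,A)$.

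Next I would verify compatibility with the connecting homomorphisms. Each of the two theorems states that its isomorphisms commute with the connecting homomorphisms of the long exact sequences of $(K,L)$ and of $(X,A)$; chaining these two commuting squares shows that the composite commutes with the connecting homomorphisms of the long exact sequences of the pair $(X,A)$ in \v{C}ech and in singular homology. In other words, the composite is an isomorphism of the two long exact sequences of $(X,A)$, with $(K,L)$ serving only as an intermediary that disappears from the final statement.

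The only point that takes a little more thought is the adjective \emph{natural}: one should check that the composite does not depend on the auxiliary choice of the triangulation $h$ and is compatible with (continuous) maps of triangulable pairs. This is where I expect the only mild obstacle to lie, and it is handled by the functoriality already built into the two cited theorems — the open covering of $(X,A)$ induced by a triangulation, and the resulting comparison maps on homology, are natural in the simplicial pair, so a subdivision or a different triangulation yields the same map on homology. Granting that standard fact (which is part of the development in \cite{FoundationsAlgTop}), the existence of the isomorphism is then immediate and the proof is complete.
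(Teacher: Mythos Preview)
Your proposal is correct and mirrors the paper's own argument: the paper simply states that the corollary follows by combining the two preceding theorems, i.e.\ by composing the isomorphisms $\check{H}_*(X,A)\cong H_*^\Delta(K,L)\cong H_*(X,A)$, with compatibility with connecting homomorphisms inherited from each factor. Your additional remarks on naturality go slightly beyond what the paper spells out but are in the right spirit.
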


Denote by $X^\star$ the \textbf{one-point compactification} of a space $X$. 
See either \cite{FoundationsAlgTop} or \cite{MunkresTopology} for the precise definition. 
When considering $X^\star$ as a pointed space, we choose the newly added point $*$, often called the \textbf{point at infinity}, as the basepoint. 
We repeat the statement \cite[Chapter X, Lemma 6.3]{FoundationsAlgTop}, which gives a criterion to check when a space is homeomorphic to a one-point compactification.

\begin{lemma}\label{lem:cts-extension-rel-homeo}
    Let $X,Y$ be locally compact, $A\subseteq X$ a closed subset and $f\colon X\setminus A \to Y$ a proper map (i.e. inverse images of compact sets under $f$ are compact). Define $\overline{f}\colon X \to Y^\star$ by setting $\overline{f}(x)=f(x)$ for $x \in X\setminus A$ and $f(a)=*$ for $a \in A$. Then, $\overline{f}$ is a continuous extension of $f$. 
    If $f$ is a homeomorphism, then $\overline{f}$ is a relative homeomorphism between the pairs $(X,A)$ and $(Y^\star,\{*\})$.
    If, moreover, $X$ is compact, and $A$ is a single point, then $\overline{f}$ is a homeomorphism.
\end{lemma}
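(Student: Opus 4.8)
The plan is to check the three assertions in turn; the only step requiring real thought is the continuity of $\overline{f}$ at the points of $A$, and that is where I expect the (minor) obstacle to lie. First I would handle continuity and the extension property. On the open subset $X \setminus A$ the map $\overline{f}$ coincides with $f$, hence is continuous there and is an extension of $f$ by construction, so it remains to check continuity at a point $a \in A$, where $\overline{f}(a) = *$. Here I would use the standard description of a neighbourhood basis of the point at infinity in $Y^\star$: the sets $U_K := (Y \setminus K) \cup \{*\}$ with $K \subseteq Y$ compact. Given such a $U_K$, properness of $f$ gives that $f^{-1}(K)$ is compact, hence closed in the Hausdorff space $X$ (recall all spaces here are Hausdorff), and it is disjoint from $A$. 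Thus $V := X \setminus f^{-1}(K)$ is an open neighbourhood of $a$, and unwinding definitions gives $\overline{f}(V) \subseteq U_K$: a point of $V \cap A$ maps to $* \in U_K$, while a point of $V \setminus A$ lies outside $f^{-1}(K)$ and so maps into $Y \setminus K \subseteq U_K$. This proves continuity at $a$.

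For the second assertion, if $f$ is a homeomorphism $X\setminus A \to Y$, then $\overline{f}$ is a continuous map of pairs $(X,A) \to (Y^\star, \{*\})$ whose restriction to $X \setminus A$ is exactly $f$, a homeomorphism onto $Y^\star \setminus \{*\} = Y$; by the definition of a relative homeomorphism this is precisely the claim. For the third, assume in addition that $X$ is compact and $A = \{a_0\}$ is a single point. Then $\overline{f}$ is a bijection, since it restricts to the bijection $f\colon X \setminus \{a_0\} \to Y = Y^\star \setminus \{*\}$ and sends $a_0$ to $*$. It is continuous by the first part, its source $X$ is compact, and its target $Y^\star$ is Hausdorff because $Y$ is locally compact Hausdorff; a continuous bijection from a compact space to a Hausdorff space is a homeomorphism, so $\overline{f}$ is one.

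The crux of the argument — and the only place anything could go wrong — is the continuity computation at $A$, which hinges on combining the explicit neighbourhood basis at $*$ with the properness of $f$ to see that the preimage of a basic neighbourhood of $*$ is the complement of a compact, hence closed, subset of $X$ not meeting $A$. The remaining two parts are then formal: the relative-homeomorphism statement drops straight out of the definition, and the homeomorphism statement follows from the standard fact that a continuous bijection from a compact space to a Hausdorff space is closed.
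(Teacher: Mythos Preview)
Your proof is correct. The paper does not actually give its own proof of this lemma; it merely quotes the statement from Eilenberg--Steenrod's \emph{Foundations of Algebraic Topology} (Chapter~X, Lemma~6.3), so there is nothing to compare against beyond noting that your argument is exactly the standard one: use the basic open sets $(Y\setminus K)\cup\{*\}$ at infinity together with properness for continuity, read off the relative-homeomorphism claim from the definition, and invoke the compact-to-Hausdorff bijection criterion for the last part.
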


Using this lemma, we can prove that for compact pairs $(X,A)$, collapsing $A$ to a point is the same as first removing $A$ and then adding a point at infinity.

\begin{proposition}\label{prop:quotient-compactification}
    If $(X,A)$ is a pair of compact Hausdorff spaces, then $X/A \cong  (X\setminus A)^\star$.
\end{proposition}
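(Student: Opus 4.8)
The plan is to apply \Cref{lem:cts-extension-rel-homeo} directly, with its data instantiated as follows: take the compact space in the lemma to be $X/A$, the closed subset to be the single point $* := q(A)$ where $q\colon X \to X/A$ is the quotient map, the target space to be $Y = X\setminus A$, and the map $f\colon (X/A)\setminus\{*\} \to X\setminus A$ to be the canonical bijection induced by $q$.

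First I would verify that this $f$ is a homeomorphism. By \Cref{cor:quotient-rel-homeo}, $q$ is a relative homeomorphism, so $q$ restricts to a homeomorphism $X\setminus A \to (X/A)\setminus\{*\}$, and $f$ is precisely its inverse. Next I would check the remaining hypotheses of \Cref{lem:cts-extension-rel-homeo}. Since $X$ is compact Hausdorff and $A$ is closed in $X$ (being compact in a Hausdorff space), the quotient $X/A$ is again compact Hausdorff; in particular it is locally compact, its subset $(X/A)\setminus\{*\}$ is open and hence locally compact Hausdorff, and the homeomorphism $f$ between locally compact Hausdorff spaces is automatically proper. Thus all hypotheses of the lemma hold; moreover $X/A$ is compact and $\{*\}$ is a single point, so the final clause of \Cref{lem:cts-extension-rel-homeo} yields that the extension $\overline{f}\colon X/A \to (X\setminus A)^\star$ is a homeomorphism, which is exactly the claim.

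The only mildly delicate points are the point-set facts invoked along the way: that a compact subset of a Hausdorff space is closed, that collapsing a closed subset of a compact Hausdorff space produces a compact Hausdorff space, and that a homeomorphism between locally compact Hausdorff spaces is proper. None of these presents a genuine obstacle — the first is standard, the second follows from normality of $X$, and the third is immediate since homeomorphisms preserve compactness in both directions. So I expect the proof to be short; the main care needed is simply in matching the bookkeeping of \Cref{lem:cts-extension-rel-homeo} to the situation at hand.

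If one prefers to avoid relying on $X/A$ being Hausdorff, an alternative route is to apply \Cref{lem:cts-extension-rel-homeo} with the compact space $X$, closed subset $A$, target $Y = X\setminus A$, and $f$ the identity map of $X\setminus A$ (which is proper and a homeomorphism); this produces a continuous map $\overline{f}\colon X \to (X\setminus A)^\star$ collapsing $A$ to the point at infinity, which factors through a continuous bijection $X/A \to (X\setminus A)^\star$, and this bijection is a homeomorphism because $X/A$ is compact and $(X\setminus A)^\star$ is Hausdorff.
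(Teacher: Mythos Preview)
Your primary approach is essentially the same as the paper's: both apply \Cref{lem:cts-extension-rel-homeo} with ambient space $X/A$, closed subset the single point $\{*\}$, and $f$ the canonical identification $(X/A)\setminus\{*\}\cong X\setminus A$, then invoke the final clause. The paper verifies that $f$ is a homeomorphism by a direct comparison of subspace and quotient topologies rather than via \Cref{cor:quotient-rel-homeo}, but this is a cosmetic difference.
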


\begin{proof}
    Let $f\colon X\setminus A \to X\setminus A$ be the identity map, where the domain is considered as a subspace of $X/A$ and the target as a subspace of $X$. By tracing the definitions of the subset topology and the quotient topology, one can verify that the open sets are the same in both spaces, namely open subsets of $X$ disjoint from $A$ (since $A$ is closed), and thus $f$ is a homeomorphism. Since $X$ is compact, also $X/A$ is compact. Therefore, it follows from \Cref{lem:cts-extension-rel-homeo} that the map $\overline{f}\colon X/A \to (X\setminus A)^\star$, which is the identity on $X\setminus A$ and sends the point $A$ to $*$, is a homeomorphism.
\end{proof}

A disadvantage of using \v{C}ech homology is that the long sequence of homology groups of a pair $(X,A)$ may not be exact. By \Cref{cor:cech-sing-iso}, the sequence is exact for a triangulable pair, but we are interested also in pairs that are not triangulable. Luckily, since we are working with closed sets of compact manifolds, we are only considering compact pairs. Thanks to this and to the fact that we are working with field coefficients, we do indeed get long exact sequences of pairs, as stated in \cite[Chapter IX, Theorem 7.6]{FoundationsAlgTop}, which we repeat here for convenience.

\begin{theorem}\label{thm:Cech-homology-exactness-axiom}
    If $(X,A)$ is a compact pair and $G$ is a vector space over a field $\Fi$, then the long sequence in \v{C}ech homology with coefficients in $G$ associated to the pair $(X,A)$ is exact.
\end{theorem}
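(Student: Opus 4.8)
The plan is to obtain the long exact sequence in the limit from the simplicial long exact sequences of the nerves of finite open covers, using that --- over a field --- the inverse-limit functor is exact on the inverse systems that occur.

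First I would fix the inverse system. Since $X$ (hence the pair $(X,A)$) is compact, the finite open covers are cofinal among all open covers, so $\check H_*(X,A;G)$ is the inverse limit, over the directed set $\Lambda$ of finite open covers $\mathcal U$ of $X$ ordered by refinement, of the simplicial homologies $H_q^{\Delta}\bigl(N(\mathcal U),N(\mathcal U_A);G\bigr)$, where $\mathcal U_A$ is the induced cover of $A$ and $N(-)$ is the nerve. For each $\mathcal U\in\Lambda$, $N(\mathcal U)$ is a finite simplicial complex, so --- at least when $\dim_\Fi G<\infty$, in particular for the coefficients $G=\Fi$ used throughout the paper, which is the case I would treat --- these homologies are finite-dimensional $\Fi$-vector spaces, and the simplicial long exact sequence of the pair $\bigl(N(\mathcal U),N(\mathcal U_A)\bigr)$ is exact. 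A refinement $\mathcal V\ge\mathcal U$ induces on homology a well-defined map --- independent of the chosen refinement projection, since distinct projections are contiguous --- compatible with the connecting homomorphisms; thus the simplicial long exact sequences assemble into an inverse system of long exact sequences indexed by $\Lambda$.

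Next I would pass to the limit. The key lemma is that any inverse system of finite-dimensional $\Fi$-vector spaces over a directed set satisfies the Mittag--Leffler condition: for fixed $\lambda$ the subspaces $\im\bigl(V_\mu\to V_\lambda\bigr)$ with $\mu\ge\lambda$ form a downward-directed family inside the finite-dimensional $V_\lambda$, hence stabilize. Consequently $\lim^1$ vanishes for such systems and $\lim$ is exact on their short exact sequences. I would then chop each long exact sequence of the inverse system into short exact sequences $0\to\im\to\bigl(\,\cdot\,\bigr)\to\im\to0$ of inverse systems; every term is a subquotient of a finite-dimensional space, hence again finite-dimensional, so by the lemma each of these short exact sequences remains short exact after applying $\lim$. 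Splicing them back together shows that the limit of the long exact sequences is exact. Since taking $\lim$ over $\Lambda$ commutes with the individual terms of the sequence and, by cofinality of the finite covers, reproduces \v{C}ech homology, this limit is precisely the long sequence of the pair $(X,A)$ in \v{C}ech homology, which is therefore exact.

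The one genuinely delicate point --- indeed the reason \v{C}ech homology is only exact under hypotheses like these --- is that exactness is not preserved by inverse limits in general; here it is rescued by the Mittag--Leffler condition, which in turn relies on both the field coefficients and the compactness of the pair (so that the nerves are finite and the homology groups finite-dimensional). A secondary technicality to watch is that refinement projections between nerves are not canonical, so one must check that they nevertheless organize the simplicial long exact sequences into a genuine inverse system, and that restricting the limit to the cofinal subsystem of finite covers leaves it unchanged.
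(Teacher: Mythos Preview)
The paper does not actually prove this theorem: it is quoted verbatim from Eilenberg--Steenrod, \emph{Foundations of Algebraic Topology}, Chapter~IX, Theorem~7.6, and no argument is given in the paper beyond the citation. So there is no ``paper's proof'' to compare against; your proposal supplies what the paper deliberately outsources.

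That said, your outline is essentially the standard argument and is correct in spirit. Compactness gives cofinality of finite covers, so the nerves are finite complexes; with field coefficients (and you rightly restrict to $G=\Fi$, which is all the paper ever uses) the simplicial homologies are finite-dimensional; the long exact sequences of the nerve pairs assemble into an inverse system; and exactness survives the inverse limit. One caveat worth flagging: you phrase the last step as ``Mittag--Leffler $\Rightarrow \varprojlim{}^{1}=0$'', but that implication in its textbook form is usually stated for \emph{countable} towers, whereas the directed set of finite covers of a general compact Hausdorff space need not have a countable cofinal subset. The conclusion is still true---the cleaner fact, and the one Eilenberg--Steenrod effectively use, is that inverse limits over directed sets are exact on the category of vector spaces over a field (equivalently, one passes to the stabilized-image subsystem, whose transition maps are surjections between finite-dimensional spaces, and argues directly). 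Your image-stabilization remark already contains the germ of this; just be aware that invoking $\varprojlim{}^{1}$ without comment over an uncountable index set would need justification. The contiguity/cofinality technicalities you flag at the end are genuine but routine, exactly as you say.
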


\subsection{The spectral sequence in \v{C}ech homology of a topological filtration}\label{sec:spectral-sequence-of-filtration}

It is well-known that every filtration of a topological space induces a spectral sequence in homology. We reprove this result for \v{C}ech homology in the case of compact spaces, using the preliminary results from \Cref{sec:cech-homology}. Note that additional to the spaces in the filtration being compact, we are making use of the fact that we are working with field coefficients, in order to guarantee the existence of the long exact sequences in \v{C}ech homology.

\begin{proposition}\label{prop:spectral-sequence-from-filtration}
    If $\emptyset \subseteq L_0 \subseteq L_1 \subseteq \cdots \subseteq L_n = M$ is a filtration of a compact topological space $M$, such that each $L_p$ is compact, then this determines a spectral sequence $\{E_{\bullet,\bullet}^r\}_{r\ge 1}$ whose first page is given by $E^1_{p,q} = \check{H}_{p+q}(L_p,L_{p-1})$ and that converges to $\check{H}_*(M)$.
\end{proposition}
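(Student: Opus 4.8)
The strategy is to build an exact couple from the long exact sequences of the pairs $(L_p, L_{p-1})$ in \v{C}ech homology, apply the machinery of Section~\ref{sec:turning-ss-into-ch}, and identify the limit. First I would set $D_{p,q} := \check{H}_{p+q}(L_p)$ and $E^1_{p,q} := \check{H}_{p+q}(L_p,L_{p-1})$, using the convention $L_p = \emptyset$ for $p<0$ and $L_p = M$ for $p \ge n$. The maps $i\colon D_{p,q}\to D_{p+1,q-1}$ are induced by the inclusions $L_p \hookrightarrow L_{p+1}$, the maps $j\colon D_{p,q}\to E^1_{p,q}$ are the maps $\check{H}_{p+q}(L_p)\to \check{H}_{p+q}(L_p,L_{p-1})$ from the long exact sequence, and $k\colon E^1_{p,q}\to D_{p-1,q}$ is the connecting homomorphism $\check{H}_{p+q}(L_p,L_{p-1})\to \check{H}_{p+q-1}(L_{p-1})$. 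One checks the bidegrees are $(1,-1)$, $(0,0)$, and $(-1,0)$ respectively, so this is an exact couple of order $0$; exactness of the couple at each of the three spots is exactly the exactness of the long exact sequence of the pair $(L_p,L_{p-1})$, which holds by \Cref{thm:Cech-homology-exactness-axiom} since each $(L_p,L_{p-1})$ is a compact pair. Then \Cref{prop:weibel} produces a spectral sequence starting at page $1$ with $E^1_{p,q} = \check{H}_{p+q}(L_p,L_{p-1})$ as required.

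Next I would verify the boundedness hypotheses needed to invoke \Cref{thm:weibel}. The filtration has finite length, so $D_{p,q} = 0$ for $p<0$ and $D_{p,q}$ stabilizes (equals $\check H_{p+q}(M)$) for $p\ge n$; in particular the exact couple is bounded below with $f(n)=0$. It also has finite total dimension in the sense of the paper: \v{C}ech homology of a compact subset of a manifold is finite-dimensional in each degree and vanishes outside a bounded range of degrees (these spaces have finite covering dimension), and only finitely many pairs $(p,q)$ with $p$ in the range $0,\dots,n$ contribute in any given total degree. Then \Cref{thm:weibel} tells us the spectral sequence is bounded below and converges to $H_* $ with $H_n = \lim_{p\to\infty} D_{p,n-p}$, and this direct limit is simply $\check{H}_n(L_n) = \check{H}_n(M)$, because the maps $i$ in the system are eventually identities (for $p\ge n$, $L_p = L_{p+1} = M$). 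Thus $E \implies \check{H}_*(M)$.

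The routine part is the diagram-chasing that the three maps are well-defined graded maps of the stated bidegrees and that the triangle is exact; this is the standard construction of the spectral sequence of a filtered space, transported verbatim to \v{C}ech homology once \Cref{thm:Cech-homology-exactness-axiom} guarantees the long exact sequences. The one point that genuinely requires the hypotheses of this paper — and which I expect to be the main thing to get right — is the finite-total-dimension/boundedness bookkeeping: one must be careful that compactness of each $L_p$ (so each pair $(L_p,L_{p-1})$ is compact) is what licenses both the exactness axiom and the finite-dimensionality, and that finiteness of the filtration length is what makes the couple bounded below and the direct limit defining $H_n$ actually terminate at $\check H_n(M)$. Everything else follows formally from \Cref{prop:weibel} and \Cref{thm:weibel}.
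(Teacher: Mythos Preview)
Your proposal is correct and follows essentially the same route as the paper: build the exact couple with $D_{p,q}=\check H_{p+q}(L_p)$, $E_{p,q}=\check H_{p+q}(L_p,L_{p-1})$, maps of bidegree $(1,-1),(0,0),(-1,0)$ coming from the long exact sequences of the pairs (exactness via \Cref{thm:Cech-homology-exactness-axiom}), then invoke \Cref{prop:weibel} and \Cref{thm:weibel}, identifying the direct limit with $\check H_*(M)$ since the filtration is eventually constant. The only minor discrepancy is that you justify finite total dimension by appealing to properties of compact subsets of a manifold, whereas the proposition as stated concerns an arbitrary compact $M$; the paper simply works under its standing finite-total-dimension assumption and does not re-argue it here.
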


\begin{proof}
    Let $n:=p+q$. Since we are using field coefficients and the spaces $L_p$ are compact, by \Cref{thm:Cech-homology-exactness-axiom}, for each pair $(L_p,L_{p-1})$ there exists a long exact sequence in \v{C}ech homology 
    \begin{center}
        \begin{tikzcd}
            \cdots \ar[r] &\check{H}_n(L_{p-1}) \ar[r,"i"] &\check{H}_n(L_p) \ar[r,"j"] &\check{H}_n(L_p,L_{p-1}) \ar[r,"k"] &\check{H}_{n-1}(L_{p-1}) \ar[r] &\cdots,
        \end{tikzcd}
    \end{center}
    where $i$ is the map induced in \v{C}ech homology from the inclusion $L_{p-1} \hookrightarrow L_p$, $j$ is the map induced in \v{C}ech homology from the inclusion of pairs $(L_p,\emptyset) \hookrightarrow (L_p,L_{p-1})$, and $k$ is the connecting homomorphism from \v{C}ech homology. These maps of course depend on $n$ and $p$ and we will endow them with indices later.
    Considering all of these long exact sequences at once, we can arrange them next to each other in the following way. Note that the long exact sequence of the pair $(L_0,\emptyset)$ consists only of the equalities $\check{H}_n(L_0,\emptyset) = \check{H}_n(L_0)$ since the homology of the empty set is zero.
\begin{center}
\begin{tikzcd}
    &\cdots \ar[drr,dotted] & &\cdots \ar[drr,dashed] \\
    \check{H}_1(L_0,\emptyset) &\check{H}_1(L_0) \ar[l, "="{above}] \ar[drr,dotted] &\check{H}_2(L_1,L_0) \ar[l,dotted] &\check{H}_2(L_1) \ar[l,dotted] \ar[drr,dashed] &\check{H}_3(L_2,L_1) \ar[l,dashed] &\check{H}_3(L_2) \ar[l,dashed] &\cdots \ar[l] \\
    \check{H}_0(L_0,\emptyset) &\check{H}_0(L_0) \ar[l, "="{above}] \ar[drr,dotted] &\check{H}_1(L_1,L_0) \ar[l,dotted] &\check{H}_1(L_1) \ar[l,dotted] \ar[drr,dashed] &\check{H}_2(L_2,L_1) \ar[l,dashed] &\check{H}_2(L_2) \ar[l,dashed] &\cdots \ar[l] \\
                       &0                 &\check{H}_0(L_1,L_0) \ar[l,dotted] &\check{H}_0(L_1) \ar[l,dotted] \ar[drr,dashed] &\check{H}_1(L_2,L_1) \ar[l,dashed] &\check{H}_1(L_2) \ar[l,dashed] &\cdots \ar[l] \\
                       &                  &                    &0                        &\check{H}_0(L_2,L_1) \ar[l,dashed] &\check{H}_0(L_2) \ar[l,dashed] &\cdots \ar[l]
\end{tikzcd}
\end{center}
    Let us formally extend this diagram to infinity in all directions by declaring that $L_p=\emptyset$ for all $p<0$ and $L_p=L_n$ for all $p>n$, and by using the convention that homology is zero in negative degrees.
    We now define $D_{p,q} = \check{H}_{p+q}(L_p)$ and $E_{p,q} = \check{H}_{p+q}(L_p,L_{p-1})$ and endow $i$, $j$, $k$ with indices such that
    \begin{align*}
        i_{p,q}\colon &\underbrace{\check{H}_{n}(L_p)}_{=D_{p,q}} \longrightarrow \underbrace{\check{H}_{n}(L_{p+1})}_{=D_{p+1,q-1}},  
        &j_{p,q}\colon &\underbrace{\check{H}_{n}(L_p)}_{=D_{p,q}} \longrightarrow \underbrace{\check{H}_{n}(L_p,L_{p-1})}_{=E_{p,q}}, \\[1ex]
        k_{p,q}\colon &\underbrace{\check{H}_{n}(L_p,L_{p-1})}_{=E_{p,q}} \longrightarrow \underbrace{\check{H}_{n-1}(L_{p-1})}_{=D_{p-1,q}}.
    \end{align*}
    Note that $D_{p,q}$ and $E_{p,q}$ are defined for all $p,q \in \Z$, but many of the terms are zero, namely $D_{p,q}=0$ if $p<0$ or $q<-p$, and $E_{p,q}=0$ if $p<0$ or $p>n$ or $q<-p$. In terms of $D_{p,q}$ and $E_{p,q}$ the diagram from before now looks as follows.
    \begin{center}
    \begin{tikzcd}
        &\cdots \ar[drr,dotted] & &\cdots \ar[drr,dashed] \\
        E_{0,1} &D_{0,1} \ar[l, "="{above}] \ar[drr,dotted] &E_{1,1} \ar[l,dotted] &D_{1,1} \ar[l,dotted] \ar[drr,dashed] &E_{2,1} \ar[l,dashed] &D_{2,1} \ar[l,dashed] &\cdots \ar[l] \\
        E_{0,0} &D_{0,0} \ar[l, "="{above}] \ar[drr,dotted] &E_{1,0} \ar[l,dotted] &D_{1,0} \ar[l,dotted] \ar[drr,dashed] &E_{2,0} \ar[l,dashed] &D_{2,0} \ar[l,dashed] &\cdots \ar[l] \\
                        &0                 &E_{1,-1} \ar[l,dotted] &D_{1,-1} \ar[l,dotted] \ar[drr,dashed] &E_{2,-1} \ar[l,dashed] &D_{2,-1} \ar[l,dashed] &\cdots \ar[l] \\
                        &                  &                    &0                        &E_{2,-2} \ar[l,dashed] &D_{2,-2} \ar[l,dashed] &\cdots \ar[l]
    \end{tikzcd}
    \end{center}
    In order to display all of this information more compactly, we define the vector spaces $D=\bigoplus_{p,q}D_{p,q}$ and $E=\bigoplus_{p,q}E_{p,q}$ and the linear maps $i=\bigoplus_{p,q}i_{p,q}$, $j=\bigoplus_{p,q}j_{p,q}$ and $k=\bigoplus_{p,q}k_{p,q}$. The result is an exact couple 
    \begin{center}
        \begin{tikzcd}[row sep=25pt]
            \E\colon &D \ar[rr,"i"] &&D \ar[dl,"j"] \\
            &&E \ar[ul,"k"]
        \end{tikzcd}
    \end{center}
    where $i,j,k$ have the bidegrees $(1,-1), (0,0), (-1,0)$, respectively. 
    The exactness of this couple follows immediately from the exactness of the long exact sequence in homology for each pair $(L_p,L_{p-1})$, since exactness is preserved under direct sums.
    By \Cref{prop:weibel} we thus get a spectral sequence starting at page $1$. Moreover, since this exact couple is bounded, we can apply \Cref{thm:weibel}, which tells us that the spectral sequence is bounded below and converges to the direct limit $\lim_{p\to \infty} D_{p,n-p}$. By the definition of the $D_{p,q}$, this is equal to $\check{H}_n(M)$.
\end{proof}

\subsection{Preparatory homology computations}\label{sec:homology-computations}

In this section we compute the relative \v{C}ech homology groups of the unstable manifolds of fixed points and closed orbits with respect to their boundaries. This will be helpful in the next section for endowing the relative \v{C}ech homology of the pairs coming from the filtration. Recall that by \Cref{lem:unstable-manifold-homeomorphic}, the unstable manifold of a fixed point of index $k$ is homeomorphic to $\R^k$ and the unstable manifold of a closed orbit of index $k$ is homeomorphic to $\R^k \times S^1$. 

We first treat the easier case of the unstable manifold of a fixed point.

\begin{corollary}\label{cor:relative-cech-homology-unstable-fixpt}
    If $\beta$ is a fixed point of index $k\ge 0$, then 
    \[
    \check{H}_r(\overline{W^u(\beta)},\partial W^u(\beta)) \cong \begin{cases}
        \Fi, &\text{if } $r=k$, \\
        0, &\text{otherwise}.
    \end{cases}
    \]
\end{corollary}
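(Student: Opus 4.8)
The plan is to collapse the boundary, recognise the resulting quotient as a sphere, and read off its homology.

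First I would note that both $\overline{W^u(\beta)}$ and $\partial W^u(\beta)=\overline{W^u(\beta)}\setminus W^u(\beta)$ are closed subsets of the compact manifold $M$, hence compact, so $(\overline{W^u(\beta)},\partial W^u(\beta))$ is a compact pair. By \Cref{cor:quotient-rel-homeo} the quotient map onto $(\overline{W^u(\beta)}/\partial W^u(\beta),\{*\})$ is a relative homeomorphism, so \Cref{thm:rel-homeo-induce-iso} gives
\[
\check{H}_r\big(\overline{W^u(\beta)},\partial W^u(\beta)\big)\;\cong\;\check{H}_r\big(\overline{W^u(\beta)}/\partial W^u(\beta),\{*\}\big).
\]

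Next I would identify the quotient. By \Cref{prop:quotient-compactification}, $\overline{W^u(\beta)}/\partial W^u(\beta)\cong\big(\overline{W^u(\beta)}\setminus\partial W^u(\beta)\big)^\star=W^u(\beta)^\star$. By \Cref{lem:unstable-manifold-homeomorphic} we have $W^u(\beta)\cong\R^k$, and since one-point compactification is a homeomorphism invariant of locally compact Hausdorff spaces, $W^u(\beta)^\star\cong(\R^k)^\star\cong S^k$, with the point at infinity as basepoint (for $k=0$ this reads $S^0$, a two-point space, and the degenerate case $\partial W^u(\beta)=\emptyset$ causes no trouble). Thus the relative homology in question is isomorphic to $\check{H}_r(S^k,\{*\})$. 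Since $(S^k,\{*\})$ is triangulable, \Cref{cor:cech-sing-iso} lets me replace this by the singular relative homology $H_r(S^k,\{*\})$, which via the long exact sequence of the pair equals the reduced singular homology of $S^k$: this is $\Fi$ for $r=k$ and $0$ otherwise, exactly as claimed.

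I do not expect a genuine obstacle here; the only points needing a little care are verifying the compactness hypotheses required to invoke the \v{C}ech excision results (immediate, since everything lives in the closed manifold $M$) and handling the index-$0$ case, where $W^u(\beta)$ is a single point and the formula must still return $\Fi$ in degree $0$.
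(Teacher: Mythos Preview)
Your argument is correct and follows exactly the same chain of isomorphisms as the paper's proof: collapse the boundary via \Cref{cor:quotient-rel-homeo} and \Cref{thm:rel-homeo-induce-iso}, identify the quotient with the one-point compactification via \Cref{prop:quotient-compactification}, use \Cref{lem:unstable-manifold-homeomorphic} to recognise $S^k$, and switch to singular homology via \Cref{cor:cech-sing-iso}. The only differences are that you spell out the compactness verification and the $k=0$ edge case, which the paper leaves implicit.
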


\begin{proof}
    This follows from
    \[
    \check{H}_r(\overline{W^u(\beta)},\partial W^u(\beta)) 
    \cong \check{H}_r(\overline{W^u(\beta)}/\partial W^u(\beta),*) 
    \cong \check{H}_r(W^u(\beta)^\star,*)
    \cong \check{H}_r(S^k,*)
    \cong H_r(S^k,*).
    \]
    The first and second isomorphisms follow from \Cref{thm:rel-homeo-induce-iso} together with \Cref{cor:quotient-rel-homeo} and \Cref{prop:quotient-compactification}, respectively. Note that $\overline{W^u(\beta)}$ is compact as it is a closed subset of $M$, which is compact. The third isomorphism holds by \Cref{lem:unstable-manifold-homeomorphic} and because $(\R^k)^\star \cong S^k$. The fourth isomorphism follows from \Cref{cor:cech-sing-iso}. 
\end{proof}

Now we want to do the analogous computation for the unstable manifold of a closed orbit. This is more difficult because the one-point compactification of $\R^k\times S^1$ is a more complicated space. We therefore use a different compactification, namely $D^k\times S^1$, where $D^k$ denotes the closed unit disk of dimension $k$. The "boundary" of this compactification is given by $S^{k-1}\times S^1$, so eventually we want to compute the relative homology of the pair $(D^k\times S^1,S^{k-1}\times S^1)$. We start by computing the homology of $S^{k-1}\times S^1$.

\pagebreak
\begin{lemma}\label{lem:homology-of-sphere} The following isomorphisms hold:
    \begin{enumerate}[(i)]
        \item\label{item:homology-of-sphere1} $H_r(S^0\times S^1) \cong \begin{cases}
            \Fi \oplus \Fi, &\text{if } r=0,1,\\
            0, &\text{otherwise}.
        \end{cases}$ 
        \item\label{item:homology-of-sphere2} $H_r(S^1\times S^1) \cong \begin{cases}
            \Fi, &\text{if } r=0,2,\\
            \Fi\oplus\Fi, &\text{if } r=1,\\
            0, &\text{otherwise}.
        \end{cases}$
        \item\label{item:homology-of-sphere3} For $k\ge 3$, we have $H_r(S^{k-1}\times S^1) \cong \begin{cases}
            \Fi, &\text{if } r=0,1,k-1,k,\\
            0, &\text{otherwise}.
        \end{cases}$
    \end{enumerate}
\end{lemma}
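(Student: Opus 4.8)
The plan is to apply the Künneth theorem for singular homology. Since we work over a field $\Fi$, there are no $\mathrm{Tor}$ contributions, so for spaces $X,Y$ of the type at hand (smooth manifolds, in particular CW complexes) there is an isomorphism
\[
H_r(X\times Y) \cong \bigoplus_{i+j=r} H_i(X)\otimes_\Fi H_j(Y).
\]
The statement is phrased directly in terms of singular homology $H_r$, so no passage between theories is needed; but should one want to re-read it as a statement about \v{C}ech homology, $S^{k-1}\times S^1$ is triangulable and hence \Cref{cor:cech-sing-iso} applies.

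First I would record the homology of the factors: $H_*(S^1)\cong\Fi$ in degrees $0,1$ and $0$ otherwise; for $k\ge 2$, $H_*(S^{k-1})\cong\Fi$ in degrees $0$ and $k-1$ and $0$ otherwise; and $H_*(S^0)\cong\Fi\oplus\Fi$ in degree $0$ and $0$ otherwise. For part \eqref{item:homology-of-sphere1} one can either feed $H_0(S^0)=\Fi\oplus\Fi$ into Künneth, or more directly observe that $S^0\times S^1\cong S^1\sqcup S^1$ and use additivity of homology over disjoint unions. For parts \eqref{item:homology-of-sphere2} and \eqref{item:homology-of-sphere3}, substitute the factor homologies into the Künneth formula: the nonzero summands come exactly from index pairs $(i,j)$ with $i\in\{0,k-1\}$ and $j\in\{0,1\}$, hence are concentrated in total degrees $0$, $1$, $k-1$, $k$, each contributing one copy of $\Fi$.

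The only point requiring care is whether these four total degrees are distinct. For $k\ge 3$ we have $0<1<k-1<k$, so each degree receives a single $\Fi$, yielding \eqref{item:homology-of-sphere3}. For $k=2$ the values $1$ and $k-1=1$ coincide, so degree $1$ collects two copies of $\Fi$ while degrees $0$ and $2$ each collect one, yielding \eqref{item:homology-of-sphere2}. I expect no real obstacle in this argument; the only thing to watch is this bookkeeping of coinciding degrees in the low-dimensional cases, together with the separate (and easier) treatment of $S^0$ when it is disconnected.
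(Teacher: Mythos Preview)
Your proposal is correct and follows essentially the same approach as the paper: apply the K\"unneth formula over the field $\Fi$ and read off the result from the known homology of the spheres. The paper's proof is in fact terser than yours, stating only the K\"unneth isomorphism and leaving the degree bookkeeping (including the coincidence $1=k-1$ when $k=2$ and the disconnected case $k=1$) to the reader.
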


\begin{proof}
    For any $k\ge 1$, the homology of the product $S^{k-1}\times S^1$ can be computed by using the Künneth formula, yielding
    \[
    H_r(S^{k-1}\times S^1) \cong  \bigoplus_{i+j=r} H_i(S^{k-1}) \otimes H_j(S^1) .
    \]
    From this, the result follows using the well-known homology groups of the spheres.
\end{proof}

Now we are ready to compute the relative homology of the pair $(D^k\times S^1,S^{k-1}\times S^1)$. Note that here we are using singular homology, but we could also use \v{C}ech homology instead, since these pairs are triangulable and thus the two are isomorphic.

\begin{lemma}\label{lem:relative-homology-disk-sphere-circle}
    For any $k\ge 1$, we have
    \[
    H_r(D^k\times S^1,S^{k-1}\times S^1) \cong \begin{cases}
        \Fi, &\text{if } r=k,k+1, \\
        0, &\text{otherwise}.
    \end{cases}
    \]
\end{lemma}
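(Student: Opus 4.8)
The plan is to use the long exact sequence of the pair $(D^k \times S^1, S^{k-1} \times S^1)$ in homology, feeding in the homology of $S^{k-1} \times S^1$ computed in \Cref{lem:homology-of-sphere} and the homology of $D^k \times S^1$, which is that of $S^1$ since $D^k$ is contractible and the Künneth formula applies. So we have $H_r(D^k \times S^1) \cong \Fi$ for $r = 0, 1$ and $0$ otherwise. The long exact sequence reads
\[
\cdots \to H_r(S^{k-1}\times S^1) \to H_r(D^k\times S^1) \to H_r(D^k\times S^1, S^{k-1}\times S^1) \to H_{r-1}(S^{k-1}\times S^1) \to \cdots .
\]

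For $r$ outside the range where any of the three adjacent absolute groups are nonzero, the relative group vanishes immediately, so the work is concentrated in low degrees and near degree $k$. First I would note that the inclusion $S^{k-1}\times S^1 \hookrightarrow D^k \times S^1$ is, up to homotopy, the map $S^{k-1}\times S^1 \to \{pt\}\times S^1$, i.e. projection onto the second factor followed by the inclusion of a circle; hence on $H_0$ and $H_1$ it is an isomorphism (this uses that $S^{k-1}$ is connected for $k \ge 2$, and for $k=1$ that $S^0 \times S^1$ has its $H_0$ and $H_1$ mapping onto those of $S^1$ — actually for $k=1$ one should check the two components map to the one component, giving a surjection on $H_0$ with kernel $\Fi$, and similarly on $H_1$). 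Then in degrees $0$ and $1$ the map $H_r(S^{k-1}\times S^1)\to H_r(D^k\times S^1)$ being surjective forces the connecting maps into those degrees to be zero, and injectivity (for $k\ge 2$) forces the relative groups $H_1$ and $H_2$ to be controlled purely by the top part of the sequence. The cleanest route is to split into the three cases $k=1$, $k=2$, $k\ge 3$ matching the three cases of \Cref{lem:homology-of-sphere}, and in each case just chase the finitely many nonzero entries of the long exact sequence; alternatively, observe $(D^k\times S^1)/(S^{k-1}\times S^1) \cong (D^k/S^{k-1})\wedge S^1_+ \simeq S^k \vee (S^k \wedge S^1) = S^k \vee S^{k+1}$ after smashing, but the long-exact-sequence argument is more self-contained given what has been established.

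The main obstacle I anticipate is bookkeeping in the middle degrees when $k \le 2$, where the ``top'' classes of $S^{k-1}\times S^1$ (in degrees $k-1$ and $k$) collide with the ``bottom'' classes (in degrees $0$ and $1$): for $k=1$ all four relevant degrees of $S^0\times S^1$ coincide with $\{0,1\}$, and for $k=2$ the degree $k-1 = 1$ class of $S^1 \times S^1$ overlaps the degree-$1$ part. In these cases I would argue that the connecting homomorphism $H_{r}(D^k\times S^1,S^{k-1}\times S^1) \to H_{r-1}(S^{k-1}\times S^1)$ has image exactly the classes coming from the $S^{k-1}$ factor (the ``fundamental class of the fiber sphere'' directions), which are precisely the classes killed by the inclusion into $D^k\times S^1$, and use that the whole long exact sequence is one of $H_*(S^1)$-modules (or just split it by the section $S^1 \to D^k\times S^1$, $S^1 \to S^{k-1}\times S^1$) to reduce to the relative homology of $(D^k, S^{k-1})$ tensored with $H_*(S^1)$, which by Künneth for the pair gives $H_*(D^k,S^{k-1})\otimes H_*(S^1)$; since $H_r(D^k,S^{k-1}) \cong \Fi$ for $r=k$ and $0$ otherwise, tensoring with $H_*(S^1) \cong \Fi$ in degrees $0,1$ yields exactly $\Fi$ in degrees $k$ and $k+1$ and $0$ elsewhere, as claimed. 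This Künneth-for-pairs argument is in fact the shortest path and avoids the case analysis entirely, so I would present that as the proof and relegate the long-exact-sequence computation to a remark if at all.
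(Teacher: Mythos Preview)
Your proposal is correct. The long-exact-sequence route you sketch first is exactly what the paper does: it treats $k\ge 3$ in detail (using that the inclusion $S^{k-1}\times S^1\hookrightarrow D^k\times S^1$ induces isomorphisms on $H_0$ and $H_1$, so the two nonzero blocks of the sequence are separated) and then waves at $k=1,2$ with the remark that ``the two non-zero parts of the long exact sequence overlap'' --- precisely the bookkeeping obstacle you anticipated.

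Your preferred argument via the relative K\"unneth formula, $H_r(D^k\times S^1,S^{k-1}\times S^1)\cong\bigoplus_{i+j=r}H_i(D^k,S^{k-1})\otimes H_j(S^1)$, is a genuinely different route and a better one: it is a one-line computation valid uniformly for all $k\ge 1$, so it eliminates the case split entirely. The paper's approach has the minor advantage of being completely self-contained given only \Cref{lem:homology-of-sphere} and the long exact sequence axiom, whereas yours invokes the K\"unneth theorem for pairs; but since the paper already uses K\"unneth to prove \Cref{lem:homology-of-sphere}, that is no real cost. Your smash-product observation $(D^k\times S^1)/(S^{k-1}\times S^1)\cong S^k\wedge S^1_+\cong S^k\vee S^{k+1}$ is also correct and gives yet another quick proof.
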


\begin{proof}
    We use the long exact sequence in homology of the pair $(D^k\times S^1,S^{k-1}\times S^1)$. We start with the case $k\ge 3$. By \Cref{lem:homology-of-sphere}, the homology of $S^{k-1}\times S^1$ is non-zero only in degrees $0,1,k-1,k$, while $D^k\times S^1$ is homotopy equivalent to $S^1$ and thus has non-zero homology only in degrees $0,1$. In these non-zero degrees the homology is always isomorphic to $\Fi$. One can check that the embedding $S^{k-1}\times S^1 \hookrightarrow D^k \times S^1$ induces isomorphisms in homology in degrees $0$ and $1$. Thus the relevant parts of the long exact sequence are as follows:
    \begin{center}
    \begin{tikzcd}
        \cdots \ar[r] &0 \ar[r] &H_{k+1}(D^k\times S^1,S^{k-1}\times S^1) \ar[ddll] \\
        \\
        \underbrace{H_k(S^{k-1}\times S^1)}_{\cong \Fi} \ar[r] &0 \ar[r] &H_k(D^k\times S^1,S^{k-1}\times S^1) \ar[ddll] \\
        \\
        \underbrace{H_{k-1}(S^{k-1}\times S^1)}_{\cong \Fi} \ar[r] &0 \ar[r] &H_{k-1}(D^k\times S^1,S^{k-1}\times S^1) \ar[dl] \\
        &\text{\raisebox{-2pt}{\rotatebox{18}{$\cdots$}}} \ar[dl] \\
        \underbrace{H_1(S^{k-1}\times S^1)}_{\cong \Fi} \ar[r,"\cong"] &\underbrace{H_1(D^k\times S^1)}_{\cong \Fi} \ar[r] &H_1(D^k\times S^1,S^{k-1}\times S^1) \ar[ddll] \\
        \\
        \underbrace{H_0(S^{k-1}\times S^1)}_{\cong \Fi} \ar[r,"\cong"] &\underbrace{H_0(D^k\times S^1)}_{\cong \Fi} \ar[r] &H_0(D^k\times S^1,S^{k-1}\times S^1) \ar[ddll] \\
        \\
        \hspace{30pt}0  
    \end{tikzcd}
    \end{center}
    It now follows from exactness of the sequence that $H_r(D^k\times S^1,S^{k-1}\times S^1)=0$ for $r\neq k,k+1$ and also that the connecting homomorphism in degrees $k+1$ and $k$ is an isomorphism, hence completing the proof for the case $k\ge 3$.
    
    The cases $k=1$ and $k=2$ can be done analogously, only that the two non-zero parts of the long exact sequence overlap.
\end{proof}

The computations we just did are useful because of the following result, stating that there exists a relative homeomorphism between the pair $(D^k\times S^1,S^{k-1}\times S^1)$ and the unstable manifold of a closed orbit with its boundary collapsed to a point.

\begin{proposition}\label{prop:unstable-circle-relative-homeo}
    If $\beta$ is a closed orbit of index $k\ge 1$, then there exists a relative homeomorphism from $(D^k\times S^1,S^{k-1}\times S^1)$ to $(\overline{W^u(\beta)}/\partial W^u(\beta),*)$.
\end{proposition}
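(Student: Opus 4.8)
The plan is to build the relative homeomorphism $(D^k\times S^1, S^{k-1}\times S^1) \to (\overline{W^u(\beta)}/\partial W^u(\beta),*)$ by exhibiting a homeomorphism on the complements of the subspaces and showing it is proper, then invoking \Cref{lem:cts-extension-rel-homeo}. By \Cref{prop:quotient-compactification} we have $\overline{W^u(\beta)}/\partial W^u(\beta) \cong (W^u(\beta))^\star$, so the target pair is essentially $((W^u(\beta))^\star, *)$, and by \Cref{lem:unstable-manifold-homeomorphic} we have $W^u(\beta)\cong \R^k\times S^1$. Hence it suffices to produce a proper homeomorphism from the interior $\mathrm{int}(D^k)\times S^1$ onto $\R^k\times S^1$ and verify that \Cref{lem:cts-extension-rel-homeo} applies with $X = D^k\times S^1$, $A = S^{k-1}\times S^1$, $Y = \R^k\times S^1$.

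First I would fix a homeomorphism $h\colon \mathrm{int}(D^k) \to \R^k$ (e.g. $x\mapsto x/(1-|x|)$) and take $f = h\times \mathrm{id}_{S^1}\colon \mathrm{int}(D^k)\times S^1 \to \R^k\times S^1$, which is clearly a homeomorphism. Second, I would check that $f$ is proper: a compact subset $K\subseteq \R^k\times S^1$ is contained in $B_R(0)\times S^1$ for some $R$, and $f^{-1}(B_R(0)\times S^1) = h^{-1}(B_R(0))\times S^1$, which is closed in $D^k\times S^1$ and hence compact; since $f^{-1}(K)$ is a closed subset of this, it is compact. Third, I would apply \Cref{lem:cts-extension-rel-homeo} with $X = D^k\times S^1$ (compact, in particular locally compact), $A = S^{k-1}\times S^1$ (closed in $X$), and $Y = \R^k\times S^1$ (locally compact): since $f$ is a proper homeomorphism of $X\setminus A$ onto $Y$, the extension $\overline f\colon X \to Y^\star$ sending $A$ to $*$ is a relative homeomorphism between $(D^k\times S^1, S^{k-1}\times S^1)$ and $(Y^\star, \{*\})$. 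Finally, I would identify $Y^\star = (\R^k\times S^1)^\star \cong (W^u(\beta))^\star \cong \overline{W^u(\beta)}/\partial W^u(\beta)$ using \Cref{lem:unstable-manifold-homeomorphic} and \Cref{prop:quotient-compactification}, so that $\overline f$, composed with these homeomorphisms, is the desired relative homeomorphism.

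The only genuinely delicate point is the properness check, since everything else is a direct appeal to the cited lemmas; but properness is immediate because the $S^1$ factor is already compact and the disk-interior-to-$\R^k$ radial map is proper by construction. One should also take care that \Cref{lem:cts-extension-rel-homeo} is stated for $Y$ locally compact (satisfied here) and that the homeomorphism $W^u(\beta)\cong \R^k\times S^1$ from \Cref{lem:unstable-manifold-homeomorphic} transports the one-point compactification correctly, which is automatic since one-point compactification is a topological invariant of locally compact Hausdorff spaces. No other obstacles are anticipated.
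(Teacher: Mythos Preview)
Your proposal is correct and follows essentially the same route as the paper's proof: identify $\mathrm{int}(D^k)\times S^1$ with $W^u(\beta)$ via \Cref{lem:unstable-manifold-homeomorphic} and a radial homeomorphism $\mathrm{int}(D^k)\cong\R^k$, apply \Cref{lem:cts-extension-rel-homeo} to obtain the relative homeomorphism onto the one-point compactification, and finish with \Cref{prop:quotient-compactification}. The only difference is cosmetic---you first compactify $\R^k\times S^1$ and then transport via the homeomorphism to $W^u(\beta)$, whereas the paper composes first and then compactifies---and your explicit properness verification is a detail the paper leaves implicit.
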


\begin{proof}
    Denote by $f\colon \R^k \times S^1 \to W^u(\beta)$ the diffeomorphism from \Cref{lem:unstable-manifold-homeomorphic} (the definition can be found in \cite[Section 2.2]{Smale1960MorseIineq}). Identifying the interior of $D^k$ with $\R^k$ via some fixed homeomorphism, we can view $f$ as a map from $\operatorname{Int}(D^k)\times S^1$ to $W^u(\beta)$. By \Cref{lem:cts-extension-rel-homeo}, we can extend this to a relative homeomorphism $\overline{f}\colon (D^k\times S^1,S^{k-1}\times S^1) \to (W^u(\beta)^\star,*)$. Composing this with the homeomorphism $W^u(\beta)^\star \cong \overline{W^u(\beta)}/\partial W^u(\beta)$ from \Cref{prop:quotient-compactification}, the result follows.
\end{proof}

Now we are ready to prove the analogous result to \Cref{cor:relative-cech-homology-unstable-fixpt} also for closed orbits.

\begin{corollary}\label{cor:relative-cech-homology-unstable-orbit}
    If $\beta$ is a closed orbit of index $k\ge 0$, then 
    \[
    \check{H}_r(\overline{W^u(\beta)},\partial W^u(\beta)) \cong \begin{cases}
        \Fi, &\text{if } $r=k+1, k$, \\
        0, &\text{otherwise}.
    \end{cases}
    \]
\end{corollary}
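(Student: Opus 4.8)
The plan is to reduce the statement to the homology computation already carried out in \Cref{lem:relative-homology-disk-sphere-circle}, using the relative homeomorphism provided by \Cref{prop:unstable-circle-relative-homeo}. Concretely, for a closed orbit $\beta$ of index $k$, I would first observe that $\overline{W^u(\beta)}$ is compact, being a closed subset of the compact manifold $M$, so $(\overline{W^u(\beta)},\partial W^u(\beta))$ is a compact pair and \Cref{cor:quotient-rel-homeo} applies. Thus by \Cref{thm:rel-homeo-induce-iso} the quotient map induces an isomorphism $\check{H}_r(\overline{W^u(\beta)},\partial W^u(\beta)) \cong \check{H}_r(\overline{W^u(\beta)}/\partial W^u(\beta),*)$.

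Next I would split into the two cases according to the index. If $k\ge 1$, \Cref{prop:unstable-circle-relative-homeo} gives a relative homeomorphism $(D^k\times S^1,S^{k-1}\times S^1) \to (\overline{W^u(\beta)}/\partial W^u(\beta),*)$ between compact pairs, so \Cref{thm:rel-homeo-induce-iso} yields $\check{H}_r(\overline{W^u(\beta)}/\partial W^u(\beta),*) \cong \check{H}_r(D^k\times S^1,S^{k-1}\times S^1)$. Since the pair $(D^k\times S^1,S^{k-1}\times S^1)$ is triangulable, \Cref{cor:cech-sing-iso} identifies this with the singular homology computed in \Cref{lem:relative-homology-disk-sphere-circle}, which is $\Fi$ in degrees $r=k,k+1$ and $0$ otherwise — exactly the claimed answer for $k\ge 1$.

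For the remaining case $k=0$, the orbit $\beta$ is an attracting closed orbit, $W^u(\beta)=\beta\cong S^1$ is already closed in $M$, hence $\partial W^u(\beta)=\emptyset$, and the statement reduces to $\check{H}_r(S^1) \cong \Fi$ for $r=0,1$ and $0$ otherwise, which follows from \Cref{cor:cech-sing-iso} and the known homology of $S^1$; note that $k=0,\ k+1=1$ matches the claimed degrees. (Alternatively, one can note $\R^0\times S^1 = S^1$ is compact, so all these compactification maneuvers degenerate.)

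The only real subtlety — rather than a genuine obstacle — is the bookkeeping at the edges: making sure the $k=0$ case is handled separately since \Cref{prop:unstable-circle-relative-homeo} is stated only for $k\ge 1$, and checking the index conventions line up so that the relative homology sits in degrees $k$ and $k+1$ (and not, say, $k-1$ and $k$). Everything else is a routine chaining of the already-established isomorphisms: relative homeomorphism invariance of \v{C}ech homology, the quotient-is-a-relative-homeomorphism corollary, the one-point-compactification identification, and the \v{C}ech–singular comparison on triangulable pairs.
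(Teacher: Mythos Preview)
Your proposal is correct and follows essentially the same argument as the paper: handle $k=0$ directly via $\check{H}_r(S^1)$, and for $k\ge 1$ chain together \Cref{cor:quotient-rel-homeo}, \Cref{thm:rel-homeo-induce-iso}, \Cref{prop:unstable-circle-relative-homeo}, \Cref{cor:cech-sing-iso}, and \Cref{lem:relative-homology-disk-sphere-circle}. The only cosmetic difference is that the paper treats the $k=0$ case first before passing to the quotient, whereas you pass to the quotient uniformly and then split; this is immaterial.
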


\begin{proof}
    If the index of $\beta$ is $k=0$, then $\overline{W^u(\beta)}=W^u(\beta)=\beta \cong S^1$ and $\partial W^u(\beta)=\emptyset$. In that case, the result is clear. For $k\ge 1$, we have 
    \begin{align*}        
    \check{H}_r(\overline{W^u(\beta)},\partial W^u(\beta)) 
    \cong \check{H}_r(\overline{W^u(\beta)}/\partial W^u(\beta),*) 
    &\cong \check{H}_r(D^k\times S^1,S^{k-1}\times S^1) \\
    &\cong H_r(D^k\times S^1,S^{k-1}\times S^1).
    \end{align*}
    The first isomorphism follows from \Cref{thm:rel-homeo-induce-iso} and \Cref{cor:quotient-rel-homeo}. 
    The second isomorphism follows from \Cref{thm:rel-homeo-induce-iso} and \Cref{prop:unstable-circle-relative-homeo}.
    The third isomorphism follows from \Cref{cor:cech-sing-iso}, since these are triangulable spaces.
    The result then follows from \Cref{lem:relative-homology-disk-sphere-circle}.
\end{proof}

\subsection{Canonical generators for the first page}\label{sec:canonical-generators}

Recall that $\Fix_k(v)$ is the set of fixed points of index $k$ and $\Orb_k(v)$ is the set of closed orbits of index $k$. Recall further that in \Cref{sec:filtration-induced-by-vector-field} we described a filtration $\emptyset = L_{-1} \subseteq L_0 \subseteq \cdots \subseteq L_n = M$ in terms of unstable manifolds of the singular elements of $v$. Inspired by the proof of \cite[Lemma 5.1]{Smale1960MorseIineq} and making use of the computations done in \Cref{sec:homology-computations}, we show that one can get canonical bases for the relative \v{C}ech homology groups of the pairs $(L_p,L_{p-1})$, where each fixed point contributes one basis element and each periodic orbit contributes two. This is useful for us since these will be the terms appearing on the first page of our spectral sequence.

Given $\beta \in \Fix(v)$, we denote by $\Fi \langle \beta \rangle$ the vector space of formal $\Fi$-linear combinations of $\beta$. If $\beta \in \Orb(v)$ is a closed orbit, we formally create two copies $\beta^+$ and $\beta^-$, and denote by $\Fi \langle \beta^+ \rangle$ and $\Fi \langle \beta^- \rangle$ the vector spaces of their formal $\Fi$-linear combinations, respectively.

\begin{proposition}\label{prop:rel-cech-basis}
    For every $p,r\ge 0$, we have 
    \[
    \check{H}_r(L_p,L_{p-1}) \cong 
    \bigoplus_{\substack{\beta \in \Fix_r(v)\\ \beta \in L_p\setminus L_{p-1}}} \Fi \langle \beta \rangle \oplus 
    \bigoplus_{\substack{\beta \in \Orb_{r-1}(v)\\ \beta \subseteq L_p\setminus L_{p-1}}} \Fi \langle \beta^+ \rangle \oplus
    \bigoplus_{\substack{\beta \in \Orb_r(v)\\ \beta \subseteq L_p\setminus L_{p-1}}} \Fi \langle \beta^- \rangle.
    \]
\end{proposition}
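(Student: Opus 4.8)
The plan is to identify $L_p \setminus L_{p-1}$ as a disjoint union of the (open) unstable manifolds that get added at step $p$, and then compute the relative \v{C}ech homology via the one-point compactification. By the recursive definition of the filtration in \Cref{sec:filtration-induced-by-vector-field}, the set $L_p \setminus L_{p-1}$ is precisely the disjoint union of those $W^u(\beta)$ with $\partial W^u(\beta) \subseteq L_{p-1}$ but $W^u(\beta) \not\subseteq L_{p-1}$; write $S_p$ for the finite set of such singular elements $\beta$. The first step is therefore to argue that collapsing $L_{p-1}$ in $L_p$ yields a wedge of the one-point compactifications of these pieces, i.e.
\[
\check{H}_r(L_p, L_{p-1}) \;\cong\; \check{H}_r(L_p/L_{p-1}, *) \;\cong\; \check{H}_r\Bigl(\bigvee_{\beta \in S_p} \overline{W^u(\beta)}/\partial W^u(\beta), \,*\Bigr),
\]
using \Cref{cor:quotient-rel-homeo} and \Cref{thm:rel-homeo-induce-iso} for the first isomorphism. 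For the second, I would construct an explicit relative homeomorphism from $L_p/L_{p-1}$ to the wedge: on each open cell $W^u(\beta)$ it is the identity, and the complement $L_{p-1}$ together with all the $\partial W^u(\beta)$ maps to the common wedge point. One checks this is a continuous bijection on the quotient (every point of $L_p$ either lies in a unique $W^u(\beta)$ with $\beta \in S_p$, or lies in $L_{p-1}$), hence — since $L_p$ is compact and the target is Hausdorff — a homeomorphism; alternatively invoke \Cref{lem:cts-extension-rel-homeo} piece by piece. Then \Cref{prop:homology-iso-wedge-directsum} splits this as $\bigoplus_{\beta \in S_p} \check{H}_r(\overline{W^u(\beta)}/\partial W^u(\beta), *) \cong \bigoplus_{\beta \in S_p} \check{H}_r(\overline{W^u(\beta)}, \partial W^u(\beta))$.

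The second step is purely bookkeeping: plug in the homology computations. For $\beta \in \Fix_j(v) \cap S_p$, \Cref{cor:relative-cech-homology-unstable-fixpt} gives a copy of $\Fi$ in degree $r = j$ and nothing else; this contributes the summand $\Fi\langle\beta\rangle$ exactly when $j = r$, matching the first sum in the statement. For $\beta \in \Orb_j(v)$ with $\beta \subseteq L_p \setminus L_{p-1}$, \Cref{cor:relative-cech-homology-unstable-orbit} gives a copy of $\Fi$ in degree $r = j$ and a copy of $\Fi$ in degree $r = j+1$; labelling the former generator $\beta^-$ and the latter $\beta^+$ (so that $\beta^+$ sits in one degree higher than $\beta^-$), this contributes $\Fi\langle\beta^+\rangle$ when $j = r-1$ and $\Fi\langle\beta^-\rangle$ when $j = r$, matching the second and third sums. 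Ranging over all $\beta \in S_p$ and reorganizing by type and index yields exactly the claimed direct sum decomposition.

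The main obstacle is the second isomorphism in the first displayed equation — namely verifying carefully that $L_p/L_{p-1}$ really is (homeomorphic to) the wedge $\bigvee_{\beta \in S_p} \overline{W^u(\beta)}/\partial W^u(\beta)$. This requires knowing that the added pieces at step $p$ are genuinely disjoint as subsets of $M$ (which follows from \Cref{prop:decomposition-unstable-manifolds}, since distinct unstable manifolds are disjoint) and, more delicately, that no point outside $L_{p-1} \cup \bigcup_{\beta \in S_p} W^u(\beta)$ lies in $L_p$ — i.e. that $L_p \setminus L_{p-1}$ contains no extra limit points beyond the open cells themselves. This is exactly the content of the poset description of the filtration used in the proof of \Cref{prop:exist-number}: the closure $\overline{W^u(\beta)}$ of any newly-added cell is contained in $L_p$, so all relevant boundary points are already in $L_{p-1}$, and thus the quotient $L_p/L_{p-1}$ sees each $\overline{W^u(\beta)}$ with its boundary crushed to the single basepoint. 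Once this set-theoretic/topological identification is nailed down, the rest is a direct application of the results assembled in \Cref{sec:cech-homology} and \Cref{sec:homology-computations}. A minor additional care point is the degenerate case $k = 0$ for closed orbits (where $\partial W^u(\beta) = \emptyset$ and $W^u(\beta) \cong S^1$ is already closed), which is handled exactly as in \Cref{cor:relative-cech-homology-unstable-orbit}.
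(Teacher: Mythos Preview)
Your proposal is correct and follows essentially the same route as the paper's own proof: collapse $L_{p-1}$, identify the quotient with a wedge of the one-point compactifications of the newly-added unstable manifolds, split via \Cref{prop:homology-iso-wedge-directsum}, and read off the summands from \Cref{cor:relative-cech-homology-unstable-fixpt,cor:relative-cech-homology-unstable-orbit}. If anything, you are more explicit than the paper about why $L_p/L_{p-1}$ really is that wedge (the paper simply asserts this ``follows from the definition of the $L_p$''), and the paper separates out the case $p=0$ explicitly rather than folding it into the general argument.
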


\begin{proof}
    For $p=0$ we have $L_0=\Fix_0(v) \sqcup \Orb_0(v)$. Thus,
    \begin{align*}
        \check{H}_r(L_0,L_{-1}) =\check{H}_r(L_0,\emptyset)  = \check{H}_r(\Fix_0(v) \sqcup \Orb_0(v)) 
        = \bigoplus_{\beta \in \Fix_0(v)} \check{H}_r(\{\beta\}) \oplus \bigoplus_{\beta \in \Orb_0(v)} \check{H}_r(\beta).
    \end{align*}
    Since $\beta\in \Fix_0(v)$ is a point and $\beta \in \Orb_0(v)$ is homeomorphic to a circle, the result in the case $p=0$ follows.
    For $p\ge 1$, we have
    \begin{align*}
        \check{H}_r(L_p,L_{p-1}) &\cong \check{H}_r(L_p/ L_{p-1},*) \\[1ex]
        &\cong \check{H}_r \left(\bigvee_{\substack{\beta \in \Fix(v)\\ \beta \in L_p\setminus L_{p-1}}} \overline{W^u}(\beta)/\partial W^u(\beta) \vee \bigvee_{\substack{\beta \in \Orb(v)\\ \beta \subseteq L_p\setminus L_{p-1}}} \overline{W^u}(\beta)/\partial W^u(\beta), * \right) \\[1em]
        &\cong \bigoplus_{\substack{\beta \in \Fix(v)\\ \beta \in L_p\setminus L_{p-1}}} \check{H}_r\left(\overline{W^u}(\beta)/\partial W^u(\beta),*\right) \oplus \bigoplus_{\substack{\beta \in \Orb(v)\\ \beta \subseteq L_p\setminus L_{p-1}}} \check{H}_r\left(\overline{W^u}(\beta)/\partial W^u(\beta),*\right) \\[1em]
        &\cong \bigoplus_{\substack{\beta \in \Fix_r(v)\\ \beta \in L_p\setminus L_{p-1}}} \Fi \langle \beta \rangle \oplus 
    \bigoplus_{\substack{\beta \in \Orb_{r-1}(v)\\ \beta \subseteq L_p\setminus L_{p-1}}} \Fi \langle \beta^+ \rangle \oplus
    \bigoplus_{\substack{\beta \in \Orb_r(v)\\ \beta \subseteq L_p\setminus L_{p-1}}} \Fi \langle \beta^- \rangle.
    \end{align*}
    The first isomorphism follows from \Cref{thm:rel-homeo-induce-iso} and \Cref{cor:quotient-rel-homeo}. The second one follows from the definition of the $L_p$, since $L_p \setminus L_{p-1}$ is the disjoint union of all those unstable manifolds that are not contained in $L_{p-1}$ but whose boundary is. The third one follows from \Cref{prop:homology-iso-wedge-directsum}. The fourth isomorphism follows from \Cref{cor:relative-cech-homology-unstable-fixpt,cor:relative-cech-homology-unstable-orbit}.
\end{proof}

\section{The chain complex of a Morse-Smale vector field}\label{sec:putting-together}

Here we connect all the various things from the previous sections and subsections in order to conclude our goal of associating algebraic invariants with Morse-Smale vector fields.

Let $v$ be a Morse-Smale vector field on a closed smooth manifold $M$. We want to assign a chain complex $C_\bullet(v)$ to $v$ according to the following pipeline:

\tikzstyle{process2} = [rectangle, 
minimum width=2.2cm, 
minimum height=1cm, 
text centered,
text width=2.2cm, 
draw=black, 
fill=orange!30]

\tikzstyle{process3} = [rectangle, 
minimum width=3.6cm, 
minimum height=1cm, 
text centered,
text width=3.6cm, 
draw=black, 
fill=orange!30]

\begin{center}
	\begin{tikzpicture}[node distance=4.8cm]
		
		\node (vec) [process2] {\footnotesize Morse-Smale vector field ${v \in \X_{MS}(M)}$};
		
		\node (filt) [process3, right of=vec] {\footnotesize {filtration by unstable manifolds}\\ $ L_0 \subseteq L_1 \subseteq \cdots \subseteq L_n = M$};
		
		\node (SS) [process3, right of=filt] {\footnotesize spectral sequence in \v{C}ech homology ${E^1_{p,q} = \check{H}_{p+q}(L_p,L_{p-1})}$};
		
		\node (CC) [process2, right of=SS] {\footnotesize {chain complex}\linebreak $C_\bullet(E(v))$};
		
		\draw[->] (vec)--(filt);
		\draw[->] (filt) -- (SS);
		\draw[->] (SS)--(CC);
	\end{tikzpicture}
\end{center}

\textbf{1st arrow:} Given a Morse-Smale vector field $v$, we consider the filtration of the underlying manifold by unstable manifolds of fixed points and closed orbits described in \Cref{sec:filtration-induced-by-vector-field}. This is defined inductively: In each step we add in those unstable manifolds whose boundaries attach only to things that have been added before. 
Explicitly, $L_{-1} := \emptyset$ and for $p\ge0$, if $L_{p-1}$ has been defined, then 
\[
L_p := \displaystyle \bigcup_{\substack{\beta \in \operatorname{Crit}(v), \\ \partial W^u(\beta) \subseteq L_{p-1}}} W^u(\beta).
\]

\textbf{2nd arrow:} From this filtration we consider the spectral sequence $E(v)$  whose first page consists of the relative homologies of adjacent steps in the filtration, which exists according to \Cref{prop:spectral-sequence-from-filtration}. The differentials are defined in terms of the connecting homomorphism from the long exact sequence in homology.\\

\textbf{3rd arrow:} We follow \Cref{sec:chain-complex-of-spectral-sequence} to construct a chain complex $ C_\bullet(v):= C_\bullet(E(v))$ from this spectral sequence. According to \Cref{prop:chain-complex-of-spectral-sequence}, the vector spaces of this chain complex are isomorphic to the direct sums along the diagonals on the first page, which can be determined through \Cref{prop:rel-cech-basis}. \\ 

This pipeline is canonical in the sense that no arbitrary choices are made, such as auxiliary functions or filtrations. In the next subsections, we explore the main properties of the resulting chain complex together with some applications and explicit examples.

\subsection{Properties of $C_\bullet(v)$}

The above pipeline allows us to  construct a chain complex from a Morse-Smale vector field, free of arbitrary choices. Moreover, the homology of this chain complex agrees with the homology of the underlying manifold, and the vector spaces in the chain complex are generated by the singular elements of the vector field. We summarize this in the following result.

\begin{theorem}\label{thm:conclusion}
    If $v$ is a Morse-Smale vector field on a closed smooth manifold $M$ of dimension $m$, then there exists a chain complex $C_\bullet=C_\bullet(v)$ with the following properties:
    \begin{enumerate}[(i)]
        \item $H_*(C_\bullet) \cong H_*(M)$,
        \item $\dim(C_k) = |\Fix_k(v)|+|\Orb_{k-1}(v)|+|\Orb_k(v)|$ for $k=0,1,\ldots,m$.
    \end{enumerate}
\end{theorem}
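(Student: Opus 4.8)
The plan is to run the pipeline summarized above, invoking the results already established at each arrow, and then to read off both properties directly from the structure of $C_\bullet(E(v))$.

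First I would fix the unstable manifolds filtration $\emptyset = L_{-1} \subseteq L_0 \subseteq \cdots \subseteq L_n = M$ of \Cref{sec:filtration-induced-by-vector-field}, which exists and stabilizes at $M$ by \Cref{prop:exist-number}. Each $L_p$ is closed in the compact manifold $M$, hence compact, so \Cref{prop:spectral-sequence-from-filtration} applies and yields a spectral sequence $E(v) = \{E^r_{\bullet,\bullet}\}_{r\ge 1}$ with $E^1_{p,q} = \check{H}_{p+q}(L_p,L_{p-1})$, arising from the exact couple whose $D$-terms are $\check{H}_{p+q}(L_p)$ and whose maps $i,j,k$ have bidegrees $(1,-1),(0,0),(-1,0)$. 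In particular this is an exact couple with $a=0$, so \Cref{prop:weibel} and the entire construction of \Cref{sec:chain-complex-of-spectral-sequence} apply verbatim, and I would set $C_\bullet(v) := C_\bullet(E(v))$ as in \Cref{prop:chain-complex-of-spectral-sequence}.

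For property (i), \Cref{prop:spectral-sequence-from-filtration} tells us that $E(v)$ converges to $\check{H}_*(M)$, so \Cref{prop:chain-complex-from-spectral-sequence-homology} gives $H_*(C_\bullet(v)) \cong \check{H}_*(M)$. Since $M$ is a closed smooth manifold it is triangulable, so \Cref{cor:cech-sing-iso} identifies $\check{H}_*(M)$ with the singular homology $H_*(M)$, yielding (i). For property (ii), \Cref{prop:chain-complex-of-spectral-sequence} gives $C_k \cong \bigoplus_{p+q=k} E^1_{p,q} = \bigoplus_p \check{H}_k(L_p,L_{p-1})$, and substituting the canonical description of each summand from \Cref{prop:rel-cech-basis} gives
\[
\dim(C_k) = \sum_p \Big( \big|\{\beta \in \Fix_k(v) : \beta \in L_p\setminus L_{p-1}\}\big| + \big|\{\beta \in \Orb_{k-1}(v) : \beta \subseteq L_p\setminus L_{p-1}\}\big| + \big|\{\beta \in \Orb_k(v) : \beta \subseteq L_p\setminus L_{p-1}\}\big| \Big).
\]
By \Cref{prop:decomposition-unstable-manifolds} together with the poset analysis in the proof of \Cref{prop:exist-number}, each singular element $\beta$ has $W^u(\beta)$ contained in exactly one set $L_p\setminus L_{p-1}$ (for $p$ the height of $W^u(\beta)$ in the attaching poset), and these exhaust all fixed points and closed orbits; hence each of the three sums telescopes to $|\Fix_k(v)|$, $|\Orb_{k-1}(v)|$, and $|\Orb_k(v)|$ respectively. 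Since fixed points have index in $\{0,\dots,m\}$ and closed orbits index in $\{0,\dots,m-1\}$, this establishes the formula for all $k=0,\dots,m$.

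I do not expect a genuine obstacle here: the substantive work — existence and convergence of the \v{C}ech homology spectral sequence, the algebraic passage from exact couples to chain complexes, and the canonical basis computation of \Cref{prop:rel-cech-basis} — has already been done. The only points needing a moment's care are checking that the bidegree conventions of the filtration's exact couple match those assumed in \Cref{sec:chain-complex-of-spectral-sequence} (so that $C_\bullet(E(v))$ is defined), and confirming that each singular element is counted exactly once when summing the relative homologies over the filtration levels.
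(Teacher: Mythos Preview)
Your proposal is correct and follows essentially the same route as the paper's proof: invoke \Cref{prop:exist-number} for the filtration, \Cref{prop:spectral-sequence-from-filtration} for the convergent spectral sequence, \Cref{prop:chain-complex-from-spectral-sequence-homology} together with \Cref{cor:cech-sing-iso} for property~(i), and \Cref{prop:chain-complex-of-spectral-sequence} plus \Cref{prop:rel-cech-basis} for property~(ii). The paper packages the counting step via the auxiliary notation $\Fix_r^p(v)$ and $\Orb_r^p(v)$ rather than your poset-height remark, but the content is identical.
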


\begin{proof}
    By \Cref{prop:exist-number}, the unstable manifold filtration of $v$ eventually contains all of $M$, i.e. $\emptyset \subseteq L_0 \subseteq L_1 \subseteq \cdots \subseteq L_n =M$. By \Cref{prop:spectral-sequence-from-filtration}, the spectral sequence $E=E(v)$ constructed from this filtration converges to $\check{H}_*(M)$, which equals $H_*(M)$, since $M$ is a manifold and thus triangulable. By \Cref{prop:chain-complex-from-spectral-sequence-homology}, it follows that $H_*(C_\bullet) \cong H_*(M)$. 

    It remains to show the formula for the dimensions. Let us introduce additional notation to further decompose the sets of fixed points and closed orbits, according to the time at which they enter the filtration of $M$ by the sets $L_p$. More precisely, for $r=0,1,\ldots,m$, we have $\Fix_r(v)=\Fix_r^0(v) \sqcup \Fix_r^1(v) \sqcup \ldots \sqcup \Fix_r^n(v)$, and for $r=0,1,\ldots,m-1$, we have $\Orb_r(v) = \Orb_r^0(v) \sqcup \Orb_r^1(v) \sqcup \ldots \sqcup \Orb_r^n(v)$, where, for $p=0,1,\ldots,n$,
    \begin{align*}
        \Fix_r^p(v) &:= \{ \beta \in \Fix_r(v) \mid \beta \in L_p \setminus L_{p-1}\}, \\
        \Orb_r^p(v) &:= \{ \beta \in \Orb_r(v) \mid \beta \subseteq L_p \setminus L_{p-1}\}.
    \end{align*}
    We can now compute the dimension of $C_k$, namely
    \begin{align*}
        \dim(C_k) 
        = \sum_{p+q=k} \dim(E^1_{p,q}) 
        &= \sum_{p+q=k} \dim(\check{H}_k(L_p,L_{p-1})) \\
        &= \sum_{p+q=k} |\Fix_k^p(v)|+|\Orb_{k-1}^p(v)|+|\Orb_k^p(v)| \\
        &= |\Fix_k(v)|+|\Orb_{k-1}(v)|+|\Orb_k(v)|,
    \end{align*}
    where the first equality follows from \Cref{prop:chain-complex-of-spectral-sequence} and \Cref{eq:first-page-filtration-iso}, the second by \Cref{prop:spectral-sequence-from-filtration}, the third from \Cref{prop:rel-cech-basis}, and the fourth from realizing that the sum over $p+q=k$ is equivalent to simply a sum over all $p$ and then reordering the terms.
\end{proof}

We now prove that the isomorphism type of the chain complex assigned to a Morse-Smale vector field is invariant under topological equivalence.

\begin{proposition}
    If $v$ and $w$ are topologically equivalent Morse-Smale vector fields on a closed smooth manifold $M$, then the chain complexes $C_\bullet(v)$ and $C_\bullet(w)$ are isomorphic.
\end{proposition}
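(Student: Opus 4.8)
The plan is to show that a topological equivalence $h\colon M \to M$ carrying the flow of $v$ to the flow of $w$ induces an isomorphism of all the structures in the pipeline, and then to invoke the functoriality already established (isomorphic exact couples yield isomorphic chain complexes). First I would recall what a topological equivalence gives us: a homeomorphism $h\colon M \to M$ that maps orbits of $v$ to orbits of $w$ preserving their time-orientation. From this it follows directly that $h$ restricts to a bijection between the critical elements (fixed points and closed orbits) of $v$ and those of $w$, that it maps $W^u_v(\beta)$ homeomorphically onto $W^u_w(h(\beta))$ for every critical element $\beta$, and that it preserves the index (since the index of $W^u_v(\beta)$ is a topological invariant, being the dimension of the manifold $W^u_v(\beta)$, resp. $W^u_v(\beta)\cong \R^k\times S^1$, by \Cref{lem:unstable-manifold-homeomorphic}). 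Consequently $h$ also preserves the closure and boundary of unstable manifolds, $h(\overline{W^u_v(\beta)}) = \overline{W^u_w(h(\beta))}$ and $h(\partial W^u_v(\beta)) = \partial W^u_w(h(\beta))$, and hence preserves the partial order $\le$ used in \Cref{prop:exist-number}.

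Next I would show that $h$ carries the unstable manifolds filtration of $v$ to that of $w$. Since the sets $L_p$ are defined recursively purely in terms of which unstable manifolds have their boundary inside the previous level, and $h$ preserves unstable manifolds and their boundaries, an easy induction on $p$ gives $h(L_p^v) = L_p^w$ for all $p$ (in particular the two filtrations have the same length $n$). Therefore $h$ restricts to a homeomorphism of pairs $(L_p^v, L_{p-1}^v) \to (L_p^w, L_{p-1}^w)$ for each $p$, and these are compatible with the inclusions $L_{p-1}\hookrightarrow L_p$. Applying \v{C}ech homology, which is a functor, gives isomorphisms $\check{H}_{p+q}(L_p^v, L_{p-1}^v) \cong \check{H}_{p+q}(L_p^w, L_{p-1}^w)$ and $\check{H}_{p+q}(L_p^v) \cong \check{H}_{p+q}(L_p^w)$ commuting with the maps $i$, $j$, $k$ of the long exact sequences (naturality of the inclusion-induced maps and of the connecting homomorphism). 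Assembling these over all $p,q$ yields an isomorphism of the exact couples $\E(v)$ and $\E(w)$ constructed in \Cref{prop:spectral-sequence-from-filtration}.

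Finally, once we have an isomorphism of exact couples, the proposition that isomorphic exact couples lead to isomorphic spectral sequences and hence to isomorphic chain complexes $C_\bullet(E(v)) \cong C_\bullet(E(w))$ finishes the argument. I expect the only genuinely delicate point to be the claim that $h$ maps $W^u_v(\beta)$ onto $W^u_w(h(\beta))$ and preserves the index: one must check that a topological equivalence, which need not be smooth and need not respect the time parametrization, nonetheless carries $\alpha$- and $\omega$-limit behaviour correctly, so that $q \in W^u_v(\beta)$ (i.e. $\alpha_v(q) = \beta$, or $\alpha_v(q)\subseteq\beta$ in the closed-orbit case) is equivalent to $h(q) \in W^u_w(h(\beta))$. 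This is standard in the dynamical systems literature — topological equivalence preserves $\alpha$- and $\omega$-limit sets because it maps whole orbits to whole orbits preserving their orientation — but it is worth stating carefully; everything downstream is then formal functoriality.
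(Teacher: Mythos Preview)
Your proposal is correct and follows essentially the same route as the paper's own proof: show that the topological equivalence $h$ carries the unstable manifolds filtration of $v$ onto that of $w$ level by level, deduce an isomorphism of the associated exact couples/spectral sequences, and then invoke the already established fact that isomorphic exact couples yield isomorphic chain complexes. Your write-up is in fact more careful than the paper's (you make the induction on $p$ explicit, pass through the exact couple rather than jumping straight to the spectral sequence, and flag the one dynamical point that deserves justification, namely that $h$ respects $\alpha$-limit sets and hence unstable manifolds and indices), but the underlying argument is the same.
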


\begin{proof}
    Denote by $h\colon M \to M$ the topological equivalence between $v$ and $w$. This means that $h$ is a homeomorphism that maps fixed points of $v$ to fixed points of $w$ and closed orbits of $v$ to closed orbits of $w$, preserving the indices. Moreover, $h$ restricts to homeomorphisms between the unstable manifolds of $v$ and $w$. Thus we have that corresponding fixed points get added at the same time in the filtrations corresponding to $v$ and $w$. Writing $\emptyset \subseteq L_0 \subseteq L_1 \subseteq \cdots \subseteq L_n = M$ for the filtration associated with $v$ and $\emptyset \subseteq L_0' \subseteq L_1' \subseteq \cdots \subseteq L_n' = M$ for the filtration associated with $w$, as described in \Cref{sec:filtration-induced-by-vector-field}, we get that $L_p \cong L_p'$ for all $p=0,1,\ldots,n$. It follows that we have an isomorphism of spectral sequences $E(v) \cong E(w)$. Since the construction of $C_\bullet(v)$ from $E(v)$ and of $C_\bullet(w)$ from $E(w)$ is purely algebraic and follows the same step, this induces an isomorphism of chain complexes $C_\bullet(v) \cong C_\bullet(w)$. 
\end{proof}

\subsection{The gradient-like case}

For a gradient-like Morse-Smale vector field $v$, a chain complex with the desired properties such as the one constructed above already exists in the form of the {\em Morse complex} $\MC_\bullet(v)$. The vector spaces $\MC_k(v)$ are generated by the fixed points of index $k$ of $v$ and the boundary operator is defined by counting flow lines between fixed points of adjacent indices, see for example \cite{BanyagaLecturesOnMorseHomology} for more details. We now show that for a gradient-like Morse-Smale vector field $v$, our chain complex $C_\bullet(v)$ is isomorphic to the Morse complex $\MC_\bullet(v)$ of $v$. 

\begin{corollary}
    If $v$ is a gradient-like Morse-Smale vector field on $M$, then $C_\bullet(v) \cong \MC_\bullet(v)$.
\end{corollary}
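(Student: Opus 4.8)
The plan is to show that in the gradient-like case the spectral sequence $E(v)$ coincides, up to isomorphism, with the spectral sequence whose associated chain complex (via the construction of \Cref{sec:chain-complex-of-spectral-sequence}) is exactly the Morse complex, and then invoke uniqueness of the Morse homology. First I would observe that when $v$ has no closed orbits, \Cref{prop:rel-cech-basis} degenerates: $\Orb_r(v)=\emptyset$ for all $r$, so $\check{H}_r(L_p,L_{p-1})\cong\bigoplus_{\beta\in\Fix_r^p(v)}\Fi\langle\beta\rangle$, which is concentrated in the single homological degree $r=p$ (because a fixed point of index $r$ has $\partial W^u(\beta)\subseteq L_{p-1}$ exactly when it enters at step $p$, and by \Cref{cor:relative-cech-homology-unstable-fixpt} its contribution sits in degree equal to its index). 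In fact, in the gradient-like case the unstable manifolds filtration \emph{is} the index filtration: $L_p\setminus L_{p-1}$ consists precisely of the unstable manifolds of the index-$p$ fixed points, so $L_p$ is the $p$-skeleton of the associated CW structure on $M$ and $E^1_{p,q}=\check{H}_{p+q}(L_p,L_{p-1})$ vanishes unless $q=0$.

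Next I would exploit this concentration on the line $q=0$. A first-quadrant (here, single-row) spectral sequence with $E^1_{p,q}=0$ for $q\neq 0$ collapses: every differential $d^r$ for $r\geq 2$ has target or source off the row $q=0$ and hence vanishes, so $E^2=E^\infty$ and the only possibly nonzero differential is $d^1\colon E^1_{p,0}\to E^1_{p-1,0}$. Thus the whole spectral sequence reduces to a single chain complex $(E^1_{\bullet,0},d^1)$, which is exactly the cellular chain complex of the CW pair filtration $L_\bullet$, i.e. the cellular chain complex of $M$ with respect to the CW decomposition by unstable manifolds; its generators in degree $p$ are the index-$p$ fixed points. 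Now I would unwind the construction of \Cref{prop:chain-complex-of-spectral-sequence} in this collapsed situation: since $E^{(r)}=E^{(2)}$ for all $r\geq 2$, the only nonzero maps $\overline d^{(r)}$ are $\overline d^{(1)}\colon Z^{(1)}/Z^{(2)}\to B^{(2)}/B^{(1)}$, and the total boundary matrix has a single nonzero block. Tracing through, one checks that the resulting $C_\bullet(E(v))$ is canonically isomorphic to $(E^1_{\bullet,0},d^1)$ itself (the surviving summand $Z^{(n)}/B^{(n)}=E^\infty$ together with the paired-off $Z^{(1)}/Z^{(2)}\oplus B^{(2)}/B^{(1)}$ reassembles the complex $E^1_{p,0}\xrightarrow{d^1}E^1_{p-1,0}$ up to a chain isomorphism, because a two-term complex with a specified boundary is recovered from its homology and its "acyclic part" in the standard way).

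It then remains to identify $(E^1_{\bullet,0},d^1)$ with the Morse complex $\MC_\bullet(v)$. Both are chain complexes of $\Fi$-vector spaces with basis the fixed points of $v$ indexed by Morse index, and both have homology $H_*(M)$; but to get an actual \emph{isomorphism} of chain complexes I would appeal to the standard fact (see e.g. \cite{BanyagaLecturesOnMorseHomology}) that the Morse differential of a gradient-like Morse-Smale vector field agrees with the cellular boundary map of the CW structure furnished by the unstable manifolds — this is precisely how the isomorphism between Morse homology and singular homology is usually proven. Concretely: the attaching maps of the cells $W^u(\beta)$ realize the CW structure whose cellular boundary counts (with signs, or mod $2$ over $\Fi=\mathbb F_2$) the flow lines between fixed points of consecutive index, which is the definition of the Morse differential. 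Hence $\MC_\bullet(v)\cong(E^1_{\bullet,0},d^1)\cong C_\bullet(v)$.

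The main obstacle is the last step: making the identification of the cellular boundary $d^1$ with the Morse differential fully rigorous in the generality considered here (e.g. keeping careful track of signs/orientations if $\Fi$ is not of characteristic $2$, and handling the fact that the cells $W^u(\beta)$ are only immersed, not embedded, so one is really working with the CW structure of Smale's filtration rather than a literal CW complex in the naive sense). I would handle this by citing the classical comparison between Morse homology and cellular homology in \cite{BanyagaLecturesOnMorseHomology} and noting that the filtration $L_\bullet$ in the gradient-like case is exactly the skeletal filtration used there; the remaining verifications — that the spectral-sequence-to-chain-complex functor of \Cref{sec:chain-complex-of-spectral-sequence} sends a one-row (collapsed) spectral sequence to its unique surviving chain complex up to isomorphism — are routine linear algebra of the kind already carried out in \Cref{prop:chain-complex-from-spectral-sequence-homology}.
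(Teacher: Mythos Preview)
Your argument has a genuine gap: the claim that in the gradient-like case Smale's unstable manifolds filtration \emph{is} the index filtration is false. Smale's filtration adds $W^u(\beta)$ at step $p$ as soon as $\partial W^u(\beta)\subseteq L_{p-1}$, regardless of the index of $\beta$. Take the standard height function on $S^2$: there is one index-$0$ sink $q$ and one index-$2$ source $p$, no saddles. Then $L_0=\{q\}$ and already $L_1=S^2$, since $\partial W^u(p)=\{q\}\subseteq L_0$. So the index-$2$ point enters at step $1$, and $E^1_{1,1}=\check H_2(S^2,\{q\})\cong\Fi$ is a nonzero term with $q=1\neq 0$. More generally, any index-$k$ fixed point whose unstable-manifold closure skips some intermediate indices will enter the filtration strictly before step $k$, so the spectral sequence is \emph{not} concentrated on the row $q=0$, your identification of $(E^1_{\bullet,0},d^1)$ with the cellular/Morse complex breaks down, and the subsequent collapse-and-reassemble argument does not apply as written.

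The paper sidesteps this entirely with a much shorter argument: over a field, a bounded chain complex of finite-dimensional vector spaces is determined up to isomorphism by the sequence of dimensions $\dim C_k$ together with the sequence $\dim H_k(C_\bullet)$ (decompose into homology summands plus contractible two-term summands; this is \cite[Exercise 1.1.3]{weibel1994introduction}). By \Cref{thm:conclusion}, $\dim C_k(v)=|\Fix_k(v)|=\dim\MC_k(v)$ (since there are no orbits) and $H_*(C_\bullet(v))\cong H_*(M)\cong H_*(\MC_\bullet(v))$, hence $C_\bullet(v)\cong\MC_\bullet(v)$. Note that this yields only an abstract isomorphism, not a canonical one --- which is all the corollary claims. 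Your strategy of constructing an explicit chain map could in principle give more, but it would need to be rebuilt around the actual unstable-manifolds filtration rather than the index filtration.
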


\begin{proof}
    Since every chain complex of vector spaces can be uniquely decomposed into homology summands whose differentials are zero and contractible summands with null homology (see e.g. Exercise 1.1.3 in \cite{weibel1994introduction}), it suffices to show that $\dim(C_k)=\dim(\MC_k)$ and $\dim (H_k(C_\bullet)) = \dim (H_k(\MC_\bullet))$ for all $k$. For the former, we have that $\dim(\MC_k))$ equals the number of fixed points of index $k$ of $v$. The same holds for $\dim(C_k)$ by \Cref{thm:conclusion}. As for the homology, again by \Cref{thm:conclusion} (and the Morse homology theorem), both of these are equal to the singular homology of $M$. 
\end{proof}

\subsection{The Morse inequalities}\label{sec:morse-inequalities}

We now want to use the chain complex $C_\bullet(v)$ assigned to a Morse-Smale vector field $v$ on a closed smooth manifold $M$ in order to derive the strong Morse inequalities from \cite{Smale1960MorseIineq}. Let us first recall the algebraic Morse inequalities, namely for any compact chain complex $C_\bullet \in \Ch$, and any $q\ge 0$, we have
\begin{equation}\label{eq:algebraic-morse-inequalities}
    \sum_{i=0}^q(-1)^{q+i} \dim(C_i) \ge \sum_{i=0}^q(-1)^{q+i} \dim(H_i(C_\bullet)).
\end{equation}
These inequalities can be shown by standard linear algebra arguments, doing an induction over $q$. Moreover, if $q$ is the maximal value for which $C_q\neq 0$, then the inequality from (\ref{eq:algebraic-morse-inequalities}) becomes an equality. The value on both sides of the equation in that case is the Euler characteristic. Given these algebraic Morse inequalities, we can derive the Morse inequalities from \cite[Theorem 1.1]{Smale1960MorseIineq}, using the mere existence of a chain complex as in \Cref{thm:conclusion}.

\begin{corollary}\label{cor:morse-ineq}
    If $v$ is a Morse-Smale vector field on a closed smooth manifold $M$, for $q\ge0$, write $R_q:=\dim(H_q(M))$ and $M_q:=|\Fix_q(v)|+|\Orb_{q-1}(v)|+|\Orb_q(v)|$. Then, the following inequalities hold:
    \[
    \sum_{i=0}^q(-1)^{q+i}M_i \ge \sum_{i=0}^q(-1)^{q+i}R_i.
    \]
\end{corollary}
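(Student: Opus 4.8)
The plan is to derive Corollary~\ref{cor:morse-ineq} directly from the algebraic Morse inequalities \eqref{eq:algebraic-morse-inequalities} applied to the chain complex $C_\bullet = C_\bullet(v)$ provided by \Cref{thm:conclusion}. All the real work has already been done: the chain complex exists, its homology computes $H_*(M)$, and its dimensions are exactly the claimed numbers $M_k$. So the proof is essentially a bookkeeping step that translates the two properties of \Cref{thm:conclusion} into the statement at hand.

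Concretely, I would proceed as follows. First, apply \Cref{thm:conclusion} to obtain a chain complex $C_\bullet$ with $\dim(C_k) = |\Fix_k(v)| + |\Orb_{k-1}(v)| + |\Orb_k(v)| = M_k$ and $H_k(C_\bullet) \cong H_k(M)$, so that $\dim(H_k(C_\bullet)) = \dim(H_k(M)) = R_k$. Next, note that $C_\bullet$ is a compact chain complex (it has finitely many nonzero finite-dimensional terms, since $M$ is a closed manifold and the filtration is finite), so it lies in $\Ch$ and the algebraic Morse inequalities \eqref{eq:algebraic-morse-inequalities} apply to it: for every $q \ge 0$,
\[
\sum_{i=0}^q (-1)^{q+i} \dim(C_i) \ge \sum_{i=0}^q (-1)^{q+i} \dim(H_i(C_\bullet)).
\]
Finally, substitute $\dim(C_i) = M_i$ on the left and $\dim(H_i(C_\bullet)) = R_i$ on the right to obtain the desired inequality.

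There is essentially no obstacle here; the statement is a formal consequence of \Cref{thm:conclusion} together with the purely linear-algebraic inequality \eqref{eq:algebraic-morse-inequalities}, which the text already notes follows from a standard induction on $q$. If one wanted to be thorough, the only point deserving a word is verifying that $C_\bullet$ genuinely satisfies the finiteness hypotheses needed to be an object of $\Ch$ — but this is immediate, since the unstable manifolds filtration has finite length $n$ by \Cref{prop:exist-number}, each page term $\check H_{p+q}(L_p, L_{p-1})$ is finite-dimensional, and $C_k = \bigoplus_{p+q=k} E^1_{p,q}$ is a finite direct sum that vanishes outside $0 \le k \le m$. One may optionally remark that taking $q = m$ (or the largest $q$ with $C_q \neq 0$) turns the inequality into an equality expressing the Euler characteristic, recovering the full strength of Smale's result, but this is not needed for the stated corollary.
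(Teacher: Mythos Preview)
Your proposal is correct and follows exactly the same approach as the paper: apply the algebraic Morse inequalities \eqref{eq:algebraic-morse-inequalities} to the chain complex $C_\bullet(v)$ furnished by \Cref{thm:conclusion}, identifying $\dim(C_i)=M_i$ and $\dim(H_i(C_\bullet))=R_i$. The paper's proof is simply the one-line chain of equalities and inequalities encoding this substitution.
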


\begin{proof}
    From \Cref{thm:conclusion} and \Cref{eq:algebraic-morse-inequalities} it follows that
    \[
    \sum_{i=0}^q(-1)^{q+i}M_i = \sum_{i=0}^q(-1)^{q+i}\dim(C_i(v)) \ge \sum_{i=0}^q(-1)^{q+i}\dim(H_i(C_\bullet(v))) =  \sum_{i=0}^q(-1)^{q+i}R_i. \qedhere
    \]
\end{proof}

\subsection{Examples}

By \Cref{prop:rel-cech-basis}, we get can endow the terms on the first page of $E$ with bases given by the fixed points and closed orbits of $v$. In particular, this implies many zero terms, as we show in the following proposition.

\begin{proposition}\label{prop:zero-terms}
    Let $\emptyset = L_{-1} \subseteq L_0 \subseteq L_1 \subseteq \cdots \subseteq L_n = M$ be the unstable manifolds filtration defined with respect to a Morse-Smale vector field $v$ as described in \Cref{sec:filtration-induced-by-vector-field}. Then, the following statements hold.
    \begin{enumerate}[(i)]
        \item $\check{H}_i(L_0,\emptyset)=0$ for $i\ge 2$.\label{item:zero-terms-1}
        \item $\check{H}_0(L_p,L_{p-1})=0$ for $p\ge 1$.\label{item:zero-terms-2}
        \item If $\dim(M)=2$, then $n=2$ and $\check{H}_i(L_1,L_0)=\check{H}_i(L_2,L_1)=0$ for $i\ge 3$.\label{item:zero-terms-3}
    \end{enumerate}
\end{proposition}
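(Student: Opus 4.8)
The plan is to take the three items in order: (i) and (ii) follow quickly from \Cref{prop:rel-cech-basis} once $L_0$ is identified explicitly, while (iii) needs a short structural argument about the filtration on a surface. For (i), observe that $L_0$ is by definition the union of the unstable manifolds $W^u(\beta)$ with $\partial W^u(\beta)=\emptyset$, i.e.\ those that are closed --- equivalently compact, as $M$ is compact --- subsets of $M$; by \Cref{lem:unstable-manifold-homeomorphic} an unstable manifold is homeomorphic to $\R^k$ or $\R^k\times S^1$, which is compact only when $k=0$, so $L_0$ is a finite disjoint union of points (the index-$0$ fixed points) and circles (the index-$0$ closed orbits), whence $\check{H}_i(L_0,\emptyset)=0$ for $i\ge 2$. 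For (ii), apply \Cref{prop:rel-cech-basis} with $r=0$: its three direct summands are indexed by $\Fix_0(v)$, by $\Orb_{-1}(v)=\emptyset$, and by $\Orb_0(v)$, and by the description of $L_0$ just given every index-$0$ fixed point and closed orbit already lies in $L_0\subseteq L_{p-1}$ for $p\ge 1$, so none lies in $L_p\setminus L_{p-1}$ and all three summands vanish.

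Now assume $\dim M=2$. The homology statement in (iii) is again immediate from \Cref{prop:rel-cech-basis}: for $i\ge 3$ the indexing sets $\Fix_i(v)$, $\Orb_{i-1}(v)$, $\Orb_i(v)$ are all empty, since on a surface a fixed point has index at most $2$ and a closed orbit index at most $1$; hence $\check{H}_i(L_p,L_{p-1})=0$ for every $p$, in particular for $p=1,2$. The remaining point is to show $L_2=M$ (so that one may take $n=2$).

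To prove $L_2=M$ I would check that every $W^u(\beta)$ lies in $L_2$, organizing the argument by $\dim W^u(\beta)\in\{0,1,2\}$. Unstable manifolds of dimension $0$ (attracting fixed points) and the closed $1$-dimensional ones (the attracting closed orbits, each $\cong S^1$) have empty boundary, so they lie in $L_0$. A non-closed $1$-dimensional unstable manifold belongs to a saddle $\beta$; then $\partial W^u(\beta)$ is contained in the union of the $\omega$-limit sets of its two unstable separatrices (and, by the inclination lemma, nothing more of the closure is added, since those limit sets are index-$0$ critical elements which equal their own unstable manifolds), and each such $\omega$-limit set is a single critical element that cannot be another saddle --- a saddle connection would make the two $1$-dimensional tangent lines $T_pW^u(\beta)$ and $T_pW^s(\beta')$ along a connecting orbit both equal to the flow line $\langle v(p)\rangle$, contradicting transversality --- nor an index-$2$ fixed point or index-$1$ closed orbit, whose stable manifold is respectively a single point or the orbit itself and so cannot contain a nonconstant separatrix of a saddle; hence it is an index-$0$ fixed point or closed orbit, so $\partial W^u(\beta)\subseteq L_0$ and $\beta\in L_1$. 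Finally, a $2$-dimensional $W^u(\beta)$ is an open subset of $M$, so no other disjoint $2$-dimensional unstable manifold can sit inside $\overline{W^u(\beta)}$; thus $\partial W^u(\beta)$ is a union of unstable manifolds of dimension $\le 1$, all already in $L_1$ by the previous step, whence $W^u(\beta)\subseteq L_2$. Combined with \Cref{prop:decomposition-unstable-manifolds} this gives $L_2=M$.

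The main obstacle is the structural step in (iii) asserting that an unstable separatrix of a saddle can only limit onto an index-$0$ critical element; this is precisely where the Morse--Smale transversality hypothesis enters essentially, through the absence of saddle connections and homoclinic loops on surfaces and through the fact that $\omega$-limit sets of flow lines of a Morse--Smale flow are single critical elements. Everything else reduces to bookkeeping with \Cref{prop:rel-cech-basis} and the elementary observation that an embedded open submanifold cannot meet the boundary of one disjoint from it.
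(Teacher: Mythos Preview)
Your argument is correct and follows the paper's approach almost verbatim for (i), (ii), and the homology-vanishing part of (iii): identify $L_0$ via \Cref{lem:unstable-manifold-homeomorphic}, then read everything else off \Cref{prop:rel-cech-basis}. The one substantive difference is that you actually \emph{prove} the claim $n=2$ for surfaces, whereas the paper's proof of (iii) only establishes the vanishing of $\check{H}_i(L_1,L_0)$ and $\check{H}_i(L_2,L_1)$ for $i\ge 3$ and leaves $n=2$ unargued. Your structural argument---index-$0$ elements land in $L_0$; saddles land in $L_1$ because transversality forbids saddle connections and the stable manifolds of sources and repelling orbits are too small to catch a separatrix; two-dimensional unstable manifolds are open, so their frontier has empty interior and can contain only lower-dimensional unstable manifolds already in $L_1$---is the right one and is exactly what is needed to justify the filtration length. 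The only implicit ingredient you lean on is that $\partial W^u(\beta)$ decomposes as a union of unstable manifolds of other critical elements (equivalently, the partial order used in the proof of \Cref{prop:exist-number}); you might cite that explicitly, but it is standard and already invoked in the paper.
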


\begin{proof}
    If $\beta$ is a fixed point or closed orbit of index zero, then $\partial W^u(\beta)=\emptyset$ and thus $W^u(\beta) \subseteq L_0$.
    By \Cref{lem:unstable-manifold-homeomorphic}, the only fixed points and closed orbits whose unstable manifolds are closed subsets of $M$ are those of index zero. Therefore, any fixed point or closed orbit of larger index has at least one other critical element in its boundary and thus cannot be contained in $L_0$. In other words, $L_0$ consists precisely of all fixed points and closed orbits of index 0. 
    So, it is a disjoint union of finitely many points and circles; in particular, it is triangulable. So, its \v{C}ech homology agrees with singular homology by \Cref{cor:cech-sing-iso} and is potentially non-zero only in degrees zero and one, which proves $(\ref{item:zero-terms-1})$. 

    Note that by \Cref{prop:rel-cech-basis}, $\check{H}_0(L_p,L_{p-1})$ is generated by fixed points of index zero and closed orbits of index zero that are present in $L_p$ but not in $L_{p-1}$. As we have just proved that none of them gets added later than $L_0$, $(\ref{item:zero-terms-2})$ follows.

    On a two-dimensional manifold, the index of a fixed point can be at most $2$, and the index of a closed orbit can be at most $1$, i.e. $\Fix_r(v)=\emptyset$ for $r\ge 3$, and $\Orb_r(v)=\emptyset$ for $r\ge 2$. Thus, it follows again from \Cref{prop:rel-cech-basis} that $\check{H}_i(L_1,L_0)=\check{H}_i(L_2,L_1)=0$ for $i\ge 3$. This proves $(\ref{item:zero-terms-3})$.
\end{proof}

As a consequence, for a Morse-Smale vector field on a surface the first page of the spectral sequence from \Cref{sec:filtration-induced-by-vector-field}, with $E^1_{p,q} = \check{H}_{p+q}(L_p,L_{p-1})$, looks as follows (where we do not write the zero terms):

\begin{center}
\begin{tabular}{c|c}
    \renewcommand{\arraystretch}{3}
    \begin{tabular}{c}
    $q=1$ \\
    $q=0$ \\
    $q=-1$
    \end{tabular}
    & 
    \begin{tikzcd}
        \check{H}_1(L_0,\emptyset)  &\check{H}_2(L_1,L_0) \ar[l] \\
        \check{H}_0(L_0,\emptyset)  &\check{H}_1(L_1,L_0) \ar[l]  &\check{H}_2(L_2,L_1) \ar[l] \\
        &&\check{H}_1(L_2,L_1)
    \end{tikzcd} \\ \hline
    &   
    \setlength{\tabcolsep}{27pt}
    \begin{tabular}{ccc}
        $p=0$ & $p=1$ & $p=2$
    \end{tabular}
\end{tabular}
\end{center}
The chain complex $C_\bullet(v) := C_\bullet(E(v))$ assigned to this spectral sequence according to \Cref{sec:chain-complex-of-spectral-sequence}, in the 2D case simplifies to
\begin{center}
    \begin{tikzcd}[ampersand replacement=\&,row sep = 2.4cm]
        C_2(v) = E^1_{1,1} \oplus \ker(d^1_{2,0}) \oplus E^1_{2,0}/\ker(d^1_{2,0}) \ar[d,"{\begin{bmatrix}
            0 &d^2_{2,0} &0 \\
            d^1_{1,1} &0 &0 \\
            0 &0 & \overline{d}^1_{2,0} \\
            0 &0 &0 
        \end{bmatrix}}"] \\
        C_1(v) = E^1_{0,1}/\im(d^1_{1,1}) \oplus \im(d^1_{1,1}) \oplus E^1_{1,0} \oplus E^1_{2,-1} \ar[d,"{\begin{bmatrix}
            0 &0 &0 &d^2_{2,-1} \\
            0 &0 &d^1_{1,0} &0 
        \end{bmatrix}}"] \\
        C_0(v) = E^1_{0,0}/\im(d^1_{1,0}) \oplus \im(d^1_{1,0}).
    \end{tikzcd}
\end{center}

We now give two examples of Morse-Smale vector fields $v$ on $S^2$ for which we compute the chain complex $C_\bullet(v)$. To simplify the expressions, we assume that $\Fi=\Z/2\Z$.

\begin{figure}[t]
    \centering
    \includegraphics[width=8cm]{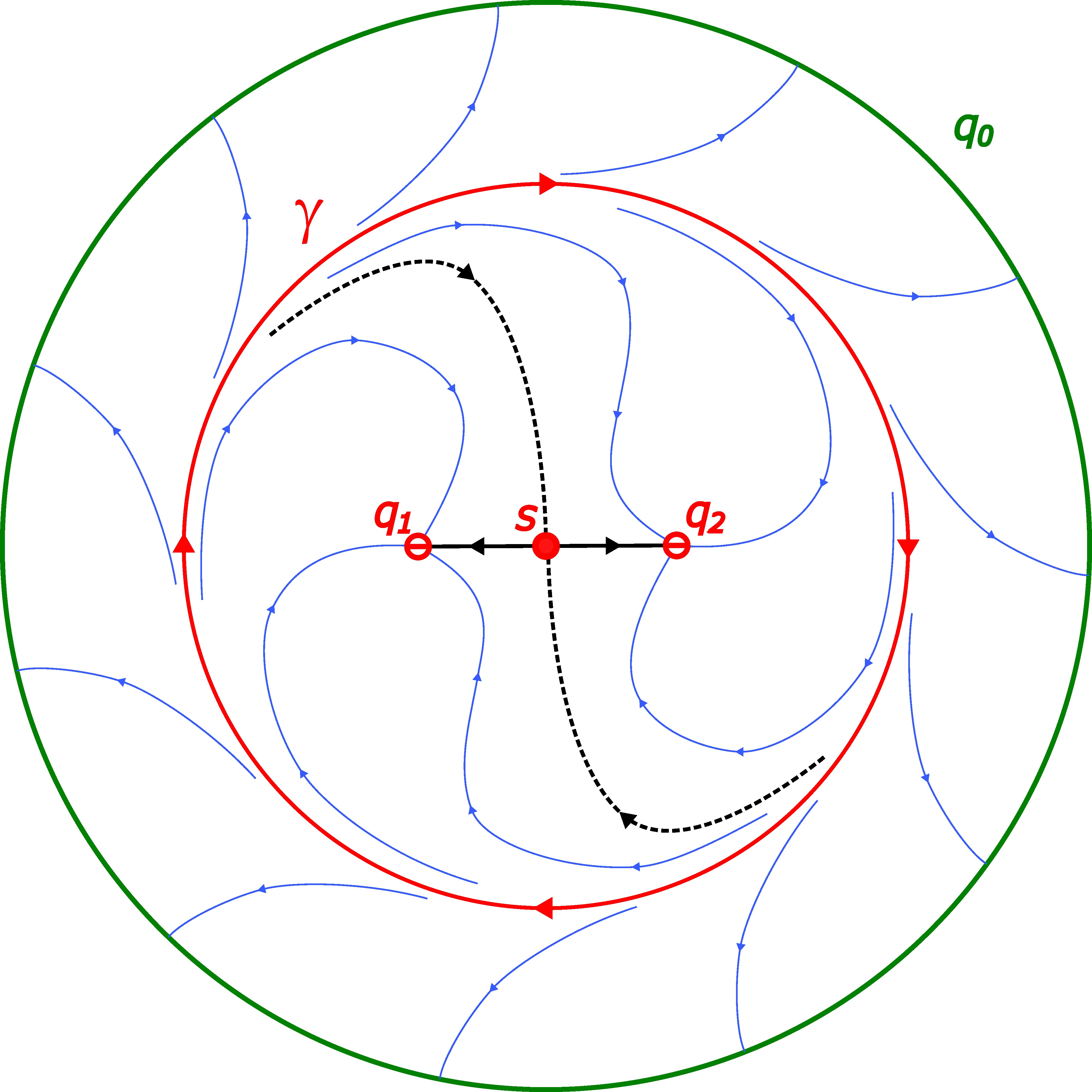}
    \caption{Morse-Smale vector field on $S^2$ with three sinks, one saddle, and one repelling orbit}
    \label{fig:vector-field-1}
\end{figure}

\begin{example}
    We draw the 2-sphere as a disk, where the boundary represents a single point $q_0$. Consider the vector field $v \in \X_{MS}(S^2)$ displayed in \Cref{fig:vector-field-1}. It has three sinks $q_0$, $q_1$, $q_2$, one saddle $s$ and one closed orbit $\gamma$ of index $1$. The filtration $\emptyset \subseteq L_0 \subseteq L_1 \subseteq L_2 = S^2$ is the following: $L_0 = \{q_0,q_1,q_2\}$, $L_1\setminus L_0 = W^u(s)$, $L_2\setminus L_1 = W^u(\gamma)$. In this example, the spaces $L_p$ are all triangulable, so their \v{C}ech homology agrees with singular homology. According to \Cref{prop:spectral-sequence-from-filtration}, there is a spectral sequence $E$ converging to $H_*(M)$ with the first page given by $E^1_{p,q} = H_{p+q}(L_p,L_{p-1})$. We endow these terms with canonical bases according to \Cref{prop:rel-cech-basis}. The first page of the spectral sequence thus looks as follows:
    \begin{center}
    \begin{tabular}{c|c}
    \renewcommand{\arraystretch}{4.5}
    \begin{tabular}{c}
    $q=1$ \\
    $q=0$ \\
    $q=-1$\vspace{10pt}
    \end{tabular}
    & 
    \begin{tikzcd}
        \underbrace{H_1(L_0,\emptyset)}_{=0}  &\underbrace{H_2(L_1,L_0)}_{=0} \ar[l] \\
        \underbrace{H_0(L_0,\emptyset)}_{= \Fi\langle q_0 \rangle \oplus \Fi\langle q_1 \rangle \oplus \Fi\langle q_2\rangle}  &\underbrace{H_1(L_1,L_0)}_{= \Fi\langle s \rangle} \ar[l,"d^1_{1,0}"{above}]  &\underbrace{H_2(L_2,L_1)}_{= \Fi\langle \gamma^+ \rangle} \ar[l,"d^1_{2,0}"{above}] \\
        &&\underbrace{H_1(L_2,L_1)}_{= \Fi\langle \gamma^- \rangle},
    \end{tikzcd} \\ \hline
    &   
    \setlength{\tabcolsep}{33pt}
    \begin{tabular}{ccc}
        \hspace{-2pt}$p=0$ & $p=1$ & $p=2$
    \end{tabular}
\end{tabular}
\end{center}
    where the differentials with respect to these bases are given by $d^1_{1,0} = \begin{bmatrix} 0\\ 1\\ 1 \end{bmatrix}$ and $d^1_{2,0} = \begin{bmatrix} 0 \end{bmatrix}$.
    The second page looks as follows:
\begin{center}
\begin{tabular}{c|c}
    \renewcommand{\arraystretch}{3.2}
    \begin{tabular}{c}
    $q=1$ \\
    $q=0$ \\
    $q=-1$\vspace{0pt}
    \end{tabular}
    &
    \begin{tikzcd}
        \hspace{20pt}0\hspace{20pt}  &0 \\
        E^2_{0,0} = E^1_{0,0}/\im(d^1_{1,0})  &\overbrace{\ker(d^1_{1,0})}^{= 0}  &E^2_{2,0}=E^1_{2,0} = \Fi\langle \gamma^+\rangle \\
        &&E^2_{2,-1}=E^1_{2,-1} = \Fi\langle \gamma^-\rangle. \ar[ull,"d^2_{2,-1}"]
    \end{tikzcd} \\ \hline
    &   
    \setlength{\tabcolsep}{35pt}
    \begin{tabular}{ccc}
        \hspace{-2pt}$p=0$ & $p=1$ & $p=2$
    \end{tabular}
\end{tabular}
\end{center}
    Note that $E^2_{0,0} \cong \langle q_0,q_1,q_2 \mid q_1=q_2\rangle$. We assign to $E^2_{0,0}$ the basis $\{[q_0],[q_1]\}$. The resulting matrix for $d^2_{2,-1}$ is $\begin{bmatrix} 1\\ 1 \end{bmatrix}$. To $\im(d^1_{1,0})$ we assign the basis $\{q_1+q_2\}$. In conclusion, this yields the following based chain complex:
    \begin{center}
    \begin{tikzcd}[ampersand replacement=\&,row sep = 2.0cm]
        C_2(v) = \ker(d^1_{2,0}) = \langle \gamma^+\rangle \ar[d,"{\begin{bmatrix}
            0 \\
            0
        \end{bmatrix}}"] \\
        C_1(v) = E^1_{1,0} \oplus E^1_{2,-1} = \langle s,\gamma^-\rangle \ar[d,"{\begin{bmatrix}
            0 &d^2_{2,-1} \\
            d^1_{1,0} &0 
        \end{bmatrix}=\begin{bmatrix}
            0 &1 \\
            0 &1 \\
            1 &0  
        \end{bmatrix}}"] \\
        C_0(v) = E^1_{0,0}/\im(d^1_{1,0}) \oplus \im(d^1_{1,0}) = \langle [q_0],[q_1],q_1+q_2\rangle.
    \end{tikzcd}
    \end{center}
\end{example}

\begin{example}
    Consider the vector field $v \in \X_{MS}(S^2)$ displayed in \Cref{fig:vector-field-2}. It has four sinks $q_0$, $q_1$, $q_2$, $q_3$, four saddles $s_1$, $s_2$, $s_3$, $s_4$, two sources $p_1$, $p_2$, and one closed orbit $\gamma$ of index $1$. The filtration $\emptyset = \subseteq L_0 \subseteq L_1 \subseteq L_2 = S^2$ is the following: 
    \begin{align*}
        L_0 &= \{q_0,q_1,q_2,q_3\}, &L_1\setminus L_0 &= W^u(s_1) \sqcup W^u(s_2) \sqcup W^u(s_3) \sqcup W^u(s_4),\\
        &&L_2\setminus L_1 &= W^u(p_1) \sqcup W^u(p_2) \sqcup W^u(\gamma).
    \end{align*}
    As in the previous example, these are all triangulable, so we may consider singular homology instead of \v{C}ech homology. Again by \Cref{prop:spectral-sequence-from-filtration,prop:rel-cech-basis} we get a spectral sequence $E(v)$ with the first page looking as follows:
    \begin{center}
    \begin{tabular}{c|c}
    \renewcommand{\arraystretch}{4.2}
    \begin{tabular}{c}
    $q=1$ \\
    $q=0$ \\
    $q=-1$\vspace{10pt}
    \end{tabular}
    & 
    \begin{tikzcd}
        \underbrace{H_1(L_0,\emptyset)}_{=0}  &\underbrace{H_2(L_1,L_0)}_{=0} \ar[l] \\
        \underbrace{H_0(L_0,\emptyset)}_{= \bigoplus_{i=0}^3 \Fi\langle q_i \rangle}  &\underbrace{H_1(L_1,L_0)}_{= \bigoplus_{i=1}^4 \Fi\langle s_i \rangle} \ar[l,"d^1_{1,0}"{above}]  &\underbrace{H_2(L_2,L_1)}_{= \Fi\langle p_1 \rangle \oplus \Fi\langle p_2 \rangle \oplus \Fi\langle \gamma^+ \rangle} \ar[l,"d^1_{2,0}"{above}] \\
        &&\underbrace{H_1(L_2,L_1)}_{= \Fi\langle \gamma^- \rangle}.
    \end{tikzcd} \\ \hline
    &   
\setlength{\tabcolsep}{30pt}
    \begin{tabular}{ccc}
        \hspace{-25pt}$p=0$ & $p=1$ & $p=2$
    \end{tabular}
\end{tabular}
\end{center}
    The differentials with respect to these bases are given by 
    \[
    d^1_{1,0} = 
    \begin{bmatrix} 
        0&0&0&0 \\ 
        1&0&1&0 \\ 
        1&1&0&1 \\
        0&1&1&1
    \end{bmatrix} 
    \qquad \text{and} \qquad 
    d^1_{2,0} = 
    \begin{bmatrix} 
        1&0&1 \\
        0&1&1 \\
        1&0&1 \\
        1&1&0
    \end{bmatrix}.
    \]
    In order to explicitly write down the resulting chain complex, we need to determine bases for the subspaces $\im(d^1_{1,0}) \subseteq E^1_{0,0}$, $\ker(d^1_{2,0}) \subseteq E^1_{2,0}$ and for the quotient spaces $E^1_{0,0} / \im(d^1_{1,0})$, $E^1_{2,0} / \ker(d^1_{2,0})$. 

\begin{figure}[t]
    \centering
    \includegraphics[width=8cm]{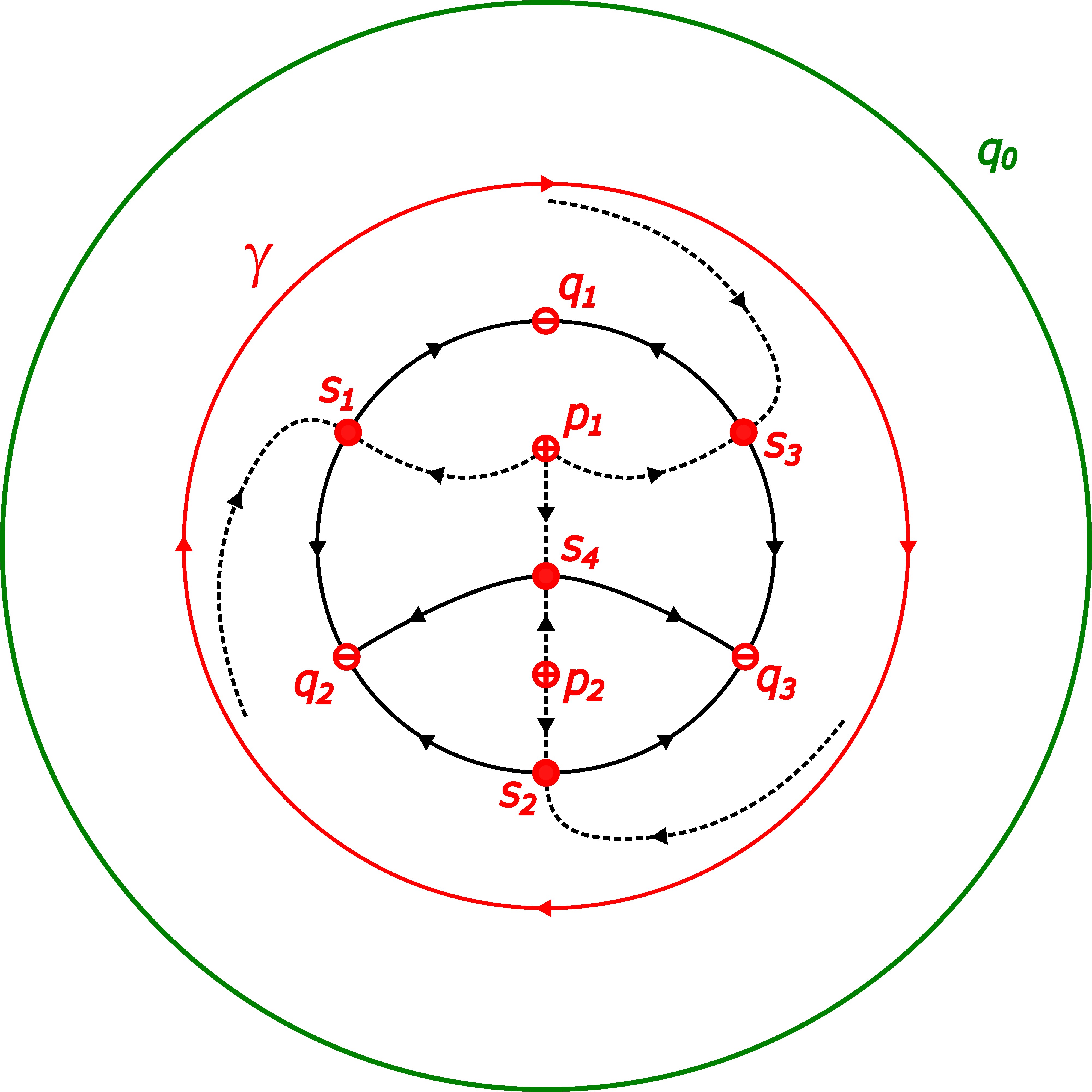}
    \caption{Morse-Smale vector field on $S^2$ with four sinks, four saddles, two sources, and one repelling orbit}
    \label{fig:vector-field-2}
\end{figure}

    \paragraph{Basis for subspaces:} We make use of the reduced row echelon form to endow subspaces of a given vector space with canonical bases. A more detailed description of the row echelon form and algorithms for computing it can be found in many linear algebra textbooks, see e.g. \cite{LayLinAlgAppl2014}. We take an arbitrary generating set, write their representation vectors as rows into a matrix, and bring the matrix to reduced echelon form. The non-zero rows of the resulting matrix will be the basis. In the case of $\im(d^1_{1,0}) \subseteq E^1_{0,0}$, we can start with the columns of $d^1_{1,0}$. This yields the following row echelon form:
    \[
    \begin{bmatrix}
        0 &1 &1 &0 \\
        0 &0 &1 &1 \\
        0 &1 &0 &1
    \end{bmatrix}
    \rightsquigarrow
    \begin{bmatrix}
        0 &1 &1 &0 \\
        0 &0 &1 &1 \\
        0 &0 &1 &1
    \end{bmatrix}
    \rightsquigarrow
    \begin{bmatrix}
        0 &1 &1 &0 \\
        0 &0 &1 &1 \\
        0 &0 &0 &0
    \end{bmatrix}
    \rightsquigarrow
    \begin{bmatrix}
        0 &1 &0 &1 \\
        0 &0 &1 &1 \\
        0 &0 &0 &0
    \end{bmatrix}.
    \]
    The resulting basis for $\im(d^1_{1,0})$ is thus $\{q_1+q_3,q_2+q_3\}$.
    For $\ker(d^1_{2,0}) \subseteq E^1_{2,0}$, a basis is given by $\{p_1+p_2+\gamma^+\}$. In this case, there is nothing more to do, since the basis consists only of one element.

    \paragraph{Basis for quotient spaces:} We choose the basis $\{[q_0],[q_1]\}$ for $E^2_{0,0} = E^1_{0,0} / \im(d^1_{1,0})$ and the basis $\{[p_1],[p_2]\}$ for $E^2_{2,0} = E^1_{2,0} / \ker(d^1_{2,0})$. The other terms of the second page are either zero or agree with the first page. We can thus write the second page:
    \begin{center}
    \begin{tabular}{c|c}
    \renewcommand{\arraystretch}{3.5}
    \begin{tabular}{c}
    $q=1$ \\
    $q=0$ \\
    $q=-1$\vspace{0pt}
    \end{tabular}
    & 
    \begin{tikzcd}
        \hspace{20pt}0\hspace{20pt}  &0 \\
        E^2_{0,0} = \Fi\langle [q_0],[q_1] \rangle  &\overbrace{\ker(d^1_{1,0})}^{= 0}  &E^2_{2,0}= \Fi\langle p_1+p_2+\gamma^+\rangle \\
        &&E^2_{2,-1}=E^1_{2,-1} = \Fi\langle \gamma^-\rangle. \ar[ull,"d^2_{2,-1}"]
    \end{tikzcd} \\ \hline
    &   
\setlength{\tabcolsep}{30pt}
    \begin{tabular}{ccc}
        \hspace{-25pt}$p=0$ & $p=1$ & $p=2$
    \end{tabular}
\end{tabular}
\end{center}
    The matrix representation of the non-zero differential with respect to these bases is 
    $d^2_{2,-1}~=~\begin{bmatrix}
        1\\
        1
    \end{bmatrix}$.
    Hence, we get the following chain complex:
    \begin{center}
    \begin{tikzcd}[ampersand replacement=\&,row sep = 2.8cm]
        C_2(v) = \ker(d^1_{2,0}) \oplus E^1_{2,0}/\ker(d^1_{2,0}) = \Fi\langle p_1+p_2+\gamma^+ \rangle \oplus \Fi\langle [p_1] \rangle \oplus \Fi\langle [p_2]\rangle \ar[d,"{\begin{bmatrix}
            0 &\overline{d}^1_{2,0} \\
            0 &0 
        \end{bmatrix}=\begin{bmatrix}
            0 &1 &0 \\
            0 &0 &1 \\
            0 &1 &0 \\
            0 &1 &1 \\
            0 &0 &0
        \end{bmatrix}}"] \\
        C_1(v) = E^1_{1,0} \oplus E^1_{2,-1} = \Fi\langle s_1 \rangle \oplus \Fi\langle s_2 \rangle \oplus \Fi\langle s_3 \rangle \oplus \Fi\langle s_4 \rangle \oplus \Fi\langle \gamma^-\rangle \ar[d,"{\begin{bmatrix}
            0 &d^2_{2,-1} \\
            d^1_{1,0} &0 
        \end{bmatrix}=\begin{bmatrix}
            0 &0 &0 &0 &1 \\
            0 &0 &0 &0 &1 \\
            1 &0 &1 &0 &0 \\
            1 &1 &0 &1 &0
        \end{bmatrix}}"] \\
        C_0(v) = E^1_{0,0}/\im(d^1_{1,0}) \oplus \im(d^1_{1,0}) = \Fi\langle [q_0] \rangle \oplus \Fi\langle [q_1] \rangle \oplus \Fi\langle q_1+q_3 \rangle \oplus \Fi\langle q_2+q_3\rangle.
    \end{tikzcd}
    \end{center}
\end{example}

\bibliographystyle{amsplain}
\bibliography{Bibliography}

\end{document}